\crefname{defn}{definition}{definitions}
\Crefname{def}{Definition}{Definitions}
\crefname{thm}{theorem}{theorems}
\Crefname{thm}{Theorem}{Theorems}
\definecolor{lightpink}{RGB}{250, 107, 198}
\definecolor{darkpurple}{RGB}{128, 0, 128}
\definecolor{mypink}{RGB}{250, 127, 209}
\definecolor{myblue}{RGB}{82, 124, 239}
\definecolor{mygreen}{RGB}{150, 240, 130}
\newcommand{\N}{{\mathbb N}}
\newcommand{\Q}{{\mathcal Q}}
\renewcommand{\L}{{\mathcal L}}
\newcommand{\ba}{{\mathbf a}}
\newcommand{\bb}{{\mathbf b}}
\newcommand{\bx}{{\mathbf x}}
\newcommand{\M}{{\mathcal M}}
\renewcommand{\P}{{\mathcal P}}
\newcommand{\cP}{{\mathcal P}}
\def\lex{\operatorname{lex}}
\newcommand{\dSdw}{\Delta}
\newcommand{\uSdw}{\rotatebox[origin=c]{180}{$\Delta$}}
\newcommand{\di}{\diamond}
\def\Seg{\operatorname{Seg}}
\newcommand{\uwidehat}[1]{%
	\mathpalette\douwidehat{#1}%
}
\newcommand{\douwidehat}[2]{%
	\sbox0{$\m@th#1\widehat{\hphantom{#2}}$}%
	\sbox2{$\m@th#1x$}
	\sbox4{$\m@th#1#2$}
	\dimen0=\ht0
	\advance\dimen0 -.8\ht2
	\dimen2=\dp4
	\rlap{%
		\raisebox{\dimexpr\dimen0-\dimen2}{%
			\scalebox{1}[-1]{\box0}%
		}%
	}%
	{#2}%
}
\theoremstyle{plain} 
\newtheorem{thm}{Theorem}[section]
\newtheorem*{introthm*}{Theorem}
\newtheorem{cor}[thm]{Corollary}
\newtheorem{lem}[thm]{Lemma}
\newtheorem{prop}[thm]{Proposition}
\newtheorem*{oprobl*}{Open Problem}
\newtheorem{introprop}{Proposition}
\newtheorem{introthm}[introprop]{Theorem}
\newtheorem{conj}[thm]{Conjecture}
\theoremstyle{definition}
\newtheorem{defn}[thm]{Definition}
\newtheorem{ex}[thm]{Example}
\newtheorem{claim}[thm]{Claim}
\theoremstyle{remark}
\newtheorem{rem}[equation]{Remark}
\newtheorem{notation}[thm]{Notation}
\numberwithin{equation}{section}  
\DeclareMathOperator{\dia}{\raisebox{-2pt}{\scalebox{1.5}{$\Diamond$}}}
\DeclareMathOperator*{\bigdia}{\raisebox{-4pt}{\scalebox{2}{$\Diamond$}}}
\pgfplotsset{compat=1.17}
\title{Constructions of Macaulay Posets and Macaulay Rings}
\author[]{Penelope Beall}
\address{University of California--Davis, 1 Shields Avenue, Davis, CA 95616, United States}
\email{pbeall@ucdavis.edu}
\author[]{Erenay Boyali}
\address{Istanbul Bilgi University, Piyalepasa, Cemiyet, 15 Beyoglu/Istanbul, Turkey}
\email{erenay.boyali@hotmail.com}
\author[]{Nancy Chen}
\address{Cornell University, 616 Thurston Ave, Ithaca, NY 14853, United States}
\email{nc494@cornell.edu}
\author[]{Ellen Chlachidze}
\address{University of Southern California, 700 Childs Way, Los Angeles, CA 90089-0911, United States}
\email{chlachid@usc.edu}
\author[]{Toan Trong Dao}
\address{Faculty of Mathematics and Computer Science, University of Science, Vietnam National University, Ho Chi Minh City, Vietnam}
\email{daotrongtoan.dtt4@gmail.com}
\author[]{Frederic Garvey}
\address{Truman State University, 100 E Normal Ave, Kirksville, MO 63501 United States}
\email{fgarvey125@gmail.com}
\author[]{Mitchell Johnson}
\address{Hamilton College, 198 College Hill Rd, Clinton, NY 13323 United States}
\email{mxjohnso@hamilton.edu}
\author[]{Yu Olivier Li}
\address{University of St Andrews, Mathematical Institute, St Andrews KY16 9SS, United Kingdom}
\email{yol1@st-andrews.ac.uk}
\author[]{Nikola Kuzmanovski}
\address{University of Notre Dame, 138 Hayes-Healy, Notre Dame, IN, 4618, United States}
\email{nkuzmano@nd.edu}
\author[]{Kelvin Ma}
\address{Iowa State University, Ames, IA 50011, United States}
\email{kelvinma78@gmail.com}
\author[]{Treanungkur Mal}
\address{
Indian Statistical Institute, 8th Mile, Mysore Road, RVCE Post, Bangalore 560059, India}
\email{maltreanungkur@gmail.com}
\author[]{Rukshan Marasinghe Mudiyanselage}
\address{
Indiana University Bloomington, 107 S Indiana Avenue, Bloomington, IN, 47405, United States}
\email{rmarasin@iu.edu}
\author[]{Quinlan Mayo}
\address{
University of Massachusetts, Amherst, MA, 01002, United States}
\email{qmayo@umass.edu}
\author[]{Nava Minsky-Primus}
\address{Yale University, New Haven, CT 06520, United States}
\email{navaminskyprimus@gmail.com}
\author[]{Alexandra Seceleanu}
\address{University of Nebraska--Lincoln, 203 Avery Hall, Lincoln, NE 68588, United States}
\email{aseceleanu@unl.edu}
\author[]{Sriram Veerapaneni}
\address{Indian Institute of Technology Kanpur, Kalyanpur, Kanpur, Uttar Pradesh 208016, India}
\email{sriramcv22@iitk.ac.in}
\begin{document}

\thanks{2020 MSC: primary	05E40, 06A07,  secondary	06A11, 	05D05,  13F55}

\thanks{Keywords: partially ordered set, Macaulay poset, Macaulay ring, wedge product, diamond product, fiber product.}

\begin{abstract}
    A poset is Macaulay if its partial order and an additional total order interact well. Analogously, a ring is Macaulay if the partial order defined on its monomials by division interacts nicely with any total monomial order.  We investigate methods of obtaining new structures through combining Macaulay rings and posets by means of certain operations inspired by topology. We  examine whether these new structures retain the Macaulay property, identifying new classes of posets and rings for which the operations preserve the Macaulay property. 
\end{abstract}

\maketitle
\setcounter{tocdepth}{1}
\tableofcontents

\vspace{-2em}
\section{Introduction}

A poset is Macaulay if there exists an additional total order such that initial segments on rank \(n\) have a minimum upper shadow which is itself an initial segment at rank \(n + 1\).  This notion is rigorously formulated in \Cref{def: Macaulay}.
Named after the mathematician Francis S.~Macaulay, these posets generalize a property satisfied by  the monomials in a polynomial ring, partially ordered by divisibility, to a purely combinatorial setting. A Macaulay poset is characterized by a ranked structure and a total order that interacts with the partial order in a favorable way, allowing to establish bounds on the sizes of subsets of a given rank in an order ideal of the poset. 

Macaulay posets serve as a powerful tool in understanding the interplay between algebraic invariants of graded rings, such as Hilbert functions, and their combinatorial structure. Indeed, their seminal application devised by Macaulay in \cite{Mac} was to determine all Hilbert function of homogeneous ideals of the polynomial ring. Nowadays,  applications of Macaulay posets span both commutative algebra  and enumerative combinatorics, making them a rich subject of study in modern mathematics. In extremal combinatorics, Macaulay posets are featured in the study of isoperimetric problems with celebrated results due to Katona \cite{Katona}, Kruskal \cite{Kruskal}, and Clements--Lindstr\"om \cite{CL}.  In commutative algebra there is an emerging theory of Macaulay rings, that is, rings whose monomial poset is a Macaulay poset; see \cite{Mermin, MP, MP2, Kuz}. Surveys on Macaulay posets can be found in \cite{BL} as well as in \cite[Chapter 8]{Engel}. Important examples of Macaulay posets include the poset of monomials in a polynomial ring in any number of variables with the lexicographic order, as shown by Macaulay \cite{Mac}; any product of chains with respect to an appropriate  lexicographic order, as shown by Clements--Lindstr\"om \cite{CL} -- we refer to these as Clements-Lindstr\"om posets throughout -- and any product of isomorphic spider posets with legs of equal lengths, as shown by Bezrukov--Els\"asser \cite{BE}.

In this paper we consider how the Macaulay property interacts with poset operations. Certain instances of this question have been studied before, primarily for  Cartesian products of posets. Iterated Cartesian products of a poset with itself have been studied with respect to the lexicographic total order in \cite{uwsuper}. Special cases of the ring-theoretic analogue construction, which entails taking a tensor product of  two algebras over a common subfield, can be found in \cite[Theorem 4.1]{MP}. See also \cite[Theorem 10]{BL}  for related investigations.

The novel aspect of this paper is considering poset operations that are inspired by  topological  constructions. Specifically, in topology, the fiber product of topological spaces is a construction that combines two spaces over a common base space  in a way that respects certain maps between them and the connected sum of two topological spaces is a new space formed by removing an open neighborhood  from each space and gluing the resulting boundaries together. 
Inspired by these considerations, we introduce and study poset analogues  which we term wedge product (\Cref{def: wedge}), diamond product (\Cref{def: diamond}), and fiber product (\Cref{def: fiber}), respectively.
The fiber product of two posets combines two posets which contain copies of the same sub-poset by taking the union of the posets with the two copies of the subposet identified. The special case of the fiber product where unique minimal elements of two posets are identified is termed wedge product.  When unique minimal elements and unique maximal elements of two posets are respectively identified the cnstruction is termed diamond product. This is a special case of a more general connected sum operation, described in \cite{BKLS}, which we shall not make use of in this paper.

Ring-theoretic constructions which match both these topological and poset operations are available in commutative algebra. The fiber product is the pullback  in the category of rings, defined in \eqref{eq: FP}. The connected sum of rings is an operation defined in \cite{AAM}. When  these operations act upon the cohomology rings of topological spaces $H^\bullet(X)$ and $H^\bullet(Y)$, the fiber product of these rings recovers the cohomology algebra of the fiber product of the topological spaces, that is $H^\bullet(X)\times_K H^\bullet(Y)=H^\bullet(X\times Y)$. Likewise the connected sum of the rings gives the cohomology algebra of the topological connected sum. These analogies provide the names for the ring theoretic constructions.
Our  insight is that if we consider monomial posets $\M_A$, $\M_B$ of rings $A, B$ instead of topological spaces, analogous relationships hold, for example  $\M_{A\times_K B}=\M_A\times_K \M_B$ (see \Cref{prop: FP}).  Since the monomial posets of graded rings are closed under these new poset operations, they are well-suited to study the Macaulay property for  rings. This  feature is absent from other poset operations such as  the disjoint union.

Our main objective is to determine under which conditions the Macaulay property is preserved under the operations we introduce on posets, deducing  consequences on the Macaulay property for some new classes of commutative rings. This is a fruitful endeavor as  few  classes of commutative rings are known to be Macaulay beyond the classical setting of Clements--Lindstr\"om rings; this includes colored square-free rings \cite{MerminMurai}, Chong's rings \cite{Chong}, certain rings of embedding dimension two \cite{Abdelfatah}, and the rings discussed by Kuzmanovski in \cite{Kuz}.

From their respective constructions, it is apparent that the wedge and diamond product are most akin to the disjoint union of posets.  Thus in studying these two operations we first relate them to disjoint unions so that we may utilize results of Clements \cite{Clements} on the Macaulay property for disjoint unions of copies of a Macaulay poset endowed with an additional property called additivity. Our first  result is the following.

\begin{introthm}[\Cref{prop:equiv union wedge diamond}, \Cref{thm: equiv union wedge diamond}]\label{introthmA}
Let $\P_1,\ldots, \P_n$ be posets of the same rank having unique minimal and maximal elements. Then the disjoint union being Macaulay implies the wedge product is Macaulay which implies the diamond product is Macaulay. In symbols,
\[
\bigsqcup_{i=1}^n\P_i \text{ is Macaulay} \Rightarrow \bigvee_{i=1}^n\P_i \text{ is Macaulay} \Rightarrow \bigdia_{i=1}^n\P_i \text{ is Macaulay}.
\]
Moreover if all $\P_i$ are isomomorphic to the same Macaulay poset $\P$ satisfying a mild condition, then any of the three posets above are Macaulay for $n\geq 2$ if and only if $\P$ is additive.
\end{introthm}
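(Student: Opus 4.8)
The plan is to treat the two halves of the statement—the chain of implications and the additivity characterization—separately, reducing the latter to Clements' results \cite{Clements} on disjoint unions of copies of a fixed Macaulay poset. For the chain $\bigsqcup_{i=1}^n\P_i \text{ Macaulay} \Rightarrow \bigvee_{i=1}^n\P_i \text{ Macaulay} \Rightarrow \bigdia_{i=1}^n\P_i \text{ Macaulay}$, the decisive point is structural: since each $\P_i$ has a unique minimal and a unique maximal element, the posets $\bigsqcup_{i=1}^n\P_i$, $\bigvee_{i=1}^n\P_i$ and $\bigdia_{i=1}^n\P_i$ have identical levels and identical covering relations in ranks $1,\dots,r-1$, where $r$ is the common rank; $\bigvee$ merely identifies the $n$ minimal elements into one, and $\bigdia$ additionally identifies the $n$ maximal elements. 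In a ranked poset with a unique bottom the down-shadow of every subset of rank $1$ is all of rank $0$ (and dually at the top), so the Macaulay condition is vacuous at the two extreme covering steps. Hence a total order witnessing that $\bigsqcup_{i=1}^n\P_i$ is Macaulay collapses—by deleting all but one minimal element—to a total order witnessing that $\bigvee_{i=1}^n\P_i$ is Macaulay, and the analogous collapse handles the passage from $\bigvee_{i=1}^n\P_i$ to $\bigdia_{i=1}^n\P_i$. This is the content of \Cref{prop:equiv union wedge diamond} and \Cref{thm: equiv union wedge diamond}, which I would establish along these lines and then invoke.

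Now suppose all $\P_i$ are isomorphic to a Macaulay poset $\P$ satisfying the mild hypothesis. For the ``if'' direction, if $\P$ is additive then Clements' theorem \cite{Clements} gives that $\bigsqcup_{i=1}^n\P$ is Macaulay, and the chain of implications promotes this to $\bigvee_{i=1}^n\P$ and $\bigdia_{i=1}^n\P$; so all three products are Macaulay for every $n\ge 2$.

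For the ``only if'' direction, the chain lets one start from the weakest hypothesis—that $\bigdia_{i=1}^n\P$ is Macaulay for some $n\ge 2$—and conclude that $\P$ is additive, which I would do by contraposition. A failure of additivity of $\P$ is witnessed by a level $k$ and two cardinalities for which the ``obvious'' superposition, in two copies of $\P$, of initial segments in the Macaulay order of $\P$ is not shadow-minimal among order-ideal slices of the combined size. Realizing that witness inside two arms of $\bigdia_{i=1}^n\P$, leaving the remaining arms empty, produces a genuine order ideal—genuine precisely because the extreme covering steps are unconstrained—whose level-$k$ slice, once compressed to an initial segment in the Macaulay order of $\bigdia_{i=1}^n\P$, has strictly larger down-shadow than some order-ideal slice of equal size, contradicting the Macaulay property. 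Equivalently: restricting any Macaulay order of $\bigdia_{i=1}^n\P$ to ranks $1,\dots,r-1$ and reinstating the split extremal elements yields a Macaulay order of $\bigsqcup_{i=1}^n\P$, so $\bigdia$ Macaulay forces $\bigsqcup$ Macaulay and hence $\P$ additive. Combined with the ``if'' direction and the chain, this closes the cycle: additive $\Rightarrow \bigsqcup$ Macaulay $\Rightarrow \bigvee$ Macaulay $\Rightarrow \bigdia$ Macaulay $\Rightarrow$ additive, giving the asserted equivalence for each of the three products.

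The main obstacle is this last reduction: one must show that any total order certifying Macaulayness of the diamond (or wedge) product interacts with the individual arms tightly enough to reveal the additivity of $\P$, even though a priori the optimal order on the combined poset need not restrict to the standard Macaulay order on each copy of $\P$. The cleanest framework for making this rigorous is profile vectors—the level-cardinality sequences of order ideals—together with the cascade / initial-segment description of Macaulay orders; one also needs the mild hypothesis on $\P$ to exclude the degenerate small-rank situations in which the diamond product is Macaulay irrespective of additivity, which would otherwise break the equivalence. This is careful bookkeeping rather than a single hard idea.
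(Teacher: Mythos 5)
Your treatment of the implication chain $\bigsqcup_{i=1}^n\P_i$ Macaulay $\Rightarrow\bigvee_{i=1}^n\P_i$ Macaulay $\Rightarrow\bigdia_{i=1}^n\P_i$ Macaulay is essentially the paper's argument: collapsing the bottom (resp.\ top) level changes nothing in the intermediate levels and covering relations, and the Macaulay conditions at a collapsed level become trivial because that level is a singleton, so the restricted total order still works. (The paper packages this via the operations $\widehat{\P}$, $\overline{\P}$, $\underline{\P}$, \Cref{lem: hat overline} and \Cref{prop: hat preserves Macaulay}, but the substance is the same; note only that Macaulayness here is defined through \emph{upper} shadows, so the relevant triviality at the bottom is that rank $0$ of the wedge is a singleton, not a statement about down-shadows of rank $1$.) The ``if'' half of the additivity claim --- additive $\Rightarrow$ disjoint union Macaulay by Clements, then push along the chain --- also matches \Cref{thm: equiv union wedge diamond}.

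The gap is in your ``only if'' direction. The assertion that \emph{any} Macaulay order on $\bigdia_{i=1}^n\P$, restricted to the intermediate ranks and extended by reinstating the split extremal elements, yields a Macaulay order on $\bigsqcup_{i=1}^n\P$ is not true. In the diamond every nonempty subset of the penultimate rank has upper shadow equal to the singleton top, so a Macaulay order for the diamond imposes no constraint on how the copies interleave inside that rank; in the disjoint union, by contrast, the initial segment of each size $q\le\lvert\P_{[r-1]}\rvert$ in that rank must lie entirely inside one copy, since otherwise its upper shadow has size $2$ while size $1$ is achievable. Reinstating the top therefore requires a genuine argument and control over the order: this is precisely the $(4)\Rightarrow(2)$ step of \Cref{thm: equiv union wedge diamond}, which uses that the order is union simplicial plus a pigeonhole count at the top level. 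Your fallback witness argument has the same dependency --- a non-additivity witness is phrased in $\P$'s own Macaulay order, and the converse half of \Cref{prop: disjoint union of mac posets} needs initial segments of the union to decompose as (an initial segment of one copy) $\cup$ (all of the other copy), which only the union simplicial order guarantees. This is not mere bookkeeping: the paper's example of $\P$ equal to the wedge of a length-$1$ and a length-$2$ path, for which $\P\vee\P$ is Macaulay while $\P\sqcup\P$ is not, shows that Macaulayness with respect to an arbitrary order cannot be promoted back up the chain, so the equivalence with additivity must be proved, as the paper does, relative to the union simplicial order specifically.
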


Surprisingly, we find that the situation is very different when taking wedge and diamond products of posets that are additive, but not isomorphic, in that the resulting posets are rarely Macaulay. To illustrate this we focus our attention on operations between Clements--Lindstr\"om posets, which  are Macaulay by a celebrated theorem of Clements-Lindstr\"om \cite{CL} and satisfy the additivity property by \cite{Clements1}. Our second series of results characterizes when operations on Clements--Lindstr\"om posets result in Macaulay posets.

\begin{introthm}[\Cref{thm: classification diamond product of boxes},  \Cref{thm: wedge 2 boxes different size}, \Cref{prop: wedge box classification}  ]\label{introthm B}
Let $\P$ and $\mathcal{Q}$ be Clements--Lindstr\"om posets.
\begin{enumerate}
\item The diamond product of $\P$ and $\Q$ is Macaulay if and only if either the two posets are isomorphic or one of them is a path and the other is two-dimensional, with a side of length two.
\item If $\P$ and $\Q$ are two-dimensional of sizes $m\times n$ and $m'\times n'$ , respectively, such that $m \leq n$, $m' \leq n'$, and $m,m' > 1$, then the wedge product of $\P$ and  $\Q$ is Macaulay if and only if $m \leq m'$ and $n \leq n'$ or vice versa.
\end{enumerate}
\end{introthm}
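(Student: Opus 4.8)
\emph{The ``if'' directions.} I would treat each implication of each part separately. If $\P\cong\Q$, both are box posets, hence Macaulay by Clements--Lindstr\"om \cite{CL} and additive by \cite{Clements1}; a box poset has a unique minimum and maximum and admits the Clements--Lindstr\"om order as a compatible total order, so the mild hypothesis of \Cref{introthmA} is satisfied and \Cref{introthmA} gives that $\P\vee\P$, and hence $\P\dia\P$, are Macaulay, settling the wedge product of part (2) in the isomorphic case as well. The remaining ``if'' cases — the sporadic pair of part (1) (one of $\P,\Q$ a path, the other two-dimensional with a side of length two) and the nested case of part (2) — do not follow from \Cref{introthmA}: those implications are one-directional, and for the boxes in question the disjoint union is typically \emph{not} Macaulay (a ``cheap pair'' in the rank-$1$ level of one summand, i.e.\ two elements sharing a single shadow element, forces that summand's rank-$1$ elements to come first, clashing with the compatibility constraints coming from above), so it cannot serve as an intermediary. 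For these cases I would argue directly: exhibit the compatible total order on the glued poset (built rank by rank, interleaving the two boxes' Clements--Lindstr\"om orders in a prescribed pattern) and verify that each initial segment of a rank level has shadow equal to an initial segment of the level below. The guiding structural fact is that gluing at the minimum ``flattens'' the bottom — all rank-$1$ elements now lie over a single minimum, so no cheap pair survives there — and the diamond product flattens the top likewise; a box with a side of length two is thin enough (at most two elements in each interior rank) that no obstruction remains in the middle either, and for nested two-dimensional boxes the containment of $m\times n$ as the ``bottom-corner'' order ideal of $m'\times n'$ keeps the middle-rank bookkeeping under control. An alternative for the nested case is to realize $(m\times n)\vee(m'\times n')$ as an order ideal of $(m'\times n')\vee(m'\times n')$, which is Macaulay by \Cref{introthmA}, and transport the Macaulay order across.

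\emph{The ``only if'' directions.} Suppose $\P,\Q$ are boxes that are not isomorphic (and, for part (1), not the sporadic pair; for part (2), not nested). To show the glued poset is not Macaulay it suffices to find two consecutive ranks $r-1<r$ and a size $t$ for which no total order on the rank-$r$ level has the property that every initial segment's shadow is an initial segment of a single total order on rank $r-1$. Choosing $r-1,r$ away from the glued ranks, the rank-$r$ level of $\P\dia\Q$ (resp.\ $\P\vee\Q$) is the disjoint union of a rank-$r$ level of $\P$ with one of $\Q$, shadows split accordingly, and the minimum shadow of a size-$t$ set equals the inf-convolution $\min_{a+b=t}\bigl(f_\P(r,a)+f_\Q(r,b)\bigr)$, where $f_{\mathcal R}(r,\cdot)$ denotes the Clements--Lindstr\"om shadow-minimizing function of a box $\mathcal R$. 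A workable total order would have to realize this inf-convolution along one monotone path $t\mapsto\bigl(a(t),b(t)\bigr)$ \emph{and} be mutually compatible with a fixed order on rank $r-1$. For non-nested two-dimensional boxes (part (2)), $f_\P(r,\cdot)$ and $f_\Q(r,\cdot)$ compare in opposite senses at appropriately chosen ranks — exactly because one box is longer in one coordinate and the other in the other — so the monotone path forced at rank $r$ is incompatible with the one forced at rank $r\pm1$; I would isolate such a pair of ranks, a size $t$, and a set whose shadow strictly beats the size-$t$ initial segment, using the explicit shadow formulas (a Kruskal--Katona/Macaulay computation inside each two-dimensional box). For part (1): if both $\P$ and $\Q$ are at least two-dimensional I would work at a common interior plateau rank, where each box contributes a thick level containing cheap internal configurations that the boundary gluing cannot repair; if exactly one is a path then the other is at least two-dimensional, and unless it is a $2\times k$ box the same interior mismatch produces a counterexample, whereas a $2\times k$ box is exactly thin enough to escape — which is precisely the sporadic family.

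\emph{Main obstacle.} The hard part is the uniformity of the ``only if'' direction: for \emph{every} non-isomorphic (resp.\ non-nested) pair of boxes outside the stated exceptions one must produce an explicit rank $r$, size $t$, and witness set $A$ with $|\partial A|<|\partial(\text{size-}t\ \text{initial segment})|$ in the glued poset. I expect this to need either a reduction to the two-dimensional case or to ``nearly equal'' boxes, or a closed form for shadows of initial segments in the glued poset, together with a careful split on which box is wider at the chosen rank. The strict inequality is tightest in the borderline configurations — $2\times k$ boxes in part (1), or $m=m'$ with $n$ close to $n'$ in part (2) — and verifying those boundary cases, where most of the effort concentrates, is exactly what pins the exceptional families down to the precise form in the statement.
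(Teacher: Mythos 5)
Your outline gets the coarse architecture right (handle the isomorphic case via \Cref{introthmA} plus additivity of boxes, argue the remaining ``if'' cases by exhibiting an explicit order, and refute the ``only if'' cases by producing witness sets whose shadows beat initial segments), but it contains one outright wrong turn and otherwise stays at the level of a plan where the actual content of the theorem lives. The wrong turn: you assert that for the nested two-dimensional boxes the disjoint union ``is typically not Macaulay \ldots so it cannot serve as an intermediary.'' The paper's proof of the ``if'' direction of part (2) (\Cref{thm: wedge 2 boxes different size}) does exactly the opposite: it shows that $\P_1\sqcup\P_2$ \emph{is} Macaulay with the union simplicial order built from the two lexicographic orders, and then passes to the wedge product via \Cref{prop:equiv union wedge diamond}. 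The engine of that proof is a quantitative comparison of new shadows of segments living in the two \emph{different} boxes, namely $|\uSdw_{\text{new}}(S_1)|\geq|\uSdw_{\text{new}}(S_2)|$ for a segment $S_1$ in the larger box and a final segment $S_2$ in the smaller one, which rests on the explicit shadow formulas of \Cref{prop: size upper shadow of monomial in box}--\Cref{prop: size new shadow in box}. Nothing in your proposal supplies a substitute for this inequality, and your fallback ``order ideal'' argument does not work: Macaulayness is not inherited by order ideals, since shadow-minimality of initial segments in the ambient poset says nothing about minimality after intersecting shadows with the ideal.

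For the ``only if'' directions the gap is one of substance rather than strategy. Your inf-convolution heuristic points in the right direction, but the paper's arguments hinge on first \emph{forcing} the total order (e.g.\ in \Cref{prop: wedge box classification}, the chain of rank-one-shadow elements $x_1^{m-1},x_1^{m-2},\dots,x_1$ pins down $\Seg_1 2$ and hence the union simplicial order) and only then exhibiting a concrete witness (the full rank-$(n'-1)$ level of the smaller box, with shadow $m'-1$ against the initial segment's $m'$). Part (1) is far more than ``a common interior plateau rank'': the paper needs \Cref{prop: Not Mac test} for equal dimensions, \Cref{lem:diff by more than one dim not mac} for dimensions differing by at least two, \Cref{lem: n-1 2s} and \Cref{lem: geq 5 not Mac} with separate treatments of $n\geq 5$ and $n=3,4$ (including an explicit non-injective surjection $\Q_{[3]}\to\P_{[3]}$ and level-size counting), and \Cref{lem: path and 2 X k} for the path case. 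You correctly identify that the borderline configurations carry the difficulty, but the proposal as written does not produce the witness sets, the forced orders, or the case split that make the classification go through.
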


While it is a generalization of wedge product, fiber product proves to be simultaneously more flexible and more challenging to analyze. In this paper we commence a full analysis by studying fiber products of two-dimensional Clements--Lindstr\"om posets. We term the resulting posets  heart-shaped posets (see \Cref{heart} for an illustration). We characterize the Macaulay property in the family of heart-shaped posets. A surprising feature of our result is that different members of this family require different total orders for the Macaulay property to be satisfied. This indicates that additional flexibility with total orders will prove crucial in analyzing more complicated constructions.

\begin{introthm}[\Cref{thm: heart}]\label{introthm C}
The fiber product of $a_0\times a_1$ and $b_0\times b_1$ Clements--Lindstr\"om posets over a $c_0\times c_1$ Clements--Lindstr\"om poset, where $c_i = min\{a_i,b_i\}$, is Macaulay if and only if any of the following hold:
 \begin{enumerate} 
 \item $a_0\leq b_0$ and $a_1\leq b_1$, 
 \item $b_0\leq a_0$ and $b_1\leq a_1$,
   \item $b_0<a_0$ and $a_1<b_1$ and
   
       \begin{inparaenum} 
        \item $a_0=b_1$ or
        \item $b_1<a_0$ and $b_1+b_0 \leq a_0 + a_1$ or
        \item $a_0<b_1$ and $a_0+a_1 \leq b_0 + b_1$,
\end{inparaenum}
    \item condition (3) holds with $a_i$ replaced by $b_i$ and vice versa.
\end{enumerate}
\end{introthm}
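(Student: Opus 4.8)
The plan is to analyze the fiber product $\P = (a_0\times a_1)\times_{(c_0\times c_1)}(b_0\times b_1)$ directly via its combinatorial description. Since fiber product generalizes wedge product (identifying the two box posets along a common sub-box rather than along a single vertex), a heart-shaped poset decomposes into three pieces: the shared $c_0\times c_1$ sub-box sitting at the bottom, and two ``flaps'' consisting of the elements of each $a_i\times a_j$ (resp. $b_i\times b_j$) box lying above it. The first step is to set up coordinates and a candidate total order on each heart-shaped poset — and the key subtlety, flagged already in the discussion preceding \Cref{introthm C}, is that the correct total order is \emph{not} uniform across the family: in the ``nested'' cases (1) and (2) one expects the union simplicial order (as for the wedge product of nested boxes, cf.\ \Cref{introthm B}(2)), while in the ``crossing'' cases (3)--(4) one must interleave the two flaps according to the arithmetic conditions $b_1+b_0\le a_0+a_1$ etc. I would first pin down, for each case, exactly which total order is used, so that all subsequent compression arguments refer to a fixed order.

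The main tool for the positive direction should be the standard machinery for proving a poset is Macaulay: exhibit a compression operator (a ``shifting'' map toward the initial segments of the chosen total order on each rank) and show that it is both stable (order ideals are sent to order ideals) and that it does not increase the shadow, so that iterated compression reduces any order ideal to a ``compressed'' one whose ranks are initial segments, on which the Macaulay inequality can be checked combinatorially. Because the Clements--Lindstr\"om theorem already gives compression operators on each box separately, the heart of the argument is to combine these on the two flaps in a way compatible with the gluing along the shared sub-box — this is where conditions (1)--(4) enter: they are precisely the conditions under which the shadow of an interleaved initial segment stays an initial segment. I would organize the positive direction case by case: first the easy nested cases (1)/(2), where one flap's box structure dominates and the compression is essentially inherited from one of the two ambient boxes; then case 3(a), where $a_0 = b_1$ forces a symmetry that makes the two flaps ``fit together'' exactly; then 3(b) and 3(c), where the inequality on coordinate sums controls which flap's rank-$r$ elements should precede the other's, so that compression within each rank followed by compression across the two flaps commutes correctly.

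For the negative direction, the strategy is to produce explicit counterexamples to the Macaulay property whenever none of (1)--(4) holds — i.e.\ when the boxes genuinely ``cross'' ($b_0<a_0$, $a_1<b_1$ say) \emph{and} both sum inequalities fail, $b_1+b_0 > a_0+a_1$ and $a_0+a_1\ < b_1\ $ is violated, etc. Here one exhibits, for \emph{every} admissible total order on the heart-shaped poset, a rank $r$ and a set $S$ of that size for which no initial segment of that rank has a shadow as small as that of $S$; concretely one takes $S$ to be a suitable ``rectangular'' subset living mostly in one flap and computes that spreading it to an initial segment is forced to pick up extra shadow from the other flap. The bookkeeping is an elementary but fiddly inequality comparison between $\binom{\text{stuff}}{\cdot}$-type shadow counts for the two boxes; the content is that the two failing inequalities in (3)/(4) are exactly what is needed to make this comparison go the wrong way.

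The step I expect to be the main obstacle is the positive direction in the crossing cases 3(b)--3(c): verifying that the interleaved total order admits a shadow-non-increasing, ideal-stable compression operator. The difficulty is that compression on the two flaps interacts through their common boundary (the $c_0\times c_1$ sub-box), so one cannot simply compress the flaps independently; one needs a two-stage compression (first ``push'' mass from the dominated flap into the shared sub-box, then compress within the ambient box of the dominant flap) and argue that the sum condition $a_0+a_1\le b_0+b_1$ guarantees the first stage does not create shadow that the second stage cannot absorb. Making this two-stage argument rigorous — in particular handling the ranks near the top of the shared sub-box where both flaps contribute — is where the real work lies; everything else is either a direct appeal to Clements--Lindstr\"om on a single box or a finite check.
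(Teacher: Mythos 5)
Your proposal correctly isolates the one genuinely novel feature of this theorem --- that different members of the family need different total orders --- and your case organization mirrors the statement. But as a proof it has two substantive gaps, one in each direction, and in both places the strategy you sketch diverges from what actually makes the argument work. For the positive direction you defer everything to a ``shadow-non-increasing, ideal-stable compression operator'' on the interleaved order, and you yourself flag the interaction of the two flaps across the shared sub-box as the main obstacle; that obstacle is real, and the compression framework is not obviously the right tool because the heart-shaped poset is not a product, so the Clements--Lindstr\"om compressions on the two ambient boxes have no canonical common refinement. The paper's route avoids this entirely: in cases (1)--(2) the fiber product is \emph{literally isomorphic to one of the two boxes} (since $c_i=\min\{a_i,b_i\}$ forces $\L=\P$ or $\L=\Q$), so nothing needs to be proved there; and in the crossing cases one observes that for ranks $d$ below the top of the shared sub-box the poset agrees with a single Clements--Lindstr\"om box, while for ranks above it the relevant level behaves as a \emph{disjoint union} of two boxes, whereupon explicit segment-shadow formulas (the analogues of \Cref{lemma: box segments} and \Cref{lemma:initial segments general}) give the shadow-minimization directly, with the two candidate orders being lex and a ``twist'' of lex on one flap. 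Your plan never reduces to anything this concrete, so the hard inequality is asserted rather than proved.

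For the negative direction your plan is to exhibit, for every admissible total order, a set $S$ whose upper shadow is strictly smaller than that of the initial segment of size $|S|$, i.e.\ a violation of condition (1) alone. That is not how the obstruction actually manifests, and it is not clear such an $S$ always exists. The actual argument is structural: condition (1) applied to \emph{singletons} forces $y^{b_1-1}$ (the unique element of its rank with upper shadow of size $1$) to be the largest element of rank $b_1-1$, and forces $x^{a_0-1}y^{a_1-1}$ (empty upper shadow) to be the largest element of its rank; then condition (2) propagates the size-one initial segment upward through multiples of $y^{b_1-1}$, which can never reach $x^{a_0-1}y^{a_1-1}$ because $a_1-1<b_1-1$. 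So the contradiction is with the requirement that shadows of initial segments be initial segments, not with a shadow-size inequality for some cleverly chosen rectangular set. As written, your negative direction targets only one of the two Macaulay conditions and gives no mechanism for ruling out orders that satisfy condition (1) but fail condition (2); you would need to either supply the forced-ordering argument above or prove that a condition-(1) violation genuinely exists for every order, which you have not done.
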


Utilizing a correspondence between ring operations and poset operations developed in \Cref{prop: FP}, \Cref{introthm B} and \Cref{introthm C} lead to the following algebraic corollaries:

\begin{introthm}\label{introthm D}
\begin{enumerate}
    \item For integers $a_i,b_j\geq 2$ such that $\sum_{i=1}^n a_i-n=\sum_{i=1}^m b_j-m$ the ring  
     {\small
     \[
    \frac{K[x_1,\ldots, x_n,y_1,\ldots, y_m]}{(x_1^{a_1},\ldots, x_n^{a_n}, y_1^{b_1},\ldots, y_m^{b_m})+( x_iy_j:1\leq i\leq n, 1\leq j\leq m)+(x_1^{a_1-1}\cdots x_n^{a_n-1}-y_1^{b_1-1}\cdots y_m^{b_m-1})}
    \]
    }
is Macaulay if and only if any of the following conditions are satisfied
\begin{enumerate}
\item $n=1$ and $m=2$ and $\min\{b_1,b_2\}=2$  
\item $n=2$ and $m=1$ and $\min\{a_1,a_2\}=2$.
\item $n=m$ and the multisets $\{a_1, \ldots, a_n\}$ and $\{b_1, \ldots, b_m\}$ are equal.
\end{enumerate}
    \item For integers $a_i,b_j\geq 2$ such that $a_1\leq a_2$ and $b_1\leq  b_2$ the ring
    \[
     \frac{K[x_1,x_2,y_1,y_2]}{(x_1^{a_1}, x_2^{a_2}, y_1^{b_1},y_2^{b_2}, x_1y_1,x_1y_2, x_2y_1, x_2y_2)}
    \]
    is Macaulay if and only if either (a) $a_1\leq b_1$ and $a_2\leq b_2$ or (b) $b_1\leq a_1$ and $b_2\leq a_2$.
    \item For integers $a_i,b_j\geq 1$ the ring
    \[
     \frac{K[x,y]}{(x^{a_0}, y^{a_1})\cap(x^{b_0},y^{b_1})}
    \]
    is Macaulay if and only if any of the conditions in \Cref{introthm C} hold.
\end{enumerate}
\end{introthm}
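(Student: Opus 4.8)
The proof of all three items follows a single recipe. For each ring $A$ displayed, I first identify its monomial poset --- the poset of nonzero monomials of the quotient, ordered by divisibility --- with one of the paper's poset operations applied to box posets, invoking the ring/poset dictionary of \Cref{prop: FP}; then I read off the answer from \Cref{introthm B} in cases (1) and (2) and from \Cref{introthm C} in case (3). The whole content is therefore the translation step; once the combinatorics of standard monomials is unwound, the remainder is a check that the hypotheses of the cited poset theorems are met.

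\emph{Item (1).} The quotient $K[x_1,\dots,x_n]/(x_1^{a_1},\dots,x_n^{a_n})$ has monomial poset the cartesian product $\P$ of $n$ paths, the $i$th with $a_i$ vertices; it is a box poset of rank $\sum_i a_i-n$. The analogous quotient in the $y_j$'s gives a box $\Q$ of rank $\sum_j b_j-m$, and the hypothesis $\sum_i a_i-n=\sum_j b_j-m$ says exactly that $\P$ and $\Q$ have equal rank, so their diamond product is defined. In $A$ the relations $x_iy_j=0$ force every nonzero monomial to involve only $x$'s or only $y$'s --- two arms meeting in the single monomial $1$, the common minimum --- while the binomial $x_1^{a_1-1}\cdots x_n^{a_n-1}-y_1^{b_1-1}\cdots y_m^{b_m-1}$ glues the two socle monomials, the maxima of the two arms. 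By \Cref{prop: FP} the monomial poset of $A$ is $\P\dia\Q$. Now \Cref{introthm B}(1) characterizes when $\P\dia\Q$ is Macaulay; translating its conditions into constraints on the exponents --- a path is a single variable, and a two-dimensional box with a side of length $2$ is exactly two variables, one of whose exponents equals $2$ --- yields conditions (a) and (b), the remaining (isomorphic) case being that the two exponent tuples are permutations of one another.

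\emph{Items (2) and (3).} In (2) the only relations besides the pure powers are the mixed products $x_iy_j$, with no socle identification, so the nonzero monomials again split into an $x$-arm and a $y$-arm glued only at $1$; hence by \Cref{prop: FP} the monomial poset is the wedge product $\P\vee\Q$ of the two-dimensional boxes with sides $a_1,a_2$ and $b_1,b_2$. As $a_i,b_j\ge 2$, relabeling within each pair so that $a_1\le a_2$ and $b_1\le b_2$ puts us in the scope of \Cref{introthm B}(2), which gives that $\P\vee\Q$ is Macaulay iff $a_1\le b_1$ and $a_2\le b_2$, or vice versa --- exactly (a)/(b). In (3) a monomial $x^iy^j$ survives in $K[x,y]/\big((x^{a_0},y^{a_1})\cap(x^{b_0},y^{b_1})\big)$ iff it lies outside at least one of the two ideals, i.e. iff $i<a_0,j<a_1$ or $i<b_0,j<b_1$; ordered by divisibility these monomials form the union of an $a_0\times a_1$ rectangle and a $b_0\times b_1$ rectangle sharing the corner $c_0\times c_1$ sub-rectangle, with $c_i=\min\{a_i,b_i\}$ --- precisely the heart-shaped poset, the fiber product of the two box posets over the $c_0\times c_1$ box. (Equivalently, since $(x^{a_0},y^{a_1})+(x^{b_0},y^{b_1})=(x^{c_0},y^{c_1})$, there is a pullback square exhibiting $A$ as the ring-theoretic fiber product of $K[x,y]/(x^{a_0},y^{a_1})$ and $K[x,y]/(x^{b_0},y^{b_1})$ over $K[x,y]/(x^{c_0},y^{c_1})$, so \Cref{prop: FP} applies directly.) Applying \Cref{introthm C} to this heart-shaped poset gives exactly the listed conditions.

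Finally, ``Macaulay'' for a ring means ``Macaulay'' for its monomial poset with respect to some total monomial order, and the poset-level results quoted either exhibit such an order --- note that \Cref{introthm C} requires genuinely different total orders for different heart-shaped posets --- or exclude every one, so the algebraic equivalences transfer verbatim. I expect the only real obstacle to lie in the verification underlying \Cref{prop: FP}: one must check carefully that the mixed-product relations, the socle-difference binomial, and the ideal intersection produce the wedge, diamond, and fiber products on the nose, with minima, maxima and ranks matching, and in particular that in (3) the union of two rectangles along a common corner is the poset that our fiber-product construction names. Once that dictionary is in place, \Cref{introthm B}, \Cref{introthm C} and \Cref{introthmA} finish the job by pure translation.
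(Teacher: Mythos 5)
Your proposal is correct and follows essentially the same route as the paper: identify each ring's monomial poset with the diamond, wedge, or fiber product of box posets via \Cref{prop: FP} (together with \Cref{ex: FP wedge} and \Cref{ex: FP intersection}), then quote \Cref{thm: classification diamond product of boxes}, \Cref{prop: wedge box classification}, and \Cref{thm: heart} respectively. Your parenthetical in item (1) about the isomorphic case is apt --- the classification of diamond products also allows $\P\cong\Q$, i.e.\ the exponent tuples being permutations of one another, a case the stated list (a)--(b) of \Cref{introthm D}(1) omits even though the paper's own proof cites the same classification.
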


Our paper is organized as follows: in \Cref{s:background} we recall the necessary background on posets and in \Cref{s: poset operations} we introduce the poset and ring operations we are interested in. We study iterated disjoint unions, wedge, and diamond products in \Cref{s: uwd}. In \Cref{s: wd box} we analyze wedge and diamond products of Clements--Lindstr\"om posets and in \Cref{s: fiber box} we study fiber products of Clements--Lindstr\"om posets. We end the paper with \Cref{s: examples} containing conjectures, examples and counterexamples related to taking Cartesian products of Macaulay posets. While this operation has been more extensively studied, it continues to afford open questions for future investigation (cf. \Cref{conjecture2}).

\smallskip 

\paragraph{\bf Acknowledgements.} Computations leading to several results in this paper were
performed using the computer algebra system Macaulay2 \cite{M2}, specifically the package \texttt{MacaulayPosets} developed by some of the authors of this paper \cite{BKLS}. The software tool \cite{java} was also used for computer-assisted verifications. 

We acknowledge the support of NSF DMS--2341670
for the Polymath Jr.\,program, which facilitated our collaboration. 
Seceleanu was partially supported by NSF grant DMS--2401482.

\section{Background}\label{s:background}

In this paper $\mathbb{N}$ denotes the set of nonnegative integers.

\begin{defn}\label{def: partial order}
A {\bf partially ordered set (poset)} is a set $\P$ which has an order relation on it, that is, a binary relation denoted by $\leq$ which satisfies
\begin{itemize}
\item $a\leq a$ for all $a\in \P$
\item if for some $a,b\in \P$ we have $a\leq b$ and $b\leq a$, then $a=b$
\item if for some $a,b,c \in \P$ we have $a\leq b$ and $b\leq c$, then $a\leq c$.
\end{itemize}
As per usual, we write $a<b$ to denote that $a\leq b$ and $a\neq b$.

A {\bf total order}  on a set $\P$ is a binary relation that satisfies the three conditions in \Cref{def: partial order} but for which every pair of elements of $\P$ is comparable.
\end{defn}

A poset $\P$ is sometimes pictured as a graph, called the {\bf Hasse graph} of $\P$. In this graph, an edge connects two elements $a,b\in \P$ so that $b$ {\bf covers} $a$, that is, $a<b$ and there is no $c$ that satisfies $a<c<b$.  In the Hasse graph $a$ will be placed lower than $b$. All  other inequalities can be inferred from this information.

\begin{defn}
A poset $\P$ is {\bf ranked} if there is a function $r : \P \to \N$ called a {\em rank function} for which the following conditions hold:
\begin{itemize}
\item  There is a minimum element $x \in \P$ such that $r(x) = 0$.
\item If  $b$ covers $a$ in $\P$ then $r(b)=r(a)+1$.
\end{itemize}
The rank of $\P$ is the largest rank of an element of $\P$.
For $d\in \N$, the $d$-th {\bf level} of $\P$ is $\P_{[d]} =\{x\in \P \mid r(x)=d\}$.
\end{defn}

The following are examples of ranked posets with level numbers increasing consecutively from 0 for the lowest pictured element. We reverse the order more standardly used for some of these posets to render the correspondence with algebraic posets more transparent.

\begin{ex}
A {\em path poset of length $d$} is a poset on $d+1$ vertices $0,\ldots, d$ which is totally ordered by the ordering of $0,\ldots, d$ as real numbers. The Hasse graph for $d=2$ is
\begin{center}
\includegraphics[height=2cm]{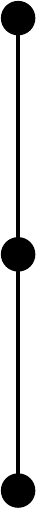}
\end{center}
\end{ex}

Algebraic structures provide an  important source of examples for posets.

\begin{ex}\label{ex: monomial poset}
 Let $R=K[x_1,\dots, x_n]$ be the polynomial ring on variables $x_1,\dots, x_n$ with coefficients in a field $K$. Then the set of all monomials in $R$ equipped with the divisibility relation, that is,  the relation $\bx^\ba =x_1^{a_1}x_2^{a_2}\cdots x_n^{a_n}\mid \bx^\bb=x_1^{b_1}x_2^{b_2}\cdots x_n^{b_n}$ if and only if $\ba\leq \bb$, that is, $a_i\leq b_i$ for $1\leq i\leq n$, is a partial order. We identify monomials with their exponent vectors throughout, namely identifying  $\bx^\ba =x_1^{a_1}x_2^{a_2}\cdots x_n^{a_n}$ with  $\ba = (a_1, a_2, \ldots, a_n)$ and thus identifying the monomial poset with $\mathbb{N}^n$. The rank function is given by the degree $\deg(\bx^\ba)=a_1+a_2+\cdots+a_n$.
 \end{ex}
 
 More generally, one can associate a monomial poset to any quotient of the polynomial ring by an ideal. If $R$ denotes the polynomial ring and $I$ is an ideal of $R$, a monomial of $S=R/I$ is the coset $\overline{\mu}$  of a monomial  $\mu\in R$ such that  $\mu\not\in I$. Divisibility is defined by $\overline{\mu}$ divides $\overline{\nu}$ if there exists a monomial $\overline{\theta}$ such that $\overline{\nu}=\overline{\mu}\cdot \overline{\theta}$. For the technical details of this paradigm see \cite{Kuz}. In this paper we restrict our attention to a single type of such posets.

\begin{defn}\label{ex: box poset}
The $d_1\times  \ldots \times  d_n$ {\em Clements-Lindstr\"om poset} is the poset of nonzero monomials in the quotient ring 
\[
S=\frac{K[x_1,\ldots, x_n]}{(x_1^{d_1}, \ldots, x_n^{d_n})}.
\]
Identifying monomials with their exponent vectors, the Clements--Lindstr\"om poset is identified with the set
\[
\{\ba=(a_1,\ldots, a_n) \in \N^n \mid 0\leq a_i<d_i \text{ for all } 1\leq i\leq n\}
\]
with the partial order given by componentwise inequality $\ba\leq \bb $ if $a_i\leq b_i$  for all  $1\leq i\leq n$. 
Thus a $d_1\times  \ldots \times  d_n$ Clements--Lindstr\"om poset is the Cartesian product of $n$ paths of with $d_1, d_2, \ldots, d_n$ vertices, respectively. 
Picturing the element $\ba$ of this poset as a unit cube having the corner closest to origin at the lattice point $\ba$ yields the following figures representing  $3\times 4$,  $2\times 2\times 2$ and  $2\times 3\times 4$ Clements--Lindstr\"om posets.
\smallskip

\begin{figure}[htbp]
    \centering
    
    \begin{minipage}[b]{0.33\textwidth}
        \centering
        \begin{tikzpicture}[scale=1]
            \foreach \x in {0,1,2} {
                \foreach \y in {0,1,2,3} {
                    \filldraw[fill=mypink, draw=black] (\x, \y) rectangle ++(1,1);
                    \node[font=\normalsize] at (\x+0.5, \y+0.5) {$(\x, \y)$};
                }
            }
        \end{tikzpicture}
        \caption*{$3\times 4$}
    \end{minipage}%
    \hspace{-1.3cm}%
    \begin{minipage}[b]{0.33\textwidth}
        \centering
        \begin{tikzpicture}[x={(-0.7cm,-0.4cm)}, y={(0.7cm,-0.4cm)}, z={(0cm,0.9cm)},scale=0.8]
            \foreach \z in {0,1} {
                \foreach \x in {0,1} {
                    \foreach \y in {0,1} {
                        \filldraw[fill=mypink!60, draw=black] (\x,\y,\z+1) -- (\x+1,\y,\z+1) -- (\x+1,\y+1,\z+1) -- (\x,\y+1,\z+1) -- cycle;
                        \filldraw[fill=mypink, draw=black] (\x+1,\y,\z) -- (\x+1,\y+1,\z) -- (\x+1,\y+1,\z+1) -- (\x+1,\y,\z+1) -- cycle;
                        \filldraw[fill=mypink!140, draw=black] (\x,\y+1,\z) -- (\x+1,\y+1,\z) -- (\x+1,\y+1,\z+1) -- (\x,\y+1,\z+1) -- cycle;
                    }
                }
            }
        \end{tikzpicture}
        \caption*{$2 \times 2 \times 2$}
    \end{minipage}%
    \hspace{-1.3cm}%
    \begin{minipage}[b]{0.33\textwidth}
        \centering
        \begin{tikzpicture}[x={(-0.7cm,-0.4cm)}, y={(0.7cm,-0.4cm)}, z={(0cm,0.9cm)},scale=0.8]
            \foreach \z in {0,1,2,3} {
                \foreach \x in {0,1} {
                    \foreach \y in {0,1,2} {
                        \filldraw[fill=mypink!60, draw=black] (\x,\y,\z+1) -- (\x+1,\y,\z+1) -- (\x+1,\y+1,\z+1) -- (\x,\y+1,\z+1) -- cycle;
                        \filldraw[fill=mypink, draw=black] (\x+1,\y,\z) -- (\x+1,\y+1,\z) -- (\x+1,\y+1,\z+1) -- (\x+1,\y,\z+1) -- cycle;
                        \filldraw[fill=mypink!140, draw=black] (\x,\y+1,\z) -- (\x+1,\y+1,\z) -- (\x+1,\y+1,\z+1) -- (\x,\y+1,\z+1) -- cycle;
                    }
                }
            }
        \end{tikzpicture}
        \caption*{$2 \times 3 \times 4$}
    \end{minipage}%
\end{figure}
\begin{figure}[htbp]
    \centering
    \begin{tikzpicture}[scale=1.5]
        \node (000) at (0,0) {$1$};
        
        \node (100) at (-1.5,1) {$x_1$};
        \node (010) at (0,1) {$x_2$};
        \node (001) at (1.5,1) {$x_3$};
        
        \node (110) at (-1.5,2) {$x_1x_2$};
        \node (101) at (0,2) {$x_1x_3$};
        \node (011) at (1.5,2) {$x_2x_3$};
        
        \node (111) at (0,3) {$x_1x_2x_3$};
        
        \draw (000) -- (100); \draw (000) -- (010); \draw (000) -- (001);
        \draw (100) -- (110); \draw (100) -- (101);
        \draw (010) -- (110); \draw (010) -- (011);
        \draw (001) -- (101); \draw (001) -- (011);
        \draw (110) -- (111); \draw (101) -- (111); \draw (011) -- (111);
    \end{tikzpicture}
    \caption{The Hasse diagram of a $2 \times 2 \times 2$ Clements--Lindstr\"om poset displayed as a distributive lattice.}
\end{figure}
\end{defn}

\subsection{Macaulay posets}

In this section we consider posets endowed with both their partial order, add  an additional {\em total} order and impose a condition on the interaction between the two. In practice we will not need to refer to inequalities with respect to the poset order, thus we utilize the notation $\leq$ for the total order.

\begin{defn}
For a subset $A$ of a poset $\P$ we define the {\bf upper shadow} of $A$ as
\[
\uSdw_\P A=\{b\in P : \text{there is an } a\in A \text{ such that } b \text{ covers } a\}
\]
and the {\bf lower shadow} of $A$ as
\[
\dSdw_\P A =\{b\in P :  \text{there is an } a\in A \text{ such that }  a \text{ covers } b\}.
\]
\end{defn}

\begin{ex} Consider the poset of monomials in $K[x,y]$ ordered by divisibility. In the figure below  the set $A=\{x^2,xy\}$ is shown in red, its lower shadow $\dSdw A=\{x,y\}$ is shown in green and its upper shadow $\uSdw A=\{x^2y,xy^2,y^3\}$ is shown in blue.
	\begin{center}
    \begin{tikzpicture}[scale=0.8]
        \node (0) at (0,0) {1};
        \node (1) at (-0.8,1) {\textcolor{green}{$x$}};
        \node (2) at (0.8,1) {\textcolor{green}{$y$}};
        \node (3) at (-1.6,2) {\textcolor{red}{$x^2$}};
        \node (4) at (0,2) {\textcolor{red}{$xy$}};
        \node (5) at (1.6,2) {$y^2$};
        \node (6) at (-2.4,3) {\textcolor{blue}{$x^3$}};
        \node (7) at (-0.8,3) {\textcolor{blue}{$x^2y$}};
        \node (8) at (0.8,3) {\textcolor{blue}{$xy^2$}};
        \node (9) at (0,4) {$x^2y^2$};
        \node (10) at (-1.6,4) {$x^3y$};
        \node (11) at (-0.8,5) {$x^3y^2$};
        \node (12) at (-3.2,4) {$\cdots$};
        \node (13) at (-2.4,5) {$\cdots$};
        \node (14) at (-1.6,6) {$\cdots$};
        \node (15) at (0,6) {$\cdots$};
        \node (16) at (0.8,5) {$\cdots$};
        \node (17) at (1.6,4) {$\cdots$};
        \node (18) at (2.4,3) {$\cdots$};
     
        \draw [-] (0) -- (1);
        \draw [-] (0) -- (2);
        \draw [-] (1) -- (3);
        \draw [-] (1) -- (4);
        \draw [-] (2) -- (4);
        \draw [-] (2) -- (5);
        \draw [-] (3) -- (6);
        \draw [-] (3) -- (7);
        \draw [-] (4) -- (7);
        \draw [-] (4) -- (8);
        \draw [-] (5) -- (8);
        \draw [-] (9) -- (8);
        \draw [-] (9) -- (7);
        \draw [-] (6) -- (10);
        \draw [-] (10) -- (11);
        \draw [-] (9) -- (11);
        \draw [-] (10) -- (7);
        \draw [-] (6) -- (12);
        \draw [-] (10) -- (13);
        \draw [-] (11) -- (14);
        \draw [-] (11) -- (15);
        \draw [-] (9) -- (16);
        \draw [-] (8) -- (17);
        \draw [-] (5) -- (18);
    \end{tikzpicture}
    \end{center}
\end{ex}

\begin{defn}
Suppose that $\mathcal{P}$ is ranked poset with an additional total order $>$ on it.

A set $A$ consisting of elements of the same rank is called a {\bf segment}, 
if for any $a,b\in A$ and any $c\in \mathcal{P}$ of the same rank such that $a > c > b$,
we must have $c\in A$.

The {\bf initial segment} of size $q$ in the $d$-th level $\P_{[d]}$  is 
\[
\Seg_d q= \{ \text{the largest } q \text{ elements with respect to } \leq  \text{ in } \P_{[d]} \}.
\]

		The {\bf final segment} of size $q$ in the $d$-th level $\P_{[d]}$ is the set of smallest $q$ elements with respect to $\leq$ of $P_{[d]}$.  
\end{defn}

We define a total order which is often considered when analyzing the poset of monomials in a polynomial ring.
\begin{defn}\label{def: lex}
Let $\bx^\ba=x_1^{a_1}x_2^{a_2}\cdots x_n^{a_n} ,\bx^\bb=x_1^{b_1}x_2^{b_2}\cdots x_n^{b_n}$ be monomials with exponent vectors $\ba=(a_1,\ldots, a_n), \bb=(b_1,\ldots, b_n)$. We say that  $\bx^\ba$ precedes $\bx^\bb$ in the lexicographic order and write $\bx^\ba >_{\lex} \bx^\bb$ if  the leftmost nonzero entry of the vector $\ba-\bb$ is positive. We call this the \textbf{lexicographic} order (lex order for short).
\end{defn}

We use this order to illustrate some of the notions introduced above.

\begin{ex}
Suppose that we consider the poset of monomials of $K[x,y]$ with the lex order.
The set $ \{x^4y^4,x^5y^3,x^6y^2\}$ is a segment (neither initial not final), 
but $\{x^4y^4,x^6y^2\}$ is not a segment.
\end{ex}
    
\begin{ex}
The monomials of degree 2 in $K[x,y,z]$ are ordered with respect to the lexicographic order by
$x^2>_{\lex} xy>_{\lex} xz>_{\lex} y^2>_{\lex} yz>_{\lex} z^2$. Thus we have $\Seg_2 4=\{x^2, xy, xz, y^2\}$.
\end{ex}

\begin{defn}\label{def: Macaulay}
A poset $\P$ is {\bf Macaulay} if there exists an additional total order on $\P$ such that the following hold for any subset $A\subset \P_{[d]}$:
\begin{enumerate}
\item Initial segments have the smallest upper shadows: 
\[
\left | \uSdw_\P\Seg_d |A|  \right | \leq  |\uSdw_\P(A)|;
\]
\item The upper shadow of an initial segment is an initial segment:
\[
 \uSdw_\P  \Seg_d |A| =\Seg_{d+1} |\uSdw_\P\Seg_d|A||.
\]
\end{enumerate}
\end{defn}

The notion of Macaulay poset translates algebraically as follows. A quotient of a polynomial ring by a homogeneous ideal is a {\bf Macaulay ring} if its poset of monomials is a Macaulay poset.

\begin{ex} We work in the poset $\cP$ of monomials in $K[x,y]$  with the partial order given by divisibility and the total order  the lexicographic order. On the left below the set $A=\{x^2,y^2\}$ is pictured in red and its upper shadow is pictured in blue. We have $|A|=2$ and $|\uSdw(A)|=4$. On the right below we take a set $B=\{x^2,xy\}$ consisting of the two lexicographically largest monomials of degree two in $\cP$. The upper shadow of $B$ is also shown in blue. Note that $|\uSdw(B)|=3\leq |\uSdw(A)|=4$ and also that $\uSdw(B)$ is itself an initial segment with respect to the lexicographic order, that is, $\uSdw(B)$ consists of the three lexicographically largest monomials of degree three in $\cP$.

\begin{center}
\begin{tikzpicture}[scale=0.8]
    \node (0) at (0,0) {1};
    \node (1) at (-0.8,1) {$x$};
    \node (2) at (0.8,1) {$y$};
    \node (3) at (-1.6,2) {\textcolor{red}{$x^2$}};
    \node (4) at (0,2) {$xy$};
    \node (5) at (1.6,2) {\textcolor{red}{$y^2$}};
    \node (6) at (-2.4,3) {\textcolor{blue}{$x^3$}};
    \node (7) at (-0.8,3) {\textcolor{blue}{$x^2y$}};
    \node (8) at (0.8,3) {\textcolor{blue}{$xy^2$}};
    \node (9) at (0,4) {$x^2y^2$};
    \node (10) at (-1.6,4) {$x^3y$};
    \node (11) at (-0.8,5) {$x^3y^2$};
    \node (12) at (-3.2,4) {$\cdots$};
    \node (13) at (-2.4,5) {$\cdots$};
    \node (14) at (-1.6,6) {$\cdots$};
    \node (15) at (0,6) {$x^3y^3$};
    \node (16) at (0.8,5) {$x^2y^3$};
    \node (17) at (1.6,4) {$xy^3$};
    \node (18) at (2.4,3) {\textcolor{blue}{$y^3$}};
    \node (19) at (3.2,4) {$\cdots$};
    \node (20) at (2.4,5) {$\cdots$};
    \node (21) at (1.6,6) {$\cdots$};
 
    \draw [-] (0) -- (1);
    \draw [-] (0) -- (2);
    \draw [-] (1) -- (3);
    \draw [-] (1) -- (4);
    \draw [-] (2) -- (4);
    \draw [-] (2) -- (5);
    \draw [-] (3) -- (6);
    \draw [-] (3) -- (7);
    \draw [-] (4) -- (7);
    \draw [-] (4) -- (8);
    \draw [-] (5) -- (8);
    \draw [-] (9) -- (8);
    \draw [-] (9) -- (7);
    \draw [-] (6) -- (10);
    \draw [-] (10) -- (11);
    \draw [-] (9) -- (11);
    \draw [-] (10) -- (7);
    \draw [-] (6) -- (12);
    \draw [-] (10) -- (13);
    \draw [-] (11) -- (14);
    \draw [-] (11) -- (15);
    \draw [-] (9) -- (16);
    \draw [-] (8) -- (17);
    \draw [-] (5) -- (18);
    \draw [-] (17) -- (18);
    \draw [-] (16) -- (17);
    \draw [-] (15) -- (16);
    \draw [-] (18) -- (19);
    \draw [-] (17) -- (20);
    \draw [-] (16) -- (21);
\end{tikzpicture}
\qquad \quad
\begin{tikzpicture}[scale=0.8]
    \node (0) at (0,0) {1};
    \node (1) at (-0.8,1) {$x$};
    \node (2) at (0.8,1) {$y$};
    \node (3) at (-1.6,2) {\textcolor{red}{$x^2$}};
    \node (4) at (0,2) {\textcolor{red}{$xy$}};
    \node (5) at (1.6,2) {$y^2$};
    \node (6) at (-2.4,3) {\textcolor{blue}{$x^3$}};
    \node (7) at (-0.8,3) {\textcolor{blue}{$x^2y$}};
    \node (8) at (0.8,3) {\textcolor{blue}{$xy^2$}};
    \node (9) at (0,4) {$x^2y^2$};
    \node (10) at (-1.6,4) {$x^3y$};
    \node (11) at (-0.8,5) {$x^3y^2$};
    \node (12) at (-3.2,4) {$\cdots$};
    \node (13) at (-2.4,5) {$\cdots$};
    \node (14) at (-1.6,6) {$\cdots$};
    \node (15) at (0,6) {$x^3y^3$};
    \node (16) at (0.8,5) {$x^2y^3$};
    \node (17) at (1.6,4) {$xy^3$};
    \node (18) at (2.4,3) {$y^3$};
    \node (19) at (3.2,4) {$\cdots$};
    \node (20) at (2.4,5) {$\cdots$};
    \node (21) at (1.6,6) {$\cdots$};
 
    \draw [-] (0) -- (1);
    \draw [-] (0) -- (2);
    \draw [-] (1) -- (3);
    \draw [-] (1) -- (4);
    \draw [-] (2) -- (4);
    \draw [-] (2) -- (5);
    \draw [-] (3) -- (6);
    \draw [-] (3) -- (7);
    \draw [-] (4) -- (7);
    \draw [-] (4) -- (8);
    \draw [-] (5) -- (8);
    \draw [-] (9) -- (8);
    \draw [-] (9) -- (7);
    \draw [-] (6) -- (10);
    \draw [-] (10) -- (11);
    \draw [-] (9) -- (11);
    \draw [-] (10) -- (7);
    \draw [-] (6) -- (12);
    \draw [-] (10) -- (13);
    \draw [-] (11) -- (14);
    \draw [-] (11) -- (15);
    \draw [-] (9) -- (16);
    \draw [-] (8) -- (17);
    \draw [-] (5) -- (18);
    \draw [-] (17) -- (18);
    \draw [-] (16) -- (17);
    \draw [-] (15) -- (16);
    \draw [-] (18) -- (19);
    \draw [-] (17) -- (20);
    \draw [-] (16) -- (21);
\end{tikzpicture}
\end{center}
\end{ex}

We end the discussion  with a more refined notion of shadow that incorporates the total order.

\begin{defn}
		Suppose that $\mathcal{P}$ is ranked poset with an additional total order  on it.
		Let $A$ be a segment of some rank and $B$ be all the elements of the same rank that are larger than all the elements of $A$.
		The {\bf new shadow} of $A$ is defined to be
		\begin{align*}
			\uSdw_{\text{new}} (A) = \uSdw(A)\setminus \uSdw(B).
		\end{align*}
\end{defn}

This leads to defining a property of posets termed additivity, which will help strengthen our results. The formal concept of additivity was introduced in \cite{ClementsAdditive} to abstract the combinatorial properties of Clements-Lindstr\"om posets established earlier in \cite{Clements1}.

\begin{defn}
		Suppose that $\mathcal{P}$ is a Macaulay poset.
		We say that $\mathcal{P}$ is {\bf additive} if the following hold:
		\begin{enumerate}
			\item If $A$ is an initial segment and $B$ is a segment such that $|A|=|B|$ then 
			$$|\uSdw_{\text{new}}(A)| \geq |\uSdw_{\text{new}}(B)|.$$
			\item If $B$ is a segment and $C$ is a final segment such that $|B|=|C|$ then 
			$$|\uSdw_{\text{new}}(B)| \geq |\uSdw_{\text{new}}(C)|.$$
		\end{enumerate}
	\end{defn}

\section{Poset operations and algebraic counterparts}\label{s: poset operations}

In this section we collect a number of poset operations we will use in the rest of the paper. The disjoint union operation is  well-known, but the other operations in this section are either being introduced or given formal names here for the first time. 

\subsection{Disjoint union, wedge product, and diamond product}
\begin{defn}\label{def: disjointunion}
Suppose that for $1\leq i\leq t$ we have posets $\P_i$. Their {\bf disjoint union} is the set
$$
\bigsqcup_{i=1}^t \P_i,
$$
meaning that we take the disjoint union of the sets $\P_i$, with the induced partial order $a\leq b$ if and only if $a, b \in \P_i$ and $a\leq b$ in $\P_i$ for some $i$.
\end{defn}

\begin{defn}\label{def: wedge}
Suppose that for $1\leq i\leq t$ we have posets $\P_i$ with unique least element $\ell_i$. Their {\bf wedge product} is the  poset denoted $\bigvee_{i=1}^t \P_i$ or $\P_1\vee \P_2 \vee \cdots \vee  \P_t$ given by
\[
\P_1\vee \P_2 \vee \cdots \vee  \P_t=\left(\bigsqcup_{i=1}^t P_i \right)/ (\ell_1=\ell_2=\cdots  =\ell_t).
\]
Above we take the disjoint union of the sets $\P_i$ in which we identify all the $\ell_i$ into one element,
with the partial order $a\leq b$ if and only if $a\leq b$ in $\P_i$ for some $i$.
\end{defn}

\begin{ex}\label{ex: spider poset}
A {\em spider} poset with legs of length $d_1,\ldots, d_t$ is the wedge product of $t$ path posets of lengths $d_1, d_2, \ldots, d_t$ respectively. Here are some Hasse graphs of spider posets:
\begin{center}
\begin{tikzpicture}[scale=0.7, baseline=0pt, every node/.style={circle, draw=black, fill=black, inner sep=1.2pt}]
    \node (root) at (0,0) {};
    
    \node (n1) at (0,1) {};
    \node (n2) at (0,2) {};
    \draw (root) -- (n1) -- (n2);
\end{tikzpicture}
\qquad\quad
\begin{tikzpicture}[scale=0.7, baseline=0pt, every node/.style={circle, draw=black, fill=black, inner sep=1.2pt}]
    \node (root) at (0,0) {};
    
    \node (l1) at (-0.8,1) {};
    \node (l2) at (-0.8,2) {};
    \draw (root) -- (l1) -- (l2);
    
    \node (r1) at (0.8,1) {};
    \node (r2) at (0.8,2) {};
    \node (r3) at (0.8,3) {};
    \draw (root) -- (r1) -- (r2) -- (r3);
\end{tikzpicture}
\qquad\quad
\begin{tikzpicture}[scale=0.7, baseline=0pt, every node/.style={circle, draw=black, fill=black, inner sep=1.2pt}]
    \node (root) at (0,0) {};
    
    \node (l1) at (-1.2,1) {};
    \node (l2) at (-1.2,2) {};
    \node (l3) at (-1.2,3) {};

    \draw (root) -- (l1) -- (l2) -- (l3);
    
    \node (m1) at (0,1) {};
    \node (m2) at (0,2) {};
    \node (m3) at (0,3) {};
    \draw (root) -- (m1) -- (m2) -- (m3);
    
    \node (r1) at (1.2,1) {};
    \node (r2) at (1.2,2) {};
    \node (r3) at (1.2,3) {};
    \draw (root) -- (r1) -- (r2) -- (r3);
\end{tikzpicture}
\qquad\quad
\begin{tikzpicture}[scale=0.7, baseline=0pt, every node/.style={circle, draw=black, fill=black, inner sep=1.2pt}]
    \node (root) at (0,0) {};
    
    \node (l1_1) at (-1.2, 1) {};
    \draw (root) -- (l1_1);
    
    \node (l2_1) at (0, 1) {};
    \node (l2_2) at (0, 2) {};
    \draw (root) -- (l2_1) -- (l2_2);
    
    \node (l3_1) at (1.2, 1) {};
    \node (l3_2) at (1.2, 2) {};
    \node (l3_3) at (1.2, 3) {};
    \draw (root) -- (l3_1) -- (l3_2) -- (l3_3);
\end{tikzpicture}
\end{center}
\end{ex}

\begin{defn}\label{def: diamond}
Suppose that for $1\leq i\leq t$ we have ranked posets $\P_i$ with unique least element $\ell_i$ and unique largest element $L_i$ such that the posets $\P_i$ have the same maximum rank. Their {\bf diamond product} is the poset denoted $\bigdia_{i=1}^t \P_i$ or $\P_1\di \P_2 \di \cdots \di \P_t$ is given by
\[
\P_1\di \P_2 \di \cdots \di \P_t = \left(\bigsqcup_{i=1}^t \P_i \right)\Big/ (\ell_1=\ell_2=\cdots=\ell_t, L_1=L_2=\cdots=L_t ).
\]
Above we take the disjoint union of the sets $\P_i$ in which we identify all the $\ell_i$ into one element and all the $L_i$ elements into another element,
with the partial order $a\leq b$ if and only if $a\leq b$ in $\P_i$ for some $i$.
\end{defn}

\begin{ex}\label{ex: diamond}\label{ex: tori diamond}
Here are some Hasse graphs of diamond products of path posets:
\begin{center}
\includegraphics[height=2.5cm]{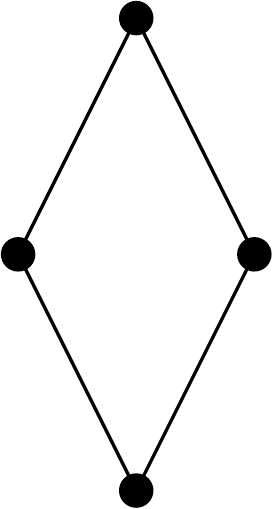}
\quad 
\includegraphics[height=2.5cm]{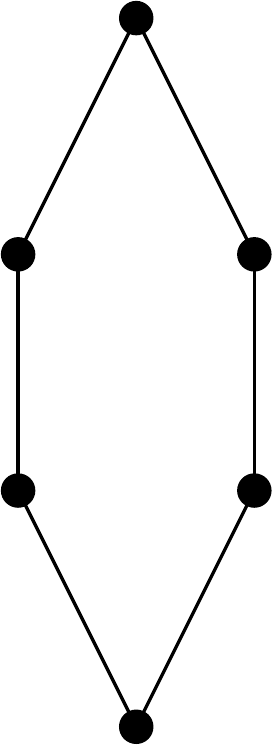}
\quad
\includegraphics[height=2.5cm]{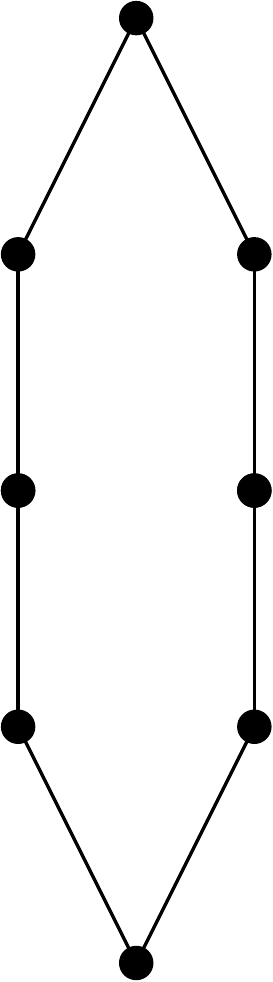}
\quad
\includegraphics[height=2.5cm]{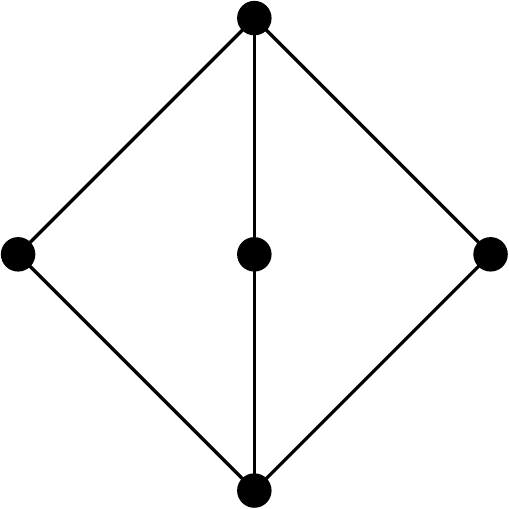}
\end{center}
Diamond products of two path posets of the same length such as the leftmost three posets above are called {\em discrete tori}. The rightmost poset above is called a {\em diamond poset}, which inspired the name for the operation described above.
\end{ex}

\subsection{Fiber product}

One can construct fiber product posets as an operation that generalizes the wedge product, meaning that instead of identifying the least element in the respective posets we identify a set of elements  given by a common subposet $\P_C$.

\begin{defn}\label{def: fiber}
Suppose there are rank-preserving inclusions of posets $\iota_A:\P_C\hookrightarrow \P_A$ and $\iota_B:\P_C\hookrightarrow\P_B$. The {\em fiber product poset} is the set
\[
\P_A\times_{\P_C} \P_B = \P_C \sqcup \left \{ a : a\in \P_A\setminus \iota_A(\P_C)\right \} \sqcup \left \{ b : b \in \P_B\setminus \iota_B(\P_C)\right \}
\]
with order relation 
\begin{itemize}
\item $a\geq c$ iff $a\geq \iota_A(c)$ for $a\in \P_A\setminus \iota_A(\P_C)$ and $c\in \P_C$ and
\item $b\geq c$ iff $b\geq \iota_B(c)$ for $b\in \P_B\setminus \iota_B(\P_C)$ and $c\in \P_C$.
\end{itemize}
\end{defn}

The wedge product of posets $\P_A$ and $\P_B$ can be recovered as $\P_A \vee \P_B = \P_A \times_{*} \P_B$, where $*$ is the singleton set with inclusions $\iota_A: *\to \P_A$ and $\iota_B: *\to \P_B$ mapping $*$ to the least element of $\P_A$ and $\P_B$, respectively.

\begin{ex}
If $\P_A$ and $\P_B$ are both path posets of length $2$ and $\P_C$ is a sub-path of length 1 containing the least element of  $\P_A$ and $\P_B$, 
the resulting fiber product is
\begin{center}
	\includegraphics[height=2.5cm]{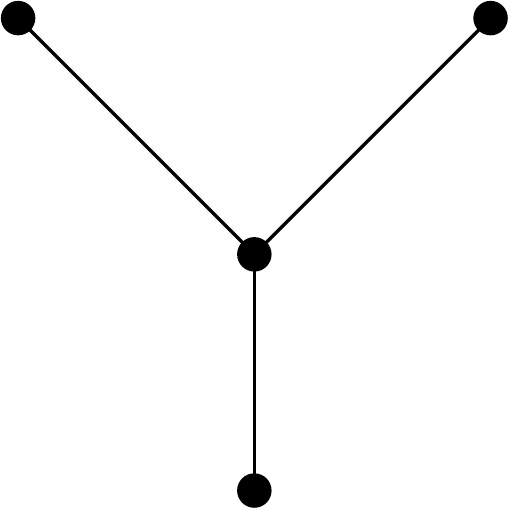}
\end{center}
\end{ex}

The next construction gives a more elaborate example of a fiber product poset.

\begin{defn}\label{def: heart}
Suppose $\P$ is an $a_0\times a_1$ Clements--Lindstr\"om poset and $\Q$ is a $b_0\times b_1$ Clements--Lindstr\"om poset, with a common sub-poset $\L$ that is a $c_0\times c_1$ Clements--Lindstr\"om poset where $c_i = \min\{a_i,b_i\}$. A {\bf heart-shaped poset} is the set
$$
\P \times_\L \Q,
$$
meaning that we take the fiber product of the two 2-dimensional Clements-Lindstr\"om posets over their intersection, with the partial order induced by the fiber product definition.
\end{defn}

The poset introduced in \Cref{def: heart} is depicted in \Cref{heart}, which indicates the origin of its name.
\begin{figure}[h]
\centering
\begin{tikzpicture}[
  roundnode/.style={circle, draw=black, fill=white, thick, minimum size=7mm},
]
    \node at (0,0) {1};
    \node at (-1,1) {x};
    \node at (1,1) {y};
    \node at (-2,2) {.};
    \node at (-2.5,2.5) {.};
    \node at (-3,3) {.};
    \node at (1.7,1.7) {.};
    \node at (2.2,2.2) {.};
    \node at (2.7,2.7) {.};
    \node at (-4,4) {$x^{a_0-1}$};
    \node at (-3.6,4.3) {.};
    \node at (-3.2,4.6) {.};
    \node at (-3.4,4.45) {.};
    \node at (3.5,3.5) {$y^{b_1-1}$};
    \node at (2.9,4) {.};
    \node at (3.1,3.8) {.};
    \node at (2.7,4.2) {.};
    \node at (0,3) {$x^{b_0-1}y^{a_1-1}$};
    \node at (1,3.5) {.};
    \node at (1.3,3.8) {.};
    \node at (1.6,4.1) {.};
    \node at (-1,4) {.};
    \node at (-1.5,4.5) {.};
    \node at (-0.5,3.5) {.};
    \node at (-2,5) {$x^{a_0-1}y^{a_1-1}$};
    \node at (2.5,4.5) {$x^{b_0-1}y^{b_1-1}$};

    \draw (-0.2,0.2) -- (-0.8,0.8);
    \draw (0.2,0.2) -- (0.8,0.8);
\end{tikzpicture}
    \caption{A heart-shaped poset}
    \label{heart}
\end{figure}

The name of the fiber product operation is justified by the following algebraic construction: given rings $A,B,C$ and homomorphisms $f:A\to C$ and $g:B\to C$, the fiber product ring is the set
\begin{equation}\label{eq: FP}
A\times_C B=\{(a,b) \mid a\in A, b\in B, f(a)=g(b)\}
\end{equation}
with operations defined componentwise.

The following examples are pertinent to the wedge product and the heart poset, respectively.
\begin{ex}[{\cite[Proposition 3.12]{IMS}}]\label{ex: FP wedge}
    For ideals $I\subset K[x_1,\ldots, x_n]$ and $J\subset K[y_1,\ldots, y_m]$ one has
    \[
    \frac{K[x_1,\ldots, x_n]}{I}\times_K \frac{K[y_1,\ldots, y_m]}{J}=\frac{K[x_1,\ldots, x_n, y_1, \ldots, y_m]}{I+J+(x_iy_j : 1\leq i\leq n, 1\leq j\leq m)}.
    \]
\end{ex}

\begin{ex}\label{ex: FP intersection}
   For a ring $R$ and ideals $I,J$ of $R$ one has
    \[
    \frac{R}{I}\times_{\frac{R}{I+J}} \frac{R}{J} = \frac{R}{I\cap J}.
    \]
    This follows by comparing the exact sequence characterizing the fiber product \cite[(3)]{IMS}
    \[
    0 \to  \frac{R}{I}\times_{\frac{R}{I+J}} \frac{R}{J} \to  \frac{R}{I}\oplus \frac{R}{J} \to  \frac{R}{I+J}\to 0
    \]
    to the standard short exact sequence
    \[
    0 \to  \frac{R}{I\cap J}  \to  \frac{R}{I}\oplus \frac{R}{J} \to  \frac{R}{I+J}\to 0.
    \]
\end{ex}

To explain the connection between the examples above and  poset operations we must first consider how maps of rings induce maps between monomial posets.

\begin{lem}\label{lem: M functor}
Suppose  $R=P/I$ and $S=P/J$ are quotients of a polynomial ring $P$ by monomial ideals $I\subseteq J$ and denote the respective monomial posets $\M_R$ and $\M_S$. The canonical surjection $\pi:R\to S$ gives rise to an injective and rank-preserving poset map $\iota:\M_S\to \M_R$.
\end{lem}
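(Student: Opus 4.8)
The plan is to construct the map $\iota$ explicitly on the level of monomials and then verify the three claimed properties (well-definedness, injectivity, rank-preservation) in turn. First I would recall that by the definition in \Cref{ex: monomial poset} and the remarks following it, a monomial of $S=P/J$ is the coset $\overline{\mu}$ of a monomial $\mu\in P$ with $\mu\notin J$, and similarly for $R=P/I$; since $I\subseteq J$, every monomial $\mu\in P$ that survives in $S$ (i.e.\ $\mu\notin J$) also survives in $R$ (i.e.\ $\mu\notin I$). This gives a natural candidate: send the class $\overline{\mu}\in\M_S$ to the class of the same representative monomial $\mu$ in $\M_R$. The key point making this well-defined is that because $I$ and $J$ are \emph{monomial} ideals, two monomials $\mu,\nu\in P$ have $\overline{\mu}=\overline{\nu}$ in $S$ if and only if $\mu=\nu$ as monomials in $P$ (a monomial ideal contains no nonzero difference of distinct monomials), so each element of $\M_S$ has a unique monomial representative in $P$, and likewise in $\M_R$. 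Thus $\iota$ is simply "take the unique $P$-monomial representative", and it is automatically a well-defined function.

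Next I would check injectivity and rank-preservation, both of which are immediate from this description: if $\iota(\overline{\mu})=\iota(\overline{\nu})$ in $\M_R$ then $\mu=\nu$ in $P$ by the unique-representative property applied to $R$, hence $\overline{\mu}=\overline{\nu}$ in $\M_S$; and the rank of $\overline{\mu}$ in either poset is $\deg\mu$, which does not depend on which quotient we are in, so $\iota$ preserves rank. The one genuinely substantive point is that $\iota$ is \emph{order-preserving}, i.e.\ a poset map: if $\overline{\mu}\leq\overline{\nu}$ in $\M_S$, meaning $\overline{\nu}=\overline{\mu}\cdot\overline{\theta}$ for some monomial $\overline{\theta}$ of $S$, I must show $\overline{\nu}=\overline{\mu}\cdot\overline{\theta}$ holds in $\M_R$ as well. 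Here I would again use that all ideals in sight are monomial: the relation $\overline{\nu}=\overline{\mu}\cdot\overline{\theta}$ in $S$ forces $\nu=\mu\theta$ as monomials in $P$ (by uniqueness of monomial representatives, since $\mu\theta$ is a monomial whose class equals that of $\nu$), and then $\theta\notin I$ because $\theta\mid\nu$ and $\nu\notin I$ (a monomial ideal is closed upward under divisibility, so no monomial dividing a survivor can lie in $I$); hence $\overline{\theta}$ is a genuine monomial of $R$ and the factorization $\overline{\nu}=\overline{\mu}\cdot\overline{\theta}$ witnesses $\iota(\overline{\mu})\leq\iota(\overline{\nu})$ in $\M_R$.

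I would then remark that $\iota$ is exactly the map induced by $\pi\colon R\to S$ in the sense that it is a section of the "obvious" comparison: $\pi$ sends a monomial of $R$ either to a monomial of $S$ or to $0$ (namely, $\mu\mapsto \overline{\mu}$ if $\mu\notin J$, and $\mu\mapsto 0$ if $\mu\in J$), and $\iota$ picks out, for each monomial of $S$, its preimage monomial in $R$; so $\pi\circ\iota=\mathrm{id}_{\M_S}$ on the nose. This identifies $\M_S$ with the sub-poset of $\M_R$ consisting of monomials not in $J$, with the induced order, which is the statement we want. The main obstacle is really just bookkeeping with the monomial-ideal hypothesis: every nontrivial step (unique representatives, survival of divisors, compatibility of the divisibility relations across the two quotients) relies on the fact that $I$ and $J$ are monomial, and I would be careful to invoke it each time rather than treating these as formal consequences of $I\subseteq J$ alone.
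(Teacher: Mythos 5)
Your proof is correct and takes essentially the same approach as the paper: both define $\iota$ by sending each monomial of $S$ to its unique monomial representative viewed in $R$, which exists since $I\subseteq J$ are monomial ideals. The paper leaves the verification of injectivity, rank-preservation, and order-preservation as ``easy to see,'' whereas you carry out these checks explicitly (correctly using that monomial ideals absorb multiples to show $\theta\notin I$); the content is the same.
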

\begin{proof}
    By the definition of $S$, every monomial $\overline{\mu}=\mu+I\in S$ is the image of a unique monomial $\mu=\mu+J\in R$ via $\pi$.  We thus set $\iota(\overline{\mu})=\mu$. It is easy to see that this map preserves ranks. If $\overline{a}, \overline{\mu}, \overline{b}$ are monomials in $S$ satisfying $\overline{a}=\overline{\mu} \overline{b}$, then $a=\mu b$ must hold  in $P$ as otherwise it must be the case that $a-\mu b\in J$ with $J$ a monomial ideal would imply $a, \mu b\in J$, leading to the contradiction $\overline{a}= 0$. Since $a=\mu b$  holds  in $P$, it also holds in $R$. We have thus proved that $\overline{b}\mid \overline{a}$ implies $\iota(\overline{b})\mid \iota(\overline{a})$, indicating that $\iota$ preserves the divisibility order.
\end{proof}

\begin{prop}\label{prop: FP}
\begin{enumerate}
\item If $A$ and $B$ are graded quotients of polynomial rings which have coefficient field $K$,  then
\[
\M_{A\times_K B}=\M_A \vee\M_B.
\]

\item If $I, J$ are  monomial ideals of a polynomial ring $R$,  then
\[
\M_{R/I\times_{R/(I+J)}R/J}=\M_{R/(I\cap J)}=\M_{R/I} \times _{\M_{R/(I+J)}} \M_{R/J}.
\]
\end{enumerate}
\end{prop}

\begin{proof}
Set $A=R/I$ and $B=S/J$. From the presentation of $A\times_K B$ in \Cref{ex: FP wedge} we observe that the monomials of 
$A\times_K B$ are 1, the non-constant monomials of $A$ and the non-constant monomials of $B$, with divisibility relations induced from the posets $\M_A$ and $\M_B$. Thus the first claim follows from \Cref{def: fiber}.

The first equality in assertion (2) follows from \Cref{ex: FP intersection}. Now 
\[
\M_{R/(I\cap J)}=\{m \text{ monomial in } R \mid m\not \in I\cap J\}=\{m \text{ monomial in } R \mid m\not \in I \text{ or } m \not\in J\}.
\]
By \Cref{def: fiber} we have
\begin{eqnarray*}
\M_{R/I} \times _{\M_{R/(I+J)}} \M_{R/J} &=& \left( \M_{R/I} \setminus \iota( \M_{R/(I+J)}) \right) \sqcup  \left( \M_{R/J} \setminus \iota( \M_{R/(I+J)}) \right) \sqcup \M_{R/(I+J)} \\
&=& \{m \text{ monomial in } R \mid m\in J\setminus I\} \\
& & \sqcup \{m \text{ monomial in } R \mid m\in I\setminus J\}\\
& & \sqcup  \{m \text{ monomial in } R \mid m \not \in I+J\} \\
&= &\{m \text{ monomial in } R \mid m\not \in I \text{ or } m \not\in J\} \\ 
&= &\M_{R/(I\cap J)}
\end{eqnarray*}
The third equality above follows since $I, J$ are monomial ideals, therefore the monomials in $I+J$ are the monomials in $I\cup J$. Having identified the second and third posets in claim (2) with the same sub-poset of $\M_R$, the claim is established. 
\end{proof}

From part (1) of \Cref{prop: FP} it follows that the monomial poset of the ring in \Cref{ex: FP wedge} is the wedge product of the monomial posets of the factors. We apply  \Cref{prop: FP} in  settings where the monomial posets involved are Clements--Lindstr\"om posets in \Cref{introthm D} to describe the rings corresponding to wedge products and heart-shaped posets.

\section{The Macaulay property for disjoint unions, wedge, and diamond products}\label{s: uwd}

We now begin our study of how the Macaulay property of a poset transfers under the poset operations introduced in the previous section. 

\subsection{The relationship between disjoint union, wedge and diamond product}
In this section we show that one can study three poset operations with respect to the Macaulay property simultaneously: disjoint union, wedge product and diamond product. The principle which makes this possible is the main result of this subsection, \Cref{prop:equiv union wedge diamond}.

We will use the following notation in the rest of the document:

\begin{notation}
    Let $\P$ be a poset. When applying $\underline{\P}$, we assume $\P$ has a unique smallest element. When applying $\overline{\P}$, we assume $\P$ has a unique largest element.  
    \begin{itemize}[itemsep=1mm]
        \item \(\underline{\P}\) := the poset obtained after removing the smallest element in \(\P\).
        \item \(\overline{\P}\) := the poset obtained after removing the largest element in \(\P\).
        \item \(\widehat\P\) := the poset obtained by adding a  largest element to \(\P\).
        \item \(\uwidehat\P\) := the poset obtained by adding a smallest element to \(\P\).
    \end{itemize}
\end{notation}

The following observations relate the notations described above. 

\begin{lem}\label{lem: hat overline}
\begin{enumerate}
\item\label{prop: hat overline} If $\P$ is a poset, then $\P \cong \overline{\widehat\P}$. If $\P$ is a poset with a unique maximal element, then $\P \cong \widehat{\overline\P}$. 
\item\label{prop: diamond hat wedge}  Let $n\in\mathbb{N}$. For each $i\in\{1, \dots, n\}$, let $\P_i$ be a poset with a unique minimal element. Then $\Diamond_{i=1}^n \widehat P_i \cong \widehat{\bigvee_{i=1}^n \P_i}$.

\end{enumerate}
\end{lem}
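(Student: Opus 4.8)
Both halves follow by directly unwinding the definitions of the operations $\widehat{\,\cdot\,}$, $\overline{\,\cdot\,}$, $\vee$ and $\Diamond$ and writing down explicit order isomorphisms; the whole argument is bookkeeping of identified extremal elements, with no genuine idea required. For part (1), I would argue as follows. For an arbitrary poset $\P$, the element $\top$ adjoined to form $\widehat{\P}$ satisfies $x\le\top$ for every $x$ and lies below nothing, so it is the unique largest element of $\widehat{\P}$; hence $\overline{\widehat{\P}}$ deletes exactly $\top$, and the order induced on the remaining elements is the original order of $\P$, giving $\overline{\widehat{\P}}=\P$. For the second isomorphism, suppose $\P$ has largest element $L$, so $\overline{\P}=\P\setminus\{L\}$ with its induced order; let $\top$ be the element adjoined in forming $\widehat{\overline{\P}}$ and define $f\colon\widehat{\overline{\P}}\to\P$ by $f(\top)=L$ and $f(x)=x$ otherwise. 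Since $L\notin\overline{\P}$ this is a bijection, and because $\top$ dominates every element of $\widehat{\overline{\P}}$ while $L$ dominates every element of $\P$, and the two orders agree on $\P\setminus\{L\}$, $f$ both preserves and reflects the order.

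For part (2), first I would record the two ground sets. Writing $\widehat{\P}_i=\P_i\sqcup\{\top_i\}$ with $\top_i$ a freshly adjoined maximum, the diamond product $\Diamond_{i=1}^n\widehat{\P}_i$ identifies all the $\ell_i$ to one element $\ell$ and all the $\top_i$ to one element $\top$, so its ground set is
\[
\{\ell\}\ \sqcup\ \bigsqcup_{i=1}^n\bigl(\P_i\setminus\{\ell_i\}\bigr)\ \sqcup\ \{\top\},
\]
with $x\le y$ iff $x\le y$ in $\widehat{\P}_i$ for some $i$ (reading $\ell$ as $\ell_i$ and $\top$ as $\top_i$). On the other side, $\bigvee_{i=1}^n\P_i$ has ground set $\{\ell\}\sqcup\bigsqcup_{i=1}^n(\P_i\setminus\{\ell_i\})$ with $x\le y$ iff $x\le y$ in $\P_i$ for some $i$, and $\widehat{\bigvee_{i=1}^n\P_i}$ adjoins a maximum $\top$; so the two ground sets coincide, and I take $f$ to be the identity on this common set. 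To verify $f$ is an order isomorphism, I would compare the relations in three cases. If $y=\top$, then $x\le y$ holds on both sides for every $x$ (on the wedge side because $\top$ is the adjoined maximum; on the diamond side because $\top$ represents each $\top_i$, the maximum of $\widehat{\P}_i$, and every element lies in some $\widehat{\P}_i$). If $x=\top$ and $y\ne\top$, then $x\le y$ fails on both sides. If $x,y\ne\top$, then $x\le y$ in $\widehat{\bigvee_{i=1}^n\P_i}$ iff $x\le y$ in $\bigvee_{i=1}^n\P_i$, i.e. iff $x$ and $y$ lie in a common $\P_j$ (with $\ell$ read as $\ell_j$) and $x\le y$ there; while $x\le y$ in $\Diamond_{i=1}^n\widehat{\P}_i$ iff $x\le y$ in some $\widehat{\P}_j$, which---since $y\ne\top$ forces $y\in\P_j$ and hence $x\in\P_j$, and the order of $\widehat{\P}_j$ restricts to that of $\P_j$ on $\P_j$---is the same condition. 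Hence $f$ is a poset isomorphism.

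I do not expect a real obstacle here; the one point that needs care is the consistent treatment of the glued extremal elements, namely regarding $\ell$ as an element of every $\P_i$ and $\top$ as an element of every $\widehat{\P}_i$, so that the ``$\le$ in some component'' clauses in the definitions of $\vee$ and $\Diamond$ are applied correctly. It is also worth remarking at the outset that $\widehat{\P}_i$ does carry the unique minimal element $\ell_i$ and unique maximal element $\top_i$ needed in order to form the diamond product (interpreted through its ground-set-and-order description).
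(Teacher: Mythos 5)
Your proof is correct and is exactly the ``straightforward verification'' that the paper explicitly omits (the paper only illustrates part (2) with a figure): you unwind the definitions, match the ground sets, and check the order relation case by case, with appropriate care about the glued extremal elements. No discrepancies with the paper's intent.
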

The proof of the lemma is omitted as it allows for a straightforward verification. The proof of item (2) is illustrated by \Cref{fig: lem 3.2}.
\begin{figure}[h!]
\centering
    \includegraphics[width=0.8\textwidth]{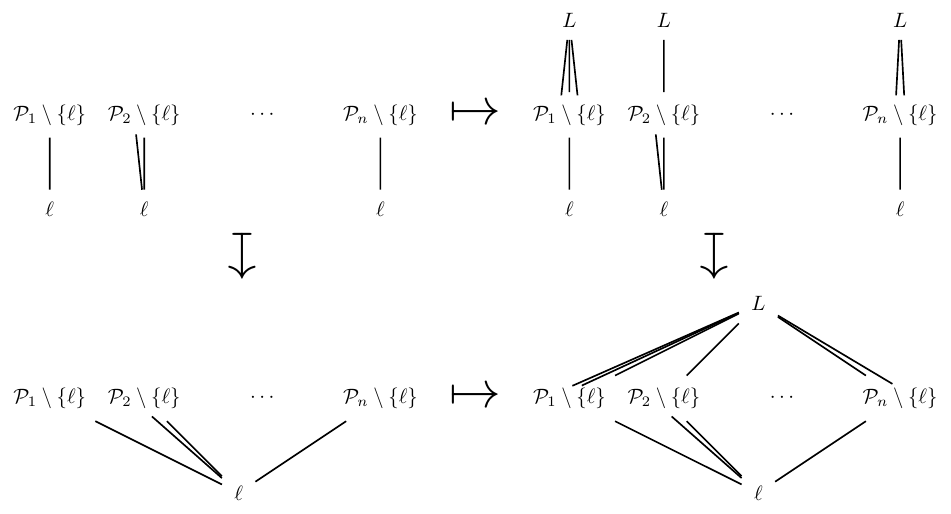}
    \caption{ An illustration of \Cref{lem: hat overline}}
    \label{fig: lem 3.2}
\end{figure}

\begin{prop}\label{prop: hat preserves Macaulay}
    Let $r\in\mathbb{N}$, and let $\P$ be a ranked poset in which every maximal element has rank $r$. Then $\P$ is Macaulay if and only if $\widehat\P$ is Macaulay.
\end{prop}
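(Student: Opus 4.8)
The plan is to exploit the near-inverse relationship between $\widehat{\,\cdot\,}$ and $\overline{\,\cdot\,}$ recorded in \Cref{lem: hat overline}(1), together with a direct analysis of how upper shadows behave when a single new maximum is glued on top. Since $\P \cong \overline{\widehat\P}$ always, and $\widehat\P$ visibly has a unique maximal element so $\widehat\P \cong \widehat{\overline{\widehat\P}}$, it suffices to prove a statement of the form: for $\P$ ranked with all maximal elements of rank $r$, a total order witnessing the Macaulay property for $\P$ can be extended to one witnessing it for $\widehat\P$, and conversely a total order witnessing it for $\widehat\P$ restricts to one for $\P$. The new element, call it $L$, lives in level $r+1$, which is a new top level of size $1$.

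First I would set up the forward direction. Suppose $\P$ is Macaulay with witnessing total order $\O$ on $\P$; define $\O'$ on $\widehat\P = \P \sqcup \{L\}$ by keeping $\O$ on $\P$ and declaring $L$ to be (vacuously) the unique element of level $r+1$. For a subset $A \subseteq (\widehat\P)_{[d]}$ with $d \le r-1$, nothing changes: $\uSdw_{\widehat\P} A = \uSdw_\P A$ because $L$ only covers elements of rank $r$, so conditions (1) and (2) are inherited verbatim from $\P$. For $d = r$, any nonempty $A \subseteq \P_{[r]}$ has $\uSdw_{\widehat\P} A = \{L\}$ (every maximal element of $\P$ has rank $r$, hence is covered by nothing in $\P$ and by $L$ in $\widehat\P$; here I should double-check that \emph{every} rank-$r$ element of $\P$ is maximal, which holds since a rank-$r$ element covered by something would force an element of rank $r+1$ in $\P$, contradicting the hypothesis). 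Thus $|\uSdw_{\widehat\P}\Seg_r|A|| = 1 = |\uSdw_{\widehat\P} A|$ whenever $|A| \ge 1$, and $\{L\} = (\widehat\P)_{[r+1]} = \Seg_{r+1} 1$, so (1) and (2) hold at level $r$. For $d = r+1$ there is nothing to check since $(\widehat\P)_{[r+1]}$ is a single element with empty upper shadow. Hence $\widehat\P$ is Macaulay.

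For the converse, suppose $\widehat\P$ is Macaulay with witnessing order $\O'$, and let $\O$ be its restriction to $\P = \overline{\widehat\P}$. Now I must check conditions (1) and (2) for $A \subseteq \P_{[d]}$. Again for $d \le r-1$ the upper shadows in $\P$ and $\widehat\P$ agree, so both conditions transfer directly. For $d = r$, $\uSdw_\P A = \varnothing$ for every $A \subseteq \P_{[r]}$ (all such elements are maximal in $\P$), so $|\uSdw_\P \Seg_r|A|| = 0 = |\uSdw_\P A|$ and $\uSdw_\P \Seg_r|A| = \varnothing = \Seg_{r+1} 0$ trivially. Thus $\P$ is Macaulay. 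The one genuine subtlety to handle carefully — and the step I expect to be the main obstacle — is confirming that the Macaulay \emph{definition} only constrains pairs of consecutive levels via upper shadows, so that inserting or deleting a top level that is a singleton with trivial shadow behavior cannot break anything; in particular I want to make sure there is no hidden global compatibility condition on $\O$ (e.g. across non-consecutive levels) in the definition used here. Reading \Cref{def: lex}'s surrounding definition of Macaulay poset, the conditions are stated purely in terms of $\uSdw_\P$ between level $d$ and level $d+1$ for each $d$, so this concern dissolves, and the two directions above complete the proof.
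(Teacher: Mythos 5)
Your proof is correct and follows essentially the same route as the paper: extend the witnessing order by placing the new top element $L$ above everything (its position is forced anyway, being the sole element of level $r+1$), observe that shadows at levels $d<r$ are unchanged, and note that at level $r$ every nonempty subset has upper shadow exactly $\{L\}$ in $\widehat\P$ (respectively $\varnothing$ in $\P$), so both Macaulay conditions are trivially satisfied there. The paper dispatches the direction ``$\widehat\P$ Macaulay $\Rightarrow$ $\P$ Macaulay'' by citing \cite[Proposition 6]{BL} and writes out only the converse, but the underlying argument is identical to yours.
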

\begin{proof}
For the backward direction consult \cite[Proposition 6]{BL}.
Conversely, suppose $<_\P$ is a total ordering of $\P$ with respect to which $\P$ is Macaulay. Let $L$ be the maximal element of $\widehat\P$. Let $<_{\widehat\P}$ be the total ordering of $\widehat\P$ such that, for $p,q\in\P$, we have $p <_\P q$ if and only if $p <_{\widehat\P} q$, and for $p\in\P$, we have $p<L$. Then for any $d<r$ and $A\subseteq \widehat\P_{[d]}$,
\begin{align*}
\lvert \uSdw_{\widehat\P} \Seg_d \lvert A \rvert \rvert &= \lvert \uSdw_\P \Seg_d \lvert A \rvert \rvert \leq \lvert \uSdw_\P(A) \rvert 
= \lvert \uSdw_{\widehat\P}(A) \rvert \\
\uSdw_{\widehat\P}\Seg_d \lvert A\rvert &= \uSdw_\P\Seg_d \lvert A\rvert 
= \Seg_{d+1}\lvert\uSdw_\P(A)\rvert 
= \Seg_{d+1}\lvert\uSdw_{\widehat\P}(A)\rvert.
\end{align*}
If $A \subseteq \widehat\P_{[r + 1]}$, then
\begin{align*}
\lvert \uSdw_{\widehat\P} \Seg_{r + 1} \lvert A \rvert \rvert &= \lvert \varnothing \rvert 
= \lvert \uSdw_{\widehat\P} (A)\rvert \\
\uSdw_{\widehat\P} \Seg_{r + 1} \lvert A \rvert &= \varnothing 
= \Seg_{r+2}\lvert\uSdw_{\widehat\P}(A)\rvert.
\end{align*}
If $\varnothing\neq A\subseteq\widehat\P_{[r]}$, then
\begin{align*}
\lvert\uSdw_{\widehat\P} \Seg_r\lvert A\rvert \rvert &= \lvert\{q\}\rvert 
= \lvert\uSdw_{\widehat\P}(A)\rvert \\ 
\uSdw_{\widehat\P}\Seg_r \lvert A\rvert &= \{q\} = \Seg_{r+1}\lvert\uSdw_{\widehat\P}(A)\rvert.
\end{align*}
We have
\begin{align*}
\lvert\uSdw_{\widehat\P} \Seg_r\lvert\varnothing\rvert \rvert &= \lvert\varnothing\rvert 
= \lvert\uSdw_{\widehat\P}(\varnothing)\rvert\ \\
\uSdw_{\widehat\P}\Seg_r \lvert \varnothing\rvert &= \varnothing 
= \Seg_{r+1}\lvert\uSdw_{\widehat\P}(\varnothing)\rvert.
\end{align*}
Hence $\widehat\P$ is Macaulay.
\end{proof}

\begin{thm}
    \label{prop:equiv union wedge diamond}
Let $\P_1, \ldots, \P_n$ be ranked posets.   
 Whenever the respective constructions are defined, the following are equivalent:
\begin{enumerate}[itemsep=1mm]
    \item $\bigsqcup_{i=1}^n \underline{\P_i}$ is Macaulay, \
    \item $\bigvee_{i=1}^n \P_i$ is Macaulay, \
    \item $\dia_{i=1}^n \widehat{\P_i}$ is Macaulay.
\end{enumerate}
Moreover, the list of properties below are related by implications $(1')\Rightarrow (2')\Rightarrow (3')$
\begin{enumerate}
    \item[(1')] $\bigsqcup_{i=1}^n \P_i$ is Macaulay,
    \item[(2')] $\bigvee_{i=1}^n \P_i$ is Macaulay,
    \item[(3')] $\dia_{i=1}^n \P_i$ is Macaulay.
\end{enumerate}

\end{thm}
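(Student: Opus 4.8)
The plan is to reduce everything to \Cref{lem: hat overline} and \Cref{prop: hat preserves Macaulay}, so that the only genuinely new content is the chain of implications $(1')\Rightarrow(2')\Rightarrow(3')$; the equivalence of $(1),(2),(3)$ will then fall out as a special case of that chain combined with the ``hat/underline'' bookkeeping. Concretely, I would first observe that for any poset $\P$ with a unique minimal element, $\bigvee_{i=1}^n\P_i$ is obtained from $\bigsqcup_{i=1}^n\underline{\P_i}$ by adjoining a single common minimal element; that is, $\bigvee_{i=1}^n\P_i\cong\uwidehat{\bigl(\bigsqcup_{i=1}^n\underline{\P_i}\bigr)}$. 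Dualizing \Cref{prop: hat preserves Macaulay} (a poset is Macaulay iff its order dual is, and $\widehat{(-)}$ on the dual corresponds to $\uwidehat{(-)}$), adjoining a unique minimum to a ranked poset all of whose minimal elements have rank $0$ preserves the Macaulay property in both directions. This gives $(1)\Leftrightarrow(2)$. For $(2)\Leftrightarrow(3)$, apply \Cref{lem: hat overline}(2): $\dia_{i=1}^n\widehat{\P_i}\cong\widehat{\bigvee_{i=1}^n\P_i}$, and now \Cref{prop: hat preserves Macaulay} applies directly (the wedge product of ranked posets with common rank $r$ again has all maximal elements of rank $r$), yielding $(2)\Leftrightarrow(3)$.

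Next I would establish $(1')\Rightarrow(2')$: if $\bigsqcup_{i=1}^n\P_i$ is Macaulay we want $\bigvee_{i=1}^n\P_i$ Macaulay. The point is that $\bigvee_{i=1}^n\P_i$ is obtained from $\bigsqcup_{i=1}^n\P_i$ by collapsing the $n$ minimal elements $\ell_1,\dots,\ell_n$ (all of rank $0$) into one. For levels $d\geq 1$ the two posets have identical level sets and identical upper and lower shadow maps among those levels, so a total order witnessing the Macaulay property for $\bigsqcup$ restricts to one for $\bigvee$ on all levels $d\geq 1$; at level $0$ the wedge has a single element whose upper shadow is the union $\bigcup_i\uSdw_{\P_i}\{\ell_i\}$, and both Macaulay conditions are vacuous or automatic there since $\Seg_0|A|$ is all of level $0$ whenever $A\neq\varnothing$. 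One must check condition (2) of the Macaulay definition does not break at the $0\to 1$ transition — but $\Seg_1$ of the full first level is the whole first level, so this is immediate. Then $(2')\Rightarrow(3')$ follows by exactly the same argument applied at the top: $\dia_{i=1}^n\P_i$ is obtained from $\bigvee_{i=1}^n\P_i$ by collapsing the $n$ maximal elements into one, which only affects level $r$ and the map from level $r-1$, where again the relevant initial segments are the full levels and both conditions are automatic. (Equivalently, pass to order duals and reuse $(1')\Rightarrow(2')$.)

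Finally I would tie the two halves together: $(1)\Leftrightarrow(2)\Leftrightarrow(3)$ can also be seen as the ``primed'' chain applied to $\underline{\P_i}$ and $\widehat{\P_i}$ via \Cref{lem: hat overline}, so the write-up can be organized to prove the primed chain once and then specialize. The step I expect to be the main obstacle is verifying condition (2) of the Macaulay definition (``the upper shadow of an initial segment is an initial segment'') at precisely the levels affected by the collapse — i.e. making sure that when we identify the minimal elements (or maximal elements), the chosen total order on the adjacent level still makes $\uSdw_{\widehat\P}\Seg$ an initial segment. This is the same subtlety already handled carefully in the proof of \Cref{prop: hat preserves Macaulay}, so the cleanest route is to phrase $(1')\Rightarrow(2')$ and $(2')\Rightarrow(3')$ as corollaries of that proposition applied to the poset with its minimum (resp. maximum) removed, rather than re-deriving the shadow computations by hand.
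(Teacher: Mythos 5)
Your proof is correct and follows essentially the same route as the paper: the unprimed equivalences are reduced to \Cref{lem: hat overline} together with \Cref{prop: hat preserves Macaulay} and its dual, and the primed implications are handled by directly checking that the Macaulay conditions at the collapsed bottom (resp.\ top) level are automatic, which is exactly how the paper treats $(1')\Rightarrow(1)$ and $(3)\Rightarrow(3')$. The only caution is your parenthetical suggestion to get $(2')\Rightarrow(3')$ by passing to order duals, which would require the self-duality of the Macaulay property (true but not established here); your primary direct verification does not need it.
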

\begin{proof}
(3)$\Leftrightarrow$(2) Observe that $\Diamond_{i=1}^n \widehat{\P_i}$ is Macaulay if and only if  $\overline{\Diamond_{i=1}^n \widehat{\P_i}}$ is Macaulay by \Cref{prop: hat preserves Macaulay} since the two posets are related in view of \Cref{lem: hat overline} (1) by
\[
\Diamond_{i=1}^n \widehat{\P_i}\cong \widehat{\overline{\Diamond_{i=1}^n \widehat{\P_i}}}.
\] The isomorphism $\overline{\Diamond_{i=1}^n \widehat{P_i}} \cong \bigvee_{i=1}^n \P_i$, which is a consequence of \Cref{lem: hat overline} (2) completes the argument.

(2)$\Leftrightarrow$(1) By the dual of \Cref{prop: hat preserves Macaulay}, the posets $\bigvee_{i=1}^n \P_i$  and $\underline{\bigvee_{i=1}^n \P_i}$ are simultaneously  Macaulay.
It remains to observe that $\underline{\bigvee_{i=1}^n \P_i} \cong \bigsqcup_{i=1}^n \underline{\P_i}$.

Since there is an isomorphism $\underline{\bigsqcup_{i=1}^n \P_i}\cong  \bigsqcup_{i=1}^n \underline{\P_i}$, we see that $(1')\Rightarrow(1)\Leftrightarrow (2)=(2')$.
We also see that $(2')=(2)\Leftrightarrow(1)\Leftrightarrow (3)$. Finally, we claim that $(3)\Rightarrow (3')$. Towards this end, note that $\overline{\Diamond_{i=1}^n \widehat{\P_i}}$ and $\Diamond_{i=1}^n \P_i$ differ only in the top level. Assuming the poset $\Diamond_{i=1}^n \widehat{\P_i}$ is Macaulay, endow the poset $\Diamond_{i=1}^n \P_i$ with the total obtained by identifying the elements that are not part of its top level with elements of the former poset, extended arbitrarily to incorporate the unique largest element. To check that $\Diamond_{i=1}^n \P_i$ is a Macaulay poset one only needs to check the shadows of elements in level $d-1$, where $d$ is the top rank  of $\Diamond_{i=1}^n \P_i$. But all nonempty sets of elements of rank $d-1$ have an upper shadow of size one in this poset. This concludes the proof.
\end{proof}

\begin{ex}\label{prod of paths}
Since  spider posets are Macaulay by \cite{BE}, it follows using \Cref{prop:equiv union wedge diamond} that diamond products of paths of the same length are Macaulay.
\end{ex}

One should note that the implications $(1')\Rightarrow (2')$ and $(2')\Rightarrow (3')$ in \Cref{prop: hat preserves Macaulay} cannot be reversed.

\begin{ex}
    If $\P$ is the wedge product of a length 1 path with a length 2 path, then $\P\lor\P$ is Macaulay and $\P\sqcup\P$ is not Macaulay. \end{ex}

\begin{ex}\label{ex: diamond not wedge}
    If $\P$ is the $2\times 2$ Clements-Lindstr\"om poset, then $\widehat\P\di\uwidehat\P$ is Macaulay with respect to the union simplicial order (see \Cref{defn: union simplicial order}), but $\widehat\P\lor\uwidehat\P$ is not Macaulay.
\end{ex}

In the next subsection we give conditions which ensure that the posets $\sqcup_{i=1}^n \P_i, \vee_{i=1}^n \P_i$ and $\Diamond_{i=1}^n\P_i$ are simultaneously Macaulay. Specifically,  we will see  that if the $\P_i$ are isomorphic additive posets, 
then the converse holds. See \Cref{thm: equiv union wedge diamond} for a detailed statement.

\subsection{Additivity and iterated poset operations}

       The following definition makes the disjoint union of two posets into a new poset.
        
	\begin{defn}\label{defn: union simplicial order}
		Let $\mathcal{P}_1$ and $\mathcal{P}_2$ be two ranked posets  with total order $\leq_1$ and $\leq_2$.
		Let $\mathcal{Q}$ be the  disjoint union poset of $\mathcal{P}_1$ and $\mathcal{P}_2$.
	Endow $\mathcal{Q}$ with the {\bf union simplicial order} $\leq_{\text{us}}$ such that:
		\begin{enumerate}
			\item If $a,b\in \mathcal{Q}$ such that the rank of $a$ is smaller than the rank of $b$,
			then $a\leq_{\text{us}} b$.
			\item If $a\in \mathcal{P}_1$ and $b\in \mathcal{P}_2$ such that $a$ and $b$ have the same rank,
			then $a \leq_{\text{us}} b$.
			\item If $a,b\in \mathcal{P}_1$ such that $a$ and $b$ have the same rank and $a\leq_1 b$,
			then $a \leq_{\text{us}} b$.
			\item If $a,b\in \mathcal{P}_2$ such that $a$ and $b$ have the same rank and $a\leq_2 b$,
			then $a \leq_{\text{us}} b$.
		\end{enumerate}
		Notice that if the respective operations are defined, the union simplicial order induces a total order on $\mathcal{P}_1 \vee \mathcal{P}_2$ and on $\mathcal{P}_1 \diamond \mathcal{P}_2$, which we refer to as the union simplicial order as well.
	\end{defn}
	
	\begin{rem}
    The union simplicial order on $\bigsqcup_{i=1}^n \P_i$, $\bigvee_{i=1}^n \P_i$, or $\Diamond_{i=1}^n \P_i$  induced by total orders on $\P_1,\P_2,\dots,\P_n$ is defined inductively based on \Cref{defn: union simplicial order}.
\end{rem}

The following proposition, in a form that allows taking the disjoint union of an arbitrary number of copies of a poset is due to Clements \cite[Theorem 1]{Clements} who calls the disjoin union a {\em copy poset}. In the cited paper, Clements attributes the  last statement to personal communication with Bezrukov, without proof. We include a proof  here for completeness. 

\begin{prop}[Clements, {\cite[Theorem 1]{Clements}}]\label{prop: disjoint union of mac posets}
	Let $\P$ be an additive Macaulay poset that satisfies $\uSdw(\P_{[r]})=\P_{[r+1]}$ for all integers $r\geq 0$ and let $\P_1,\ldots, \P_n$ denote  posets isomorphic to $\P$. 	Then  $\P_1\sqcup\ldots \sqcup \P_n$ is Macaulay with respect to the union simplicial order.
    
  Conversely, if $\P_1,\P_2$ are posets isomorphic to $\P$ such that $\P_1\sqcup \P_2$  is Macaulay with the union simplicial order, then $\P$ is additive. 
\end{prop}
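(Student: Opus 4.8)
Since the first assertion is the cited result of Clements, the plan is to prove only the converse: assuming that $\P_1\sqcup\P_2$ is Macaulay with respect to the union simplicial order built from two copies of $\P$, each carrying the total order $\O$, I want to conclude that $(\P,\O)$ is additive. Fix a rank $d$, put $N_d=|\P_{[d]}|$, and for $0\le m\le N_d$ set $s_d(m)=|\uSdw_\P\Seg_d m|$, so that $s_d(0)=0$. The first step is to rewrite additivity entirely in terms of the functions $s_d$. If $B$ is a rank-$d$ segment and $B^+$ is the set of rank-$d$ elements above every element of $B$, then $B^+$ and $B\cup B^+$ are both upward closed in the total order on $\P_{[d]}$, hence are initial segments, of sizes $|B^+|$ and $|B|+|B^+|$; since $\uSdw_\P(B\cup B^+)=\uSdw_\P(B)\cup\uSdw_\P(B^+)$ is the disjoint union of $\uSdw_\P(B^+)$ and $\uSdw_\P(B)\setminus\uSdw_\P(B^+)=\uSdw_{\text{new}}(B)$, we get $|\uSdw_{\text{new}}(B)|=s_d(|B|+|B^+|)-s_d(|B^+|)$. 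Specializing $B$ to an initial segment (so $B^+=\varnothing$) and then to a final segment (so $B\cup B^+=\P_{[d]}$), one sees that $(\P,\O)$ is additive exactly when, for every $d$ and all integers $q,j\ge 0$ with $q+j\le N_d$, both $s_d(q+j)\le s_d(q)+s_d(j)$ and $s_d(q+j)+s_d(N_d-q)\ge s_d(j)+s_d(N_d)$ hold.

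The second step is to describe the initial segments of $\P_1\sqcup\P_2$. In the union simplicial order the rank-$d$ level has $(\P_1)_{[d]}$ below $(\P_2)_{[d]}$, every element of the second block above every element of the first, and each block ordered by $\O$; since $\P_1$ and $\P_2$ are disjoint components, the upper shadow of a rank-$d$ set is the disjoint union of its shadows in the two copies. Writing $\Seg^{\P_i}_d m$ for the initial segment of size $m$ inside the level $(\P_i)_{[d]}$, it follows that for $0\le m\le N_d$ the size-$m$ initial segment of $(\P_1\sqcup\P_2)_{[d]}$ is $\Seg^{\P_2}_d m$, with upper shadow of size $s_d(m)$, and for $0\le j\le N_d$ the size-$(N_d+j)$ initial segment is $(\P_2)_{[d]}\sqcup\Seg^{\P_1}_d j$, with upper shadow of size $s_d(N_d)+s_d(j)$. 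I would also note that restricting the two Macaulay conditions for $\P_1\sqcup\P_2$ to subsets of the single level $(\P_2)_{[d]}$ returns the Macaulay conditions for $\P\cong\P_2$ with the order $\O$, so that the assertion that $(\P,\O)$ is additive is meaningful in the first place.

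The last step reads off the two inequalities from the first part of the Macaulay property for $\P_1\sqcup\P_2$ (initial segments have the smallest upper shadows), applied to two well-chosen test sets. For the first inequality, given $q+j\le N_d$, apply it to $S=\Seg^{\P_1}_d q\sqcup\Seg^{\P_2}_d j$: this set has size $q+j$ and upper shadow of size $s_d(q)+s_d(j)$, while the size-$(q+j)$ initial segment has upper shadow of size $s_d(q+j)$, so $s_d(q+j)\le s_d(q)+s_d(j)$. For the second, apply it to $S=\Seg^{\P_1}_d(q+j)\sqcup\Seg^{\P_2}_d(N_d-q)$: this set has size $N_d+j$ and upper shadow of size $s_d(q+j)+s_d(N_d-q)$, while the size-$(N_d+j)$ initial segment has upper shadow of size $s_d(N_d)+s_d(j)$, so $s_d(N_d)+s_d(j)\le s_d(q+j)+s_d(N_d-q)$. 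Since $d$ was arbitrary, $\P$ is additive.

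I expect the only delicate point to be the reformulation in the first step: one has to observe that a segment together with all rank-$d$ elements above it is an initial segment, which is what turns $\uSdw_{\text{new}}$ into a difference of two values of $s_d$, and then keep the sizes straight so that the test sets in the last step are genuinely initial segments of the stated sizes — in particular that all sizes invoked stay within $[0,N_d]$ and $[N_d,2N_d]$, the ranges for which the description of the initial segments of $\P_1\sqcup\P_2$ was derived. The degenerate cases — $q$ or $j$ equal to $0$, and the top rank, where every upper shadow is empty and each inequality reads $0\le 0$ — are immediate.
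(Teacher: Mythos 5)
Your proof of the converse is correct and follows essentially the same route as the paper: both arguments compare a two-component subset of a level of $\P_1\sqcup\P_2$ against the union simplicial initial segment of the same size and read off the two additivity inequalities, and your test sets $\Seg^{\P_1}_d q\sqcup\Seg^{\P_2}_d j$ and $\Seg^{\P_1}_d(q+j)\sqcup\Seg^{\P_2}_d(N_d-q)$ coincide with the sets $B_1\sqcup B_2$ the paper compares against in its two cases. Your rewriting of $|\uSdw_{\text{new}}|$ as a difference of values of $s_d$ is a notational repackaging rather than a different method, though it does render the second case more transparently than the paper's description of the set $C$ there; like the paper, you correctly defer the forward direction to Clements.
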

\begin{proof}
  We show that if $\P_1\sqcup \P_2$ is Macaulay with the union simplicial order then $\P$ must be additive.  Let $S_2$ be a segment and let $S_1$ be an initial segment of the same size in $\P$. Let $B_1 = S_1$ be an initial segment in $\P_1$, Let $B_2$ be the initial segment that consists of all elements that come after $S_2$ in $\P_2$. Let $C = (B_1 \setminus S_1) \sqcup (B_2 \cup S_2)$. Since $\P_1 \sqcup \P_2$ is Macaulay, $|\uSdw(C)| = |\uSdw(B_1\sqcup B_2)| + |\uSdw_{\text{new}}(S_2)| - |\uSdw_{\text{new}}(S_1)| \leq |\uSdw(B_1 \sqcup B_2)|$. It follows that $|\uSdw_{\text{new}}(S_1)| \geq |\uSdw_{\text{new}}(S_2)|$. This shows the first condition required for additivity. 
 
 Now, let $S_1$ be any segment and $S_2$ be a final segment in $\P$, with $B_1 $  a final segment that consists of all elements that come before $S_1$ in $\P_1$. Let $B_2$=$S_2$ be the final segment in $\P_2$. But then again if we construct C in the same way as above, we have $|\uSdw(C)| = |\uSdw(B_1\sqcup B_2)| + |\uSdw_{\text{new}}(S_2)| - |\uSdw_{\text{new}}(S_1)| \leq |\uSdw(B_1 \sqcup B_2)|$, and again it follows that $|\uSdw_{\text{new}}(S_1)| \geq |\uSdw_{\text{new}}(S_2)|$. This shows the second condition for being additive and completes the proof that $\P$ is additive.
\end{proof}

Summing everything up, we arrive at the equivalence of seven statements listed in \Cref{thm: equiv union wedge diamond}. The following diagram is used to prove that these statements are equivalent below.

\[\begin{tikzcd}[cramped, row sep=2em, column sep=4em]
        & (2) && (5) \\
	(1) \\
	& (3) && (6) \\
        (2) \\
	& (4) && (7) 
	\arrow[Rightarrow,"{\Cref{prop: disjoint union of mac posets}}", from=2-1, to=1-2]
    \arrow[Rightarrow,"\rotatebox{270}{\tiny \Cref{prop:equiv union wedge diamond}}", from=1-2, to=3-2]
	\arrow[Rightarrow, "\rotatebox{270}{\tiny \Cref{prop: disjoint union of mac posets}}", from=4-1, to=2-1]
	\arrow[Rightarrow, "{\Cref{prop: disjoint union of mac posets}}", from=1-2, to=1-4]
	\arrow[Rightarrow,"{\rotatebox{270}{\tiny \Cref{prop:equiv union wedge diamond}}}", from=3-2, to=5-2]
	\arrow[Rightarrow, "{\rotatebox{270}{\tiny \Cref{prop:equiv union wedge diamond}}}", from=3-4, to=5-4]
	\arrow[Rightarrow, "{\rotatebox{270}{\tiny \Cref{prop:equiv union wedge diamond}}}" ,from=1-4, to=3-4]
        \arrow[Rightarrow, "{\text{\Cref{thm: equiv union wedge diamond}}}" ,from=5-4, to=5-2]
	\arrow[Rightarrow, "{\text{\Cref{thm: equiv union wedge diamond}}}",from=5-2, to=4-1]
\end{tikzcd}\]

\begin{thm}\label{thm: equiv union wedge diamond}
Let $\P$ be a Macaulay poset that satisfies $\uSdw(\P_{[r]})=\P_{[r+1]}$ for all integers $r\geq 0$. The following statements are equivalent, where the products in (5), (6), and (7) each have an arbitrary number (at least two) of factors:
\begin{enumerate}
\item $\P$ is additive.
\item $\P\sqcup \P$ is Macaulay with respect to the union simplicial order. 
\item $\P\vee \P$ is Macaulay with respect to the union simplicial order.
\item $\P\di \P$ is Macaulay with respect to the union simplicial order. 
\item[(5)] $\P\sqcup \P \sqcup \cdots \sqcup \P$ is Macaulay with respect to the union simplicial order. 
\item[(6)] $\P\vee \P \vee \cdots \vee \P$ is Macaulay with respect to the union simplicial order. 
\item[(7)] $\P\di \P \di \cdots \di \P$ is Macaulay with respect to the union simplicial order. 
\end{enumerate}
\end{thm}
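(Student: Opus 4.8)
**Proof plan for Theorem 3.10 (equivalence of additivity and Macaulayness of iterated disjoint unions, wedge products, and diamond products).**

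The plan is to reduce everything to the disjoint‑union case, which is already settled by \Cref{prop: disjoint union of mac posets} (Clements's theorem), and then transfer the conclusions across the three operations using the machinery assembled in \Cref{s: uwd}. Concretely, I would organize the proof around two hubs: condition (1) (additivity of $\P$) and condition (5) ($\P\sqcup\cdots\sqcup\P$ Macaulay). First I would prove the cycle $(1)\Leftrightarrow(2)$ and $(1)\Rightarrow(5)$, and then deduce $(2)\Leftrightarrow(3)\Leftrightarrow(4)$ and $(5)\Leftrightarrow(6)\Leftrightarrow(7)$ from the isomorphism lemmas.

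\smallskip

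Step 1: $(1)\Leftrightarrow(2)$. The forward direction is the first statement of \Cref{prop: disjoint union of mac posets} applied with $n=2$ (note that $\P$ here satisfies the full‑shadow hypothesis $\uSdw(\P_{[r]})=\P_{[r+1]}$ by assumption, which is exactly what that proposition needs). The backward direction $(2)\Rightarrow(1)$ is the converse statement proved there. So this is essentially a citation of the proposition.

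\smallskip

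Step 2: $(1)\Rightarrow(5)$. Again this is the first statement of \Cref{prop: disjoint union of mac posets}, now for arbitrary $n\geq 2$, since that proposition is stated for $\P_1\sqcup\cdots\sqcup\P_n$ with all $\P_i\cong\P$. Combined with the fact that $(5)\Rightarrow(2)$ trivially (restrict to two copies, which is an initial segment under the union simplicial order in the obvious way — or simply note $\P\sqcup\P$ is a sub‑union), we get $(1)\Leftrightarrow(2)\Leftrightarrow(5)$.

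\smallskip

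Step 3: transferring to wedge products. Here I would invoke \Cref{prop:equiv union wedge diamond}: with all $\P_i=\P$, the equivalences there give that $\bigsqcup_{i=1}^n\underline{\P}$ is Macaulay iff $\bigvee_{i=1}^n\P$ is Macaulay. One must be slightly careful: \Cref{prop: disjoint union of mac posets} is about $\bigsqcup\P$ (with its least element still present), whereas \Cref{prop:equiv union wedge diamond}(1) is about $\bigsqcup\underline{\P}$. These are reconciled by the dual of \Cref{prop: hat preserves Macaulay}: since $\P$ is ranked with a unique minimal element of rank $0$, removing it does not affect Macaulayness, and moreover one checks the union simplicial order on $\bigsqcup\underline{\P}$ is exactly the restriction of the union simplicial order on $\bigsqcup\P$. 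This gives $(5)\Leftrightarrow(6)$ and, taking $n=2$, $(2)\Leftrightarrow(3)$.

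\smallskip

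Step 4: transferring to diamond products. By \Cref{lem: hat overline}(2), $\Diamond_{i=1}^n\widehat{\P}\cong\widehat{\bigvee_{i=1}^n\P}$, and by \Cref{prop: hat preserves Macaulay} (applicable since $\bigvee_{i=1}^n\P$ is ranked with all maximal elements in the top level, which holds because $\P$ has full shadows and hence all maximal elements of rank $r$), $\widehat{\bigvee_{i=1}^n\P}$ is Macaulay iff $\bigvee_{i=1}^n\P$ is. But this handles $\Diamond\widehat{\P}$, not $\Diamond\P$. To pass from $\Diamond_{i=1}^n\widehat\P$ to $\Diamond_{i=1}^n\P$ I would use the argument already given in the proof of \Cref{prop:equiv union wedge diamond} for the implication $(3)\Rightarrow(3')$: the poset $\overline{\Diamond_{i=1}^n\widehat\P}$ and $\Diamond_{i=1}^n\P$ differ only in the top level, every nonempty set of elements of rank $d-1$ in $\Diamond_{i=1}^n\P$ has an upper shadow of size one, so extending the witnessing total order arbitrarily past rank $d-1$ keeps the Macaulay conditions satisfied. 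For the reverse implication $(7)\Rightarrow(6)$ (which is what we need to close the equivalence, rather than just the implication in \Cref{prop:equiv union wedge diamond}), I would observe that since all $\P_i\cong\P$ with full shadows, $\Diamond_{i=1}^n\P$ restricted below its top level is canonically isomorphic to $\underline{\bigvee_{i=1}^n\P}$ with the union simplicial order, so a Macaulay total order on $\Diamond_{i=1}^n\P$ restricts to one on $\underline{\bigvee_{i=1}^n\P}$, hence on $\bigvee_{i=1}^n\P$ by the dual of \Cref{prop: hat preserves Macaulay}. This gives $(6)\Leftrightarrow(7)$ and $(3)\Leftrightarrow(4)$.

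\smallskip

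The main obstacle I anticipate is not any single deep step but the bookkeeping around \emph{which} total order is being used at each stage and making sure that the isomorphisms of \Cref{lem: hat overline} carry the union simplicial order to the union simplicial order. The equivalences $(1)\Leftrightarrow(2)$ and $(1)\Rightarrow(5)$ are given to us by Clements's theorem; the real work is checking that under the identifications $\Diamond_{i=1}^n\widehat\P\cong\widehat{\bigvee\P}$ and $\underline{\bigvee\P}\cong\bigsqcup\underline\P$ the prescribed total orders match up, and that the ``top level has shadows of size one'' trick genuinely yields a \emph{Macaulay} total order (both axioms, not just the cardinality inequality) on $\Diamond_{i=1}^n\P$. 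A secondary subtlety is verifying that the hypothesis $\uSdw(\P_{[r]})=\P_{[r+1]}$ propagates to the iterated unions/wedges/diamonds so that \Cref{prop: hat preserves Macaulay} and its dual apply at each invocation; this is straightforward since the upper shadow in a disjoint union (resp. wedge, diamond) is computed componentwise.
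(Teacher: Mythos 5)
Your reduction to Clements's theorem and the transfer between disjoint unions and wedge products is sound and is essentially the paper's strategy for conditions (1), (2), (3), (5), (6): the cycle $(1)\Leftrightarrow(2)$ together with $(1)\Rightarrow(5)\Rightarrow(2)$ works, and $(2)\Leftrightarrow(3)$, $(5)\Leftrightarrow(6)$ do follow from \Cref{prop:equiv union wedge diamond} once one checks, as you indicate, that deleting the bottom level of $\bigsqcup_{i=1}^n\P$ is harmless for the union simplicial order (all subsets of level $0$ of the same size have upper shadows of the same size, and the shadow of an initial segment is an initial segment). Note, however, that this last step is not literally an application of the dual of \Cref{prop: hat preserves Macaulay}, since $\bigsqcup_{i=1}^n\P$ has $n$ minimal elements rather than a unique one; the verification is easy but must be done directly.

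The genuine gap is in Step 4, in the reverse implications $(7)\Rightarrow(6)$ and $(4)\Rightarrow(3)$ needed to bring the diamond products into the equivalence. You claim that $\Diamond_{i=1}^n\P$ with its top level removed is canonically isomorphic to $\underline{\bigvee_{i=1}^n\P}$. This is false: removing the top of the diamond product yields $\bigvee_{i=1}^n\overline{\P}$, the wedge of the truncated factors, whereas $\underline{\bigvee_{i=1}^n\P}\cong\bigsqcup_{i=1}^n\underline{\P}$; these even have different cardinalities ($1+n(\lvert\P\rvert-2)$ versus $n(\lvert\P\rvert-1)$). What your argument actually delivers from $(7)$ is that $\bigvee_{i=1}^n\overline{\P}$ is Macaulay, and reinstating the $n$ deleted top elements to recover $\bigvee_{i=1}^n\P$ is not covered by \Cref{prop: hat preserves Macaulay}, which only adds a single top covering everything. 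An additional argument about the top level is unavoidable, and this is exactly where the paper does its only genuinely new work in this theorem: it closes the loop via $(7)\Rightarrow(4)$ (restriction of the total order to the last two factors, as in your $(5)\Rightarrow(2)$) followed by a direct proof of $(4)\Rightarrow(2)$, comparing $\P\di\P$ with $\P\sqcup\P$, which agree except at ranks $0$ and $r$, and using a pigeonhole argument at rank $r-1$: if the initial segment of size $\lvert A\rvert$ has upper shadow of size $2$ in $\P\sqcup\P$, then $\lvert A\rvert>\lvert\P_{[r-1]}\rvert$, so $A$ meets both copies and $\lvert\uSdw A\rvert=2$ as well. You need to supply an argument of this kind (either at the level of $\P\di\P$ versus $\P\sqcup\P$, or of $\bigvee_{i=1}^n\overline{\P}$ versus $\bigvee_{i=1}^n\P$) before $(4)$ and $(7)$ can be folded into the equivalence.
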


\begin{proof}
Note that in the proof of \Cref{prop:equiv union wedge diamond}, if the given order on the factors is union simplicial, then the new order is union simplicial. The diagram below shows the overall structure of the proof.

(7)$\implies$(4) Restrict the total order on $\P\di\P\di\cdots\di\P$ to a total order on the product of the last two factors $\P\di\P$. The initial segments of $\P\di\P$ are exactly the initial segments of $\P\di\P\di\cdots\di\P$ that are contained in $\P\di\P$. If $A\subseteq\P\di\P$, then $\uSdw_{\P\di\P} A = \uSdw_{\P\di\P\di\cdots\di\P} A$. So, $\P\di\P$ with this order is Macaulay.

(4)$\implies$(2) Suppose $\P\di\P$ is Macaulay with respect to a union simplicial order induced by a total order on $\P$. This total order induces a union simplicial order on $\P\sqcup\P$. Let $r$ be the rank of $\P$, and let $m$ be the number of elements in level $r-1$ of $\P$. If $r\leq 0$, then everything is Macaulay. So, assume $r>0$.

Suppose $A$ is a subset of level $r-1$ of $\P\sqcup\P$. Then $\lvert\uSdw\Seg_{r-1}\lvert A\rvert\rvert\in\{0,1,2\}$. Suppose $\lvert\uSdw\Seg_{r-1}\lvert A\rvert\rvert = 2$. Then $m < \lvert A\rvert$ because the total order on $\P\sqcup\P$ is union simplicial. Using the pigeonhole principle, it follows that $\lvert\uSdw A\rvert = 2$.  In particular, $\lvert\uSdw\Seg_{r-1}\lvert A\rvert\rvert \leq \lvert\uSdw A\rvert$.

Observe that the remaining conditions for $\P\sqcup\P$ being Macaulay also can be satisfied.
\end{proof}

\begin{rem}
The proof of (4)$\implies$(2) in \Cref{thm: equiv union wedge diamond} can be extended to a product of two different posets $\P\di\mathcal{Q}$ instead of $\P\di\P$, provided that $\P$ and $\mathcal{Q}$  have the same number of elements in level $r-1$.
\end{rem}

In  \Cref{thm: equiv union wedge diamond} we highlighted conditions under which  the Macaulay property of a poset $\P$ can be transferred to the  wedge product $\P\vee \P$. We now investigate conditions under which  the Macaulay property passes conversely from a wedge product to its factors.

\begin{prop}\label{prop: Macaulay wedge product implies Macaulay set}Suppose that $\P, \Q$ are ranked posets having smallest elements such that  
\begin{enumerate}
    \item $\P\vee \Q$ is Macaulay with respect to the total order $\mathcal{O}$.
    \item All elements of rank $1$ of $\Q$ form an initial segment of $\P\vee \Q$. 
\end{enumerate}
Then, both $\P$ and $\Q$ are Macaulay with respect to the total orders $\mathcal{O}|_\P$ and $\mathcal{O}|_\Q$.
\end{prop}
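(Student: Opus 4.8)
The plan is to transfer the two Macaulay conditions from $W:=\P\vee\Q$ to each factor, exploiting that the wedge product only glues the bottom elements, so shadows split cleanly above rank $1$. Write $\ell$ for the common least element, and let $\mathcal O|_\P,\mathcal O|_\Q$ be the total orders that $\mathcal O$ induces on the two copies $\P,\Q\subseteq W$. First I would record the shadow bookkeeping: since the only covering relations in $W$ are those inherited from $\P$ and from $\Q$ separately, for $d\ge 1$ and $B\subseteq\Q_{[d]}$ one has $\uSdw_W B=\uSdw_\Q B$, likewise $\uSdw_W A=\uSdw_\P A$ for $A\subseteq\P_{[d]}$, and $\uSdw_\P A$ and $\uSdw_\Q B$ are disjoint (they sit inside $\P\setminus\{\ell\}$ and $\Q\setminus\{\ell\}$). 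I will also use the elementary identity $\uSdw_\Q(\Q_{[d]})=\Q_{[d+1]}$ for all $d$, valid because every rank-$(d+1)$ element covers a rank-$d$ element (follow a maximal chain from $\ell$ up to it).

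The crux is the claim that \emph{for every $d$ the set $\Q_{[d]}$ is an initial segment of $W_{[d]}$ with respect to $\mathcal O$} — equivalently, in each level every element of $\Q$ is $\mathcal O$-larger than every element of $\P$. I would prove this by induction on $d$: the case $d=0$ is vacuous and $d=1$ is exactly hypothesis $(2)$. For the step, if $\Q_{[d]}=\Seg_d |\Q_{[d]}|$ in $W$, the second Macaulay condition for $W$ says $\uSdw_W(\Q_{[d]})$ is an initial segment of $W_{[d+1]}$; but $\uSdw_W(\Q_{[d]})=\uSdw_\Q(\Q_{[d]})=\Q_{[d+1]}$, so $\Q_{[d+1]}$ is an initial segment of $W_{[d+1]}$. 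This is the only place hypothesis $(2)$ is genuinely used; without it the propagation fails, and in fact hypotheses $(1)$ and $(2)$ can be mutually exclusive (for instance when $\P$ is a $2\times 2$ box no Macaulay order on $\P\vee\Q$ can put $\Q_{[1]}$ at the top of level $1$).

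Granting the claim, $\Q$ is Macaulay for $\mathcal O|_\Q$: for $d\ge 1$ and $q\le|\Q_{[d]}|$ the claim makes the size-$q$ initial segment $\Seg_d q$ of $(\Q,\mathcal O|_\Q)$ in level $d$ coincide with that of $(W,\mathcal O)$, and both Macaulay conditions for $\Q$ then follow by restriction — the first because $\uSdw$ agrees on $\Q$-sets, the second because $\uSdw_W(\Seg_d q)=\uSdw_\Q(\Seg_d q)$ lies inside $\Q_{[d+1]}$ and a $W$-initial segment contained in $\Q_{[d+1]}$ is a $\Q$-initial segment. For $\P$, the claim puts $\P_{[d]}$ at the bottom of $W_{[d]}$, so $\Seg_d (|\Q_{[d]}|+q)=\Q_{[d]}\cup\Seg_d q$ in $W$ whenever $\Seg_d q$ is the size-$q$ initial segment of $(\P,\mathcal O|_\P)$. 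Applying the first Macaulay condition of $W$ to $\Q_{[d]}\cup\Seg_d q$ and to $\Q_{[d]}\cup A$ (for $A\subseteq\P_{[d]}$ with $|A|=q$) and cancelling the common summand $\uSdw_\Q(\Q_{[d]})$ from the disjoint shadow decompositions gives $|\uSdw_\P \Seg_d q|\le|\uSdw_\P A|$. Applying the second Macaulay condition of $W$ to $\Q_{[d]}\cup\Seg_d q$ gives $\uSdw_\Q(\Q_{[d]})\sqcup\uSdw_\P(\Seg_d q)=\Seg_{d+1} m$ in $W$; if $\uSdw_\P(\Seg_d q)=\varnothing$ the desired equality is trivial, otherwise this $W$-initial segment meets $\P_{[d+1]}$, hence by the claim contains all of $\Q_{[d+1]}$, forcing $\uSdw_\Q(\Q_{[d]})=\Q_{[d+1]}$ and then $\Seg_{d+1} m=\Q_{[d+1]}\cup\Seg_{d+1}|\uSdw_\P\Seg_d q|$ in $W$; cancelling $\Q_{[d+1]}$ yields $\uSdw_\P\Seg_d q=\Seg_{d+1}|\uSdw_\P\Seg_d q|$ in $\P$. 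Level $0$ is trivial in both cases, so $\P$ and $\Q$ are Macaulay for $\mathcal O|_\P$ and $\mathcal O|_\Q$.

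I expect the inductive claim above to be the main obstacle: once $\Q_{[d]}$ is known to occupy the top of each level of $W$, everything else is routine manipulation of the disjoint-shadow decomposition, but it is precisely the propagation of hypothesis $(2)$ through all levels — and the role played there by the full-shadow identity $\uSdw(\Q_{[d]})=\Q_{[d+1]}$ — that lets the restricted orders detect the correct initial segments in $\P$ and $\Q$.
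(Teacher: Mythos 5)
Your proof is correct and follows essentially the same route as the paper's: the inductive claim that $\Q_{[d]}$ is an initial segment of each level of $\P\vee\Q$ (propagated via the second Macaulay condition and $\uSdw(\Q_{[d]})=\Q_{[d+1]}$), restriction of the order for $\Q$, and the cancellation argument with $A'=\Q_{[d]}\cup A$ for $\P$; you even go slightly further by explicitly verifying the second Macaulay condition for $\P$, which the paper leaves implicit. One caveat: your parenthetical claim that hypotheses (1) and (2) are mutually exclusive whenever $\P$ is a $2\times 2$ box is false as stated --- for instance, when $\Q\cong\P$ the union simplicial order makes $\P\vee\Q$ Macaulay with $\Q_{[1]}$ at the top of level $1$ --- but this aside plays no role in your argument.
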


\begin{proof}
Since upper shadows of initial segments are initial segments, it follows by induction on rank that the elements of $\Q$ of each rank form an initial segment of $\P\vee \Q$. Thus in every rank the elements of $\Q$ are larger with respect to the total order than the elements of $\P$. This means that the initial segments of $\Q$ with respect to $\mathcal{O}|_\Q$ are also initial segments of $\P\vee \Q$ with respect to the order $\mathcal{O}$, whence it follows that $\Q$ is Macaulay with the restricted total order. To see that $\P$ is Macaulay, let $d>0$, $A\subset \P_{[d]}$ and set $A'=\Q_{[d]}\cup A$. We deduce
\begin{eqnarray*}
\left | \uSdw_{\P\vee \Q}\Seg_d |A'|  \right | & \leq & |\uSdw_{\P\vee \Q}(A')|\\
 \left | \uSdw_{\P\vee \Q} (\Q_{[d]}\cup \Seg_d |A| )  \right | & \leq & |\Q_{[d+1]}\cup \uSdw_{\P}(A)|\\
  \left | \Q_{[d+1]}\cup  \uSdw_{\P}( \Seg_d |A| )  \right | & \leq & |\Q_{[d+1]}\cup \uSdw_{\P}(A)|\\
\left | \Q_{[d+1]} \right| +\left |  \uSdw_{\P}( \Seg_d |A| )  \right |  & \leq &\left | \Q_{[d+1]} \right| +\left |  \uSdw_{\P}(A) \right|\\
\left |  \uSdw_{\P}( \Seg_d |A| )  \right |  & \leq & \left |  \uSdw_{\P}(A) \right |.
\end{eqnarray*}
\end{proof}

\begin{cor} Suppose that $S=R[x_1,\ldots,x_n]/I$ such that $I$ is a homogeneous ideal whose generators have degree at least three. If $S\times _K S$ is Macaulay, then so is $S$.  
\end{cor}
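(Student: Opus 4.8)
The plan is to deduce this directly from \Cref{prop: Macaulay wedge product implies Macaulay set}, applied with $\P=\Q=\M_S$, the monomial poset of $S$. By \Cref{prop: FP}, the monomial poset of $S\times_K S$ is $\M_S\vee\M_S$, and this is Macaulay by hypothesis, say with respect to a total order $\mathcal O$. The point is that, since the generators of $I$ have degree at least three, levels $0,1,2$ of $\M_S$ (together with the covering relations among them) coincide with those of the ambient polynomial ring $R[x_1,\dots,x_n]$, so $I$ plays no role in the low-degree structure used below. To invoke \Cref{prop: Macaulay wedge product implies Macaulay set} with the order $\mathcal O$, it then remains to verify its hypothesis (2): that the rank-$1$ elements of one of the two copies of $\M_S$ form an initial segment of $\M_S\vee\M_S$.

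The heart of the argument is a shadow count in level $1$. The two embedded copies of $\M_S$ share only the minimum, so they carry disjoint covering relations; hence for $T\subseteq(\M_S\vee\M_S)_{[1]}$ meeting the copies in subsets $T_1,T_2$ of sizes $k_1,k_2$ one has $|\uSdw_{\M_S\vee\M_S}T|=|\uSdw_{\M_S}T_1|+|\uSdw_{\M_S}T_2|$. By the $S_n$-symmetry of the variables, in $\M_S$ the upper shadow of any $k$ of the $n$ rank-$1$ elements has size exactly $\binom{n+1}{2}-\binom{n-k+1}{2}$. Thus for $|T|=n$, writing $k_1+k_2=n$,
\[
|\uSdw T| \;=\; 2\binom{n+1}{2}-\binom{n-k_1+1}{2}-\binom{k_1+1}{2},
\]
and since $k\mapsto\binom{k+1}{2}$ is strictly convex, the subtracted quantity is maximized \emph{uniquely} at $k_1\in\{0,n\}$. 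Consequently every $n$-element subset of $(\M_S\vee\M_S)_{[1]}$ has upper shadow of size at least $\binom{n+1}{2}$, with equality precisely for the two ``pure'' sets, each consisting of all rank-$1$ elements of a single copy.

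I would then feed this into the Macaulay property of $\M_S\vee\M_S$. Taking $A$ to be the rank-$1$ elements of the first copy gives $|A|=n$ and $|\uSdw A|=\binom{n+1}{2}$, so $|\uSdw\Seg_1(n)|\le\binom{n+1}{2}$; combined with the lower bound above, $\Seg_1(n)$ attains the minimum and hence equals the set of all rank-$1$ elements of one of the two copies. Relabelling that copy as $\Q$ and the other as $\P$, hypothesis (2) of \Cref{prop: Macaulay wedge product implies Macaulay set} holds, so the proposition yields that $\P\cong\M_S$ is Macaulay with respect to $\mathcal O|_{\M_S}$; that is, $S$ is a Macaulay ring. (If $n=0$ the statement is trivial.)

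The only genuine obstacle I anticipate is the strict-convexity step: one must check that among $n$-element subsets of level $1$ of $\M_S\vee\M_S$ the upper shadow is \emph{strictly} minimized by the two pure sets, as it is this strictness that forces $\Seg_1(n)$ to be a full copy and thereby supplies hypothesis (2). A secondary, bookkeeping-type point is the passage from rings to posets when $I$ is not monomial: the identification $\M_{S\times_K S}=\M_S\vee\M_S$ is invoked (and genuinely used) only through its degree-$\le 2$ part, where $I$ contributes nothing, so \Cref{prop: FP} applies there without difficulty.
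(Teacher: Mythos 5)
Your proposal is correct and follows essentially the same route as the paper: identify $\M_{S\times_K S}$ with $\M_S\vee\M_S$ via \Cref{prop: FP}, show by a level-one shadow count that any mixed $n$-element subset of rank $1$ has strictly larger upper shadow than a pure one (so $\Seg_1 n$ must be the full variable set of one copy), and then invoke \Cref{prop: Macaulay wedge product implies Macaulay set}. The only difference is cosmetic — you count shadows by complementation and strict convexity of $k\mapsto\binom{k+1}{2}$, whereas the paper counts directly and verifies the resulting inequality algebraically.
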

\begin{proof} Let $S'=R[x_1,\ldots,x_n]/I'$ and $S''=R[y_1,\ldots,y_n]/I''$ be two copies of $S$ with monomial posets $\M'$ and $\M''$, respectively. If we can show that $\{x_1,\ldots,x_n\}$ and $\{y_1,\ldots,y_n\}$ are the only possible initial segments of rank $1$ of $n$ elements of $\M'\vee \M''$, then the claim is obtained by applying \Cref{prop: Macaulay wedge product implies Macaulay set} and \Cref{prop: FP}. 

Indeed, suppose that the initial segment has $d$ elements from $\M'$ and $n-d$ elements of $\M''$ where $0<d<n$. One can compute that the upper shadow for these sets contains $\binom{d}{2}+d+d(n-d)$ and $\binom{n-d}{2} + (n-d) + (n-d)d$ elements in $\M'$ and $\M''$ respectively. Thus, to reach a contradiction with the fact that $\M'\vee \M''$ is Macaulay it suffices to check 
$$\binom{d}{2}+d+d(n-d)+ \binom{n-d}{2} + (n-d) + (n-d)d > \binom{n}{2} + n, $$
which is a straightforward computation.
\end{proof}

\section{The Macaulay property for wedge and diamond products of Clements-Lindstr\"om posets}\label{s: wd box}

Recall that a Clements-Lindstr\"om poset is a standard example of a finite, Macaulay poset. The {\bf dimension} of a Clements--Lindstr\"om poset  is defined to be the number of elements of rank one in the poset.

\subsection{Wedge product of Clements-Lindstr\"om posets}

The main goal of this section is to give a classification for when the wedge product of 2-dimensional Clements-Lindstr\"om posets is Macaulay.    \Cref{prop: wedge box classification} and \Cref{prop: wedge of path and box} achieve this goal.

    The following lemma formalizes the observation that except for the two ranks at the top of a two-dimensional Clements--Lindstr\"om poset, every other monomial has upper shadow 2. The proof is evident.

\begin{lem}\label{prop: size upper shadow of monomial in box}
Consider the Clements-Lindstr\"om poset of monomials in the ring $K[x,y]/(x^a,y^b)$, then we have the following formula for the size of upper shadow of a monomial $t = x^my^n$.
\[|\uSdw(t)| = \begin{cases} 
                    2 & m < a - 1, n < b - 1\\
                    0 & m = a-1 , n = b-1 \\
                    1 & \emph{otherwise}
                \end{cases}
                                    \]
\end{lem}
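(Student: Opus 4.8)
The statement is a routine case analysis on the exponent vector of a monomial $t = x^m y^n$ in the box poset of $K[x,y]/(x^a,y^b)$, so the plan is simply to enumerate the covers of $t$ and count which of them survive the relations $x^a = y^b = 0$. Recall that in the monomial poset (identified with $\{(i,j) : 0 \le i < a,\ 0 \le j < b\}$), the elements covering $t = (m,n)$ are exactly those obtained by adding $1$ to a single coordinate, namely the (at most two) monomials $xt = (m+1,n)$ and $yt = (m,n+1)$. The monomial $xt$ lies in the poset precisely when $m+1 < a$, i.e. $m < a-1$, and similarly $yt$ lies in the poset precisely when $n < b-1$. So $|\uSdw(t)|$ is just the number of coordinates of $t$ that are not already at their maximal allowed value.

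First I would record this observation: $|\uSdw(t)| = \#\{k \in \{1,2\} : \text{the $k$-th coordinate of } t \text{ is not maximal}\}$, where "maximal" means equal to $a-1$ for the first coordinate and $b-1$ for the second. Then the three cases follow immediately. If $m < a-1$ and $n < b-1$, both $xt$ and $yt$ are valid monomials and are distinct, so $|\uSdw(t)| = 2$. If $m = a-1$ and $n = b-1$, then $t = x^{a-1}y^{b-1}$ is the unique maximal element of the box poset and neither $xt$ nor $yt$ survives, so $|\uSdw(t)| = 0$. In the remaining situation exactly one of the two coordinates is maximal: either $m = a-1$ and $n < b-1$ (so only $yt$ survives) or $m < a-1$ and $n = b-1$ (so only $xt$ survives), giving $|\uSdw(t)| = 1$ in either subcase. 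This exhausts all possibilities since for any monomial in the poset we have $m \le a-1$ and $n \le b-1$.

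There is no real obstacle here; the only thing worth being slightly careful about is confirming that the covers of $t$ in the monomial poset of a quotient ring are genuinely the monomials $xt, yt$ and nothing else — this is where the convention from \Cref{ex: monomial poset} and the paragraph following it (divisibility in $R/I$) is used, together with the fact that for the box poset the ambient ring is the very simple quotient $K[x,y]/(x^a,y^b)$, whose nonzero monomials are exactly the lattice points of the rectangle. Given that, the proof is a one-line case split, which is why the excerpt already remarks that "the proof is evident."
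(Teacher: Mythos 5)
Your argument is correct and is exactly the routine case analysis the paper has in mind when it says the proof is evident: the only possible covers of $t=x^my^n$ are $xt$ and $yt$, each surviving precisely when the corresponding exponent is below its maximum, so $|\uSdw(t)|$ counts the non-maximal coordinates. Nothing further is needed.
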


\begin{lem}\label{prop: size upper shadow segment in box}
Consider the ring $K[x,y]/(x^a,y^b)$ equipped with the lexicographic order. Then we have the following formula for the size of the upper shadow of a segment  $S$ in the monomial poset. Let $k = \lvert S \rvert$. Let $t_1, t_2, \ldots, t_k$ be the monomials in the segment. Then the size of the upper shadow is given by
\[ |\uSdw(S)| = \left(\sum_{i = 1}^{k} |\uSdw(t_i)|\right) - k + 1 \]
\end{lem}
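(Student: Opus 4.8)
The plan is to analyze the upper shadow of a segment $S$ in the two-dimensional box poset $K[x,y]/(x^a,y^b)$ with the lex order $x > y$, exploiting the very rigid geometry: a level $\P_{[d]}$ is a path (totally ordered by the partial order), and the lex order on that level coincides with the partial order restricted to that level. Concretely, if $d$ is a fixed degree, the monomials of degree $d$ are $x^{i}y^{d-i}$ for $i$ in a consecutive integer interval determined by $a,b$, and these are lex-ordered by decreasing $i$. So a segment $S$ is a set of consecutive monomials $t_1 >_{\lex} t_2 >_{\lex} \cdots >_{\lex} t_k$ in this path, where writing $t_j = x^{m_j}y^{n_j}$ we have $m_1 > m_2 > \cdots > m_k$ (consecutive) and $n_1 < n_2 < \cdots < n_k$ (consecutive).

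**Key steps.**

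First I would record what the upper shadow of a single monomial $t = x^m y^n$ is \emph{as a set}, refining \Cref{prop: size upper shadow of monomial in box}: $\uSdw(t) \subseteq \{x^{m+1}y^n,\, x^m y^{n+1}\}$, with $x^{m+1}y^n$ present iff $m < a-1$ and $x^m y^{n+1}$ present iff $n < b-1$. Next, I would observe that for two lex-consecutive monomials $t_j = x^{m_j}y^{n_j}$ and $t_{j+1} = x^{m_j - 1}y^{n_j + 1}$ in the same level, their upper shadows overlap in exactly one element when both are "present": namely $x^{m_j}y^{n_j+1} = x^{(m_{j+1})+1}y^{n_{j+1}}$ lies in $\uSdw(t_j) \cap \uSdw(t_{j+1})$ (it is the "$y$-up" shadow of $t_j$ and simultaneously the "$x$-up" shadow of $t_{j+1}$). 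This is the only possible overlap between shadows of elements of $S$, and no overlap can occur between $\uSdw(t_i)$ and $\uSdw(t_j)$ for $|i-j| \ge 2$ since the shadows live in disjoint parts of the degree-$(d+1)$ level.

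Then I would run an inclusion–exclusion / telescoping count over the union $\uSdw(S) = \bigcup_{i=1}^k \uSdw(t_i)$. By the previous step this union is a union of sets along a path in which only consecutive ones can meet, and each consecutive pair meets in at most one point, so $|\uSdw(S)| = \sum_{i=1}^k |\uSdw(t_i)| - \#\{j : \uSdw(t_j)\cap \uSdw(t_{j+1}) \neq \varnothing\}$. The crux is then to show that the number of such nonempty consecutive intersections is exactly $k-1$, i.e., that \emph{every} consecutive pair inside $S$ genuinely overlaps. Here is where I expect the only real subtlety: one must check that the "degenerate" monomials (those with $x$-up or $y$-up shadow missing because $m = a-1$ or $n = b-1$) can occur at most at the two ends of the path $\P_{[d]}$ — the monomial with $m = a-1$ is the lex-largest in its level and the one with $n = b-1$ is the lex-smallest — so if such a degenerate element lies strictly between two other elements of $S$ it's impossible, and if it is an endpoint $t_1$ or $t_k$ of $S$ the missing shadow is the one pointing "away" from the rest of $S$, hence does not affect the consecutive intersection with its unique neighbor in $S$. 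A short case check (is $t_1$ the global lex-max? is $t_k$ the global lex-min? is $|S|=1$, handled trivially as $\sum|\uSdw(t_i)| - k + 1 = |\uSdw(t_1)|$) closes this.

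**Main obstacle.**

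The genuine content — and the step I'd be most careful with — is the claim that consecutive shadows in a segment always overlap in \emph{exactly} one element, never zero. The worry is a segment sitting at the top corner of the box where monomials start losing shadow directions; the resolution is the observation above that the only monomials with a missing shadow direction are the two extreme monomials of the level, and a segment can contain at most one of each, located at its ends, where the missing direction points outward and is irrelevant to the internal overlap count. Everything else is the routine telescoping $\sum |\uSdw(t_i)| - (k-1)$, matching the claimed formula $\bigl(\sum_{i=1}^k |\uSdw(t_i)|\bigr) - k + 1$.
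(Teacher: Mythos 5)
Your proposal is correct and rests on exactly the same two facts as the paper's proof — that the upper shadows of lex-consecutive monomials in a level meet in precisely one element (the ``$y$-up'' shadow of one being the ``$x$-up'' shadow of the next) and that shadows of non-consecutive monomials are disjoint — with the paper packaging the count as an induction on $k$ rather than your direct inclusion--exclusion, a purely cosmetic difference. (Your careful treatment of the degenerate monomials at the ends of a level is in fact more explicit than the paper's ``similarly for the other cases,'' though the consecutive overlap is automatically nonempty whenever both monomials lie in the poset; only the parenthetical claim that a level is ``totally ordered by the partial order'' is a slip of wording, since distinct elements of equal rank are incomparable.)
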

\begin{proof}
The proof is by induction on $k$. The base case is trivial. Let $S = \{t_1, \ldots, t_k\}$. Suppose the formula is true for segments of length up to $k - 1$. Then we have 
\begin{equation*}
    \begin{split}
    |\uSdw(\{t_1,\cdots,t_k\})| &= |\uSdw(\{t_1,\cdots,t_{k-1}\})| + |\uSdw(t_k)| - |\uSdw(\{t_1,\cdots,t_{k-1}\}) \cap \uSdw(t_k)| \\
    &= \left(\sum_{i=1}^{k} \uSdw(t_i)\right) - (k-1) + 1 - |\uSdw(\{t_1,\cdots,t_{k-1}\}) \cap \uSdw(t_k)|
    \end{split}
\end{equation*}
To finish the proof we show $|\uSdw(\{t_1, \ldots, t_{k - 1}\}) \cap \uSdw(t_k)| = 1 $. Suppose $t_{k-1} = x^my^n$ and $m+1 < a, n+2 < b$, then $t_k = x^{m-1}y^{n+1}$ since we are taking segments with the lex order. The upper shadow of $t_{k-1}$ is $\{x^{m+1}y^n,x^my^{n+1}\}$ and the upper shadow of $t_k$ is $\{x^{m}y^{n+1},x^{m-1}y^{n+2}\}$, so the size of their intersection is 1. Similarly we can argue for the 
 cases for other values for $m$ and $n$, and also show that $\uSdw(\{t_1, \ldots, t_{k-2}\}) \cap \uSdw(t_k) = \varnothing$. Finally we use the fact that  $\uSdw(\{t_1, \ldots, t_{k-1}\}) = \uSdw(\{t_1, \ldots, t_{k-2}\}) \cup \uSdw(t_{k - 1})$ to show the result.
\end{proof}

\begin{lem}\label{prop: size new shadow in box}
Consider the ring $K[x,y]/(x^a,y^b)$ with the  lexicographic order, then we have the following formula for the size of the new shadow of a segment $S$ in its monomial poset. 
\[ 
\uSdw_{\text{new}}(S) = 
\begin{cases}
\lvert\uSdw(S)\rvert - 1, & S \emph{ is not an initial segment} \\
\lvert\uSdw(S)\rvert,     & S \emph{ is an initial segment}
\end{cases}
\]
\end{lem}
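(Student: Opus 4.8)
The plan is to unwind the definition of the new shadow and reduce everything to the formula for $|\uSdw(S)|$ given in \Cref{prop: size upper shadow segment in box}. Recall $\uSdw_{\text{new}}(S) = \uSdw(S)\setminus \uSdw(B)$, where $B$ is the set of all monomials of the same degree as $S$ that are lex-larger than every element of $S$. If $S$ is an initial segment then $B=\varnothing$, so $\uSdw(B)=\varnothing$ and $\uSdw_{\text{new}}(S)=\uSdw(S)$; this disposes of the second case immediately. So the content is entirely in the first case, where $S$ is a proper, non-initial segment in some level $\P_{[d]}$.

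For the first case, write $S=\{t_1,\dots,t_k\}$ with $t_1>_{\lex}\cdots>_{\lex}t_k$, and let $t_0$ be the lex-successor of $t_1$ in $\P_{[d]}$, i.e. the smallest element of $B$; note $t_0$ exists precisely because $S$ is not an initial segment, and $B$ is itself a segment with largest element the top of $\P_{[d]}$. First I would show that $\uSdw(S)\cap\uSdw(B) = \uSdw(\{t_1\})\cap \uSdw(\{t_0\})$, and that this intersection has size exactly $1$. The inclusion $\supseteq$ is clear since $t_1\in S$ and $t_0\in B$. For $\subseteq$: any element of $\uSdw(S)\cap\uSdw(B)$ covers some $t_i\in S$ and some $t_j\in B$; writing monomials as exponent vectors in the box, covering a degree-$d$ monomial means adding $1$ to one coordinate, and two degree-$d$ monomials have a common upper cover iff they are lex-adjacent in $\P_{[d]}$ (this is the same adjacency computation already carried out inside the proof of \Cref{prop: size upper shadow segment in box}, where consecutive segment monomials $x^my^n$ and $x^{m-1}y^{n+1}$ were shown to have intersecting shadows of size $1$, and non-consecutive ones disjoint shadows). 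Hence the only pair $(t_i,t_j)$ with $t_i\in S$, $t_j\in B$ that can contribute is the adjacent pair $(t_1,t_0)$, giving $|\uSdw(S)\cap\uSdw(B)|\le 1$; and it is exactly $1$ unless $t_1$ is the top element of $\P_{[d]}$ — but that cannot happen here since $t_0\in B$ lies strictly above $t_1$. (One should also check the boundary subtleties: when $t_1=x^my^n$ has $n=b-1$ its only upper cover is $x^{m+1}y^{n}$, and one verifies directly that this still lies in $\uSdw(\{t_0\})$ where $t_0=x^{m+1}y^{n-1}$, so the count $1$ persists; the remaining boundary case $m=a-1$ is symmetric/analogous.)

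Granting $|\uSdw(S)\cap\uSdw(B)|=1$, inclusion–exclusion gives
\[
|\uSdw_{\text{new}}(S)| = |\uSdw(S)| - |\uSdw(S)\cap\uSdw(B)| = |\uSdw(S)| - 1,
\]
which is the claimed formula. I expect the main obstacle to be the case analysis at the two boundary strips of the box (where $m=a-1$ or $n=b-1$ and an upper shadow drops from size $2$ to size $1$): there one must make sure the single shared cover is not accidentally lost, i.e. that $\uSdw(\{t_1\})$ and $\uSdw(\{t_0\})$ still overlap. Everything else is a direct consequence of \Cref{prop: size upper shadow segment in box} and the adjacency facts already established in its proof, so the write-up should be short once the boundary bookkeeping is handled.
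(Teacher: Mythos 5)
Your proposal is correct and follows essentially the same route as the paper, which disposes of this lemma in one line by appealing to ``similar arguments as in the latter part of the previous proposition,'' i.e.\ exactly the lex-adjacency computation you invoke to show $\lvert\uSdw(S)\cap\uSdw(B)\rvert=1$ for a non-initial segment. Your write-up is simply a fully detailed version of that argument, including the boundary cases the paper leaves implicit.
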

\begin{proof}
It follows from similar arguments as in the latter part of the previous proposition.
\end{proof}

\begin{thm}\label{thm: wedge 2 boxes different size}
Let $\P_1$ be a $a\times b$ Clements-Lindstr\"om poset and let $\P_2$ be a $c\times d$ Clements-Lindstr\"om poset such that $a \geq c, b \geq d$. Then $\P_1 \vee \P_2$ is Macaulay with respect to the union simplicial order of the two lexicographic orders.
\end{thm}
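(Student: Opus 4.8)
The plan is to verify the two Macaulay conditions directly for the union simplicial order on $\P_1 \vee \P_2$, exploiting the explicit shadow formulas in Lemmas 4.3–4.5. Write $\O$ for the union simplicial order, so that in each rank $d$ every element of $(\P_1)_{[d]}$ precedes every element of $(\P_2)_{[d]}$, and inside each $\P_i$ the order is lexicographic. A subset $A \subseteq (\P_1\vee\P_2)_{[d]}$ decomposes as $A = A_1 \sqcup A_2$ with $A_i \subseteq (\P_i)_{[d]}$, and since the two copies are glued only at the minimal element (rank $0$), for $d \geq 1$ the upper shadows are disjoint: $|\uSdw(A)| = |\uSdw_{\P_1}(A_1)| + |\uSdw_{\P_2}(A_2)|$. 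So the problem really is to understand how an initial segment of size $q = |A_1| + |A_2|$ splits between the two boxes: it fills up $(\P_1)_{[d]}$ first (the lex-initial segment of $\P_1$ of the appropriate size) and then spills into $(\P_2)_{[d]}$.

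First I would reduce the upper-shadow count of a lex-initial segment in a single box to a clean closed form. By Lemma 4.4 and Lemma 4.3, for a segment $S$ in a box the size $|\uSdw(S)|$ is $1 + \sum_i |\uSdw(t_i)|$, and each $|\uSdw(t_i)|$ is $2$ except on the two top ranks; so for an initial segment $S$ of size $s$ in the $d$-th level of an $a\times b$ box, $|\uSdw(S)|$ is a concave, piecewise-linear, nondecreasing function of $s$ — it increases with slope roughly $1$ per element until $S$ starts to hit elements of upper shadow $1$ or $0$, after which it flattens. Call this function $f_i(d,s)$ for box $\P_i$. Then the quantity to control is $g(d,q) := f_1(d, \min(q, n_1)) + f_2(d, \max(0, q - n_1))$ where $n_1 = |(\P_1)_{[d]}|$; condition (1) of Macaulay is the inequality $g(d,|A_1|+|A_2|) \le f_1(d,|A_1|) + f_2(d,|A_2|)$ for all valid splits. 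The heart of the argument is a discrete exchange/concavity lemma: since $f_1$ is concave and $f_2$ is concave, and the initial segment loads $\P_1$ before $\P_2$, shifting mass from $A_2$ into $A_1$ (to reach the initial-segment configuration) never increases the total shadow, as long as $f_1$ has slope at least as large as $f_2$ in the relevant range. This is where the hypothesis $a \geq c$, $b \geq d$ enters: the larger box $\P_1$ has "more room," so its shadow function stays at slope $2$ at least as long as that of $\P_2$, and stays at slope $1$ at least as long; this ordering of the slope profiles is exactly what makes the exchange monotone in the right direction. I expect this slope-comparison step — making precise that $f_1(d,\cdot)$ dominates $f_2(d,\cdot)$ in the appropriate majorization sense when $(a,b)\geq(c,d)$ componentwise — to be the main obstacle, and the place where one must be careful about the boundary ranks $d = a+b-2$, $d = a-1$, etc., where the box $\P_1$ may run out of a level entirely while $\P_2$ has not.

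For condition (2), that the upper shadow of an initial segment is again an initial segment, I would argue: the upper shadow of the $\P_1$-part of an initial segment is a lex-initial segment of $(\P_1)_{[d+1]}$ (since $\P_1$ is Macaulay for lex), and likewise for the $\P_2$-part; it then remains to check that when the initial segment is "mixed" — i.e. all of $(\P_1)_{[d]}$ plus a proper initial piece of $(\P_2)_{[d]}$ — the shadow is all of $(\P_1)_{[d+1]}$ plus an initial piece of $(\P_2)_{[d+1]}$, which holds provided $\uSdw_{\P_1}((\P_1)_{[d]}) = (\P_1)_{[d+1]}$, i.e. the box has full shadow between consecutive levels. This is true for box posets (it is the hypothesis $\uSdw(\P_{[r]}) = \P_{[r+1]}$ invoked throughout Section 3), so condition (2) follows. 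Assembling the two conditions, together with the base cases $d = 0$ and the top ranks handled separately as in the proof of Proposition 3.5, gives that $\P_1 \vee \P_2$ is Macaulay with respect to $\O$.
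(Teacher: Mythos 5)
Your overall strategy --- decompose $A=A_1\sqcup A_2$, use disjointness of the two shadows, reduce to lex-initial segments in each factor via Clements--Lindstr\"om, and then run an exchange argument on the split $(|A_1|,|A_2|)$ using the explicit shadow formulas for two-dimensional boxes --- is essentially the paper's proof. However, there are two genuine problems in how you set up the exchange.

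First, you have the direction of the union simplicial order backwards. In this paper an initial segment consists of the \emph{largest} elements, and \Cref{defn: union simplicial order} places the elements of $\P_2$ (the smaller box) above those of $\P_1$ in each rank; hence the initial segment in rank $d$ fills $(\P_2)_{[d]}$ first and only then spills into $(\P_1)_{[d]}$ --- the opposite of what you assert. This is not a bookkeeping slip: the statement is false for the order you describe. Take $\P_1$ the $3\times 3$ box and $\P_2$ the $2\times 2$ box. At rank $1$ your initial segment of size $1$ is the lex-top element of $\P_1$, whose upper shadow has size $2$, while a rank-one element of $\P_2$ has upper shadow of size $1$, so condition (1) fails for ``fill the larger box first.'' The hypothesis $a\geq c$, $b\geq d$ is precisely what forces the smaller box to sit on top, and your explanation of where the hypothesis enters inherits the same inversion.

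Second, even after fixing the direction, the claim that shifting mass toward the initial-segment configuration ``never increases the total shadow'' is false step by step. With $\P_1$ a $3\times 4$ box, $\P_2$ a $3\times 3$ box, rank $d=1$ and $q=2$, the splits $(s_1,s_2)=(2,0),(1,1),(0,2)$ give shadow sizes $3,4,3$: the function $s_1\mapsto f_1(s_1)+f_2(q-s_1)$ rises before it falls. Concavity of each $f_i$ (which does hold) only tells you the minimum is attained at one of the two endpoints; you still need a separate comparison of the endpoints, which amounts to showing $f_2(d,s)\leq f_1(d,s)$ for every $s$ --- the smaller box's initial segments have the smaller shadows because each of its two extreme elements in a given rank loses a cover at an earlier rank than the corresponding element of the larger box. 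That endpoint comparison is exactly the content of the paper's two cases bounding $|\uSdw_{\text{new}}(S_1)|$ and $|\uSdw_{\text{new}}(S_2)|$, and it is the step your sketch does not actually supply. Your treatment of condition (2) is fine once the roles of $\P_1$ and $\P_2$ are swapped.
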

\begin{proof}
The main idea of the proof is to follow the proof for \Cref{prop: disjoint union of mac posets}, and prove the inequality that depends on the two posets being the same by brute force. We consider $\mathcal{L} = \P_1 \sqcup \P_2$ under the union simplicial order. Let $\P_1^{r},\P_2^{r}$ denote the elements of rank r in the respective posets.

Consider some set $A\subseteq \mathcal{Q}$ with all elements of rank $r$, and let $A_1=A\cap \mathcal{P}_1$ and $A_2 = A\cap \mathcal{P}_2$. We will do a sequence of transformations to change $A$ to an initial segment of the union simplicial order, while not decreasing the shadow at each step. We assume that $A_1 \neq \varnothing$ and $A_2 \neq \varnothing$, since otherwise the problem is handled by symmetry.
	
Now let $B_1$ be the initial segment of size $|A_1|$ in $\mathcal{P}_1$ and
$B_2$ be the initial segment of size $|A_2|$ in $\mathcal{P}_2$.
Put $B=B_1\cup B_2$.
We will show that $|\uSdw(B)| \leq |\uSdw(A)|$. One has,
\begin{align*}
|\uSdw(B)| &= |\uSdw(B_1)| + |\uSdw(B_2)| \tag{By definition of $\mathcal{L}$.},\\
&\leq |\uSdw(A_1)| + |\uSdw(B_2)| \tag{Since $\mathcal{P}_1$ is Macaulay.},\\
&\leq |\uSdw(A_1)| + |\uSdw(A_2)| \tag{Since $\mathcal{P}_2$ is Macaulay.},\\
&= |\uSdw(A)| \tag{By definition of $\mathcal{L}$.}.
\end{align*}
Thus, $|B|=|A|$ and $|\uSdw(B)| \leq |\uSdw(A)|$.
	
Next, we  construct a set $C$ that is an initial segment of the union simplicial order.
Notice that $B_1$ and $B_2$ are initial segments in $\mathcal{P}_1$ and $\mathcal{P}_2$ respectively.
Let $F_2$ be all the elements of rank $r$ in $\mathcal{P}_2$ that are not in $B_2$.
Put $q = \min\{|B_1|, |F_2|\}$.
Let $S_1$ be the first $q$ elements in $B_1$,
and let $S_2$ be the last $q$ elements of $F_2$.
Construct
\begin{equation*}
C = (B_1\setminus S_1) \cup (B_2\cup S_2).
\end{equation*}
Then we have
\begin{align*}
|\uSdw(C)| &= |\uSdw(B_1)| - |\uSdw_{\text{new}}(S_1)| + |\uSdw(B_2)| + |\uSdw_{\text{new}}(S_2)| \tag{Definitions of $\mathcal{Q}$ and $\mathcal{\uSdw_{\text{new}}}$.}\\
&= |\uSdw(B)| + |\uSdw_{\text{new}}(S_2)| - |\uSdw_{\text{new}}(S_1)| \tag{Definition of $\mathcal{Q}$.}.
\end{align*}
We want to show that $|\uSdw_{\text{new}}(S_2)| - |\uSdw_{\text{new}}(S_1)| \leq 0$.

\emph{Case 1:} Suppose $q = |B_1|$. Then $S_1$ is an initial segment, but not $S_2$. Since $|F_2| < |(\P_2)_{[r]}| \leq |(\P_1)_{[r]}|$, so if $q = |(\P_2)_{[r]}|$, then $|F_2| \geq |(\P_2)_{[r]}|$ which cannot be since $|B_2|$ is nonempty. So we have that there is at least one element in $(\P_1)_{[r]}$ that is not in $B_1$. It follows that there is at most one element in $S_1$ which has an upper shadow size of 1, the remaining elements have an upper shadow size of 2 (\Cref{prop: size upper shadow of monomial in box}). Then from \Cref{prop: size upper shadow segment in box} we have
\begin{equation*}
\begin{split}
\lvert\uSdw(S_1)\rvert &= \left(\sum_{i = 1}^{q}|\uSdw(t_i)|\right)- q + 1\\
                       &\geq (1 + 2(q - 1)) - q + 1 \\
                       &\geq q
\end{split} 
\end{equation*}
From \Cref{prop: size new shadow in box}, we get $|\uSdw_{\text{new}}(S_1)| \geq q$. Similarly for $S_2$ we get by \Cref{prop: size upper shadow segment in box} and \Cref{prop: size new shadow in box} that $|\uSdw_{\text{new}}(S_2)| \leq q$.

\emph{Case 2:} Suppose \(q = \lvert F_2 \rvert \), then \( S_2 \) is a final segment and \( S_1 \) is a segment. Now, we use a similar technique and try to show again that \( |\uSdw_{\text{new}} (S_1)| \geq |\uSdw_{\text{new}}(S_2)| \). Observe \( S_1 \) is a segment, but not a final segment nor an initial segment. Therefore, each element of \( S_2 \) will individually contribute to 2 upper shadows (i.e., \( |\uSdw(t_i)| = 2 \) for all \( i \in \{1, \ldots, q\} \)). This means
\begin{equation*}
    \begin{split}
        |\uSdw(S_1)| & = \left(\sum_{i=1}^{q} |\uSdw(t_i)|\right) - q + 1 \\
                     & \geq 2q - q + 1 \\
                     & \geq q + 1
    \end{split} 
\end{equation*}
Now, by \Cref{prop: size new shadow in box}, we have 
\[ |\uSdw_{\text{new}}(S_1)| = |\uSdw(S_1)| - 1 \geq (q + 1) - 1 = q. \]

Since \( S_2 \) is a final segment, in the worst case scenario, we have that \( |\uSdw(S_2)| \leq q + 1 \), as for any \( q \)-element segment, the upper shadow in our Clements-Lindstr\"om poset has size at most \( q + 1 \). Then by \Cref{prop: size new shadow in box}, we have
\[ |\uSdw_{\text{new}}(S_2)| \leq (q + 1) - 1 = q. \]

 Thus, we again have 
\[ |\uSdw_{\text{new}}(S_1)| \geq q \geq |\uSdw_{\text{new}}(S_2)|.\]

	Checking that $C$ is an initial segment of the union simplicial order and that the shadow of an initial segment of the union simplicial order
	is an initial segment of the union simplicial order
	are standard verifications left as an exercise for the interested reader.
	After these exercises are completed we get that $\mathcal{Q}$ is Macaulay with the union simplicial order.

Since the disjoint union is Macaulay, the wedge product is Macaulay by \Cref{prop:equiv union wedge diamond}.
\end{proof}

The converse of \Cref{thm: wedge 2 boxes different size} is not true due to the following counterexample.

\begin{prop}\label{prop: wedge box classification}
    Let $\P_1$ be  a $m\times n$ Clements-Lindstr\"om poset and let  $\P_2$ be a $m'\times n'$ Clements-Lindstr\"om poset such that $m \leq n$, $m' \leq n'$, $m,m' > 1$. 
    Then $\P_1 \vee \P_2$ is Macaulay if and only if $m \leq m'$ and $n \leq n'$ or vice versa.
\end{prop}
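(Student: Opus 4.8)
\textbf{The ``if'' direction} is immediate from \Cref{thm: wedge 2 boxes different size}. The wedge product is symmetric in its factors and being Macaulay only requires the \emph{existence} of a suitable total order, so we may always let the coordinatewise-larger of the two boxes play the role of $\P_1$ in \Cref{thm: wedge 2 boxes different size}; hence $\P_1\vee\P_2$ is Macaulay whenever $(m,n)$ and $(m',n')$ are comparable in the product order.

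\textbf{For the ``only if'' direction} assume $(m,n)$ and $(m',n')$ are incomparable. Two integer pairs sharing a coordinate cannot be incomparable, so $m\neq m'$, and after relabelling we may take $m<m'$; together with $m\le n$, $m'\le n'$ this forces $m<m'\le n'<n$. The plan is to show that no total order makes $\P_1\vee\P_2$ Macaulay, using two facts about box posets. First, since the boxes are glued only at the least element, $\uSdw(A_1\sqcup A_2)=\uSdw_{\P_1}A_1\sqcup\uSdw_{\P_2}A_2$ for $A_i\subseteq(\P_i)_{[d]}$, $d\ge1$; hence $\partial_d(q):=\min\{\,|\uSdw A| : A\subseteq(\P_1\vee\P_2)_{[d]},\ |A|=q\,\}$ does not depend on the chosen order, and equals the infimal convolution of the corresponding quantities for $\P_1$ and $\P_2$. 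Second, \Cref{prop: size upper shadow of monomial in box}, \Cref{prop: size upper shadow segment in box} and the Clements--Lindstr\"om theorem give level-by-level control for each box: an $a\times b$ box with $a\le b$ has strictly increasing level sizes up to rank $a-1$, a plateau of size $a$ through rank $b-1$, then strictly decreasing sizes; on a plateau rank $\partial_d(q)=q$ for $q\le a$, with $|\uSdw A|=|A|$ forcing $A$ to be a lexicographic initial segment, and the upper shadow of such a segment is again the lexicographic initial segment of the same size one rank up; on the last plateau rank $b-1$ one has in addition $\partial_{b-1}(a)=a-1$; and on an increasing rank every element has upper shadow of size $2$, so $\partial_d(q)\ge q+1$ for $q\ge1$. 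I would record these in a preliminary lemma.

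\textbf{The crux} is to play these off against each other at two ranks. First take $e:=n'-1$: it is the last plateau rank of $\P_2$ and, since $m<m'\le n'<n$, a plateau rank of $\P_1$. By the convolution formula, any $A\subseteq(\P_1\vee\P_2)_{[e]}$ of size $m'$ has $|\uSdw A|=|\uSdw_{\P_1}A_1|+|\uSdw_{\P_2}A_2|\ge|A_1|+|A_2|=m'$ unless $A_1=\varnothing$ and $A_2=(\P_2)_{[e]}$, in which case $|\uSdw A|=m'-1$; so $(\P_2)_{[e]}$ is the unique subset of size $m'$ attaining $\partial_e(m')=m'-1$. Therefore, in any Macaulay order, $\Seg_e(m')=(\P_2)_{[e]}$, and by nesting $\Seg_e(m)\subseteq(\P_2)_{[e]}$ since $m<m'$. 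Next take the first plateau rank $m-1$ of $\P_1$. There the level of $\P_2$ is still in its increasing range (as $m-1\le m'-2$), so the convolution formula shows the only subsets of $(\P_1\vee\P_2)_{[m-1]}$ of size $q\le m$ attaining $\partial_{m-1}(q)=q$ are the lexicographic initial segments of $(\P_1)_{[m-1]}$; hence $\Seg_{m-1}(m)=(\P_1)_{[m-1]}$. Using the second Macaulay axiom and the ``shadow of a plateau initial segment'' clause above, propagate this up the plateau ranks $m-1,m,\dots,n'-1$ of $\P_1$ to obtain $\Seg_d(m)=(\P_1)_{[d]}$ for each, in particular $\Seg_{n'-1}(m)=(\P_1)_{[n'-1]}$. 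Since $m>1$ we have $n'-1\ge1$, so $(\P_1)_{[n'-1]}$ is nonempty and disjoint from $(\P_2)_{[n'-1]}$, contradicting $\Seg_{n'-1}(m)\subseteq(\P_2)_{[n'-1]}$. Hence $\P_1\vee\P_2$ is not Macaulay.

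\textbf{The main obstacle} is bookkeeping, not conceptual: one must check from $m<m'\le n'<n$ that ranks $m-1$ and $n'-1$ land where claimed in each box (increasing range / plateau / last plateau rank), handle the degenerate case $m'=n'$ where the plateau of $\P_2$ is the single rank $m'-1$, and prove the sharpenings of \Cref{prop: size upper shadow segment in box} invoked above --- chiefly that on a non-final plateau rank $|\uSdw A|=|A|$ characterises lexicographic initial segments and that their shadows behave as claimed. None of this is hard, but it is what a complete write-up must supply.
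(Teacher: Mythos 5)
Your proof is correct and takes essentially the same route as the paper's: both reduce the "only if" direction to the configuration $m<m'\le n'<n$, force the low plateau ranks of $\P_1$ to occupy the top of the total order, and collide this with the fact that at rank $n'-1$ the full level $(\P_2)_{[n'-1]}$ is the unique size-$m'$ shadow-minimizer. The only difference is organizational --- the paper descends from rank $m-1$ to rank $1$ to pin down the union simplicial order and then contradicts the shadow-minimality axiom directly at rank $n'-1$, whereas you propagate $\Seg_{m-1}(m)=(\P_1)_{[m-1]}$ upward and contradict the nesting of initial segments --- a slightly tidier packaging of the same argument.
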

\begin{proof}
Let $\P_1$ be the poset of monomials of $K[x_1,x_2]/(x_1^m,x_2^n)$ and let $\P_2$ be the poset of monomials of poset $K[y_1,y_2]/(y_1^{m'},y_2^{n'})$. 
Without loss of generality, let $m \leq m'$. The forward implication follows from \Cref{thm: wedge 2 boxes different size}. We show that if $m < m'$ and $n > n'$ then $\P_1 \vee \P_2$ is not Macaulay. Suppose it is Macaulay. Since $m < m',n,n'$, we must have that $x_1^{m-1}$ is the largest element of rank $m-1$ as every other element of that rank has an upper shadow of size 2, while $x_1^{m-1}$ has an upper shadow of size $1$. There is only one element of rank $m-2$ that has $x_1^{m-1}$ in its upper shadow, which is $x_1^{m-2}$. It follows that this is the largest element of rank $m-2$. By repeating this argument we see that $x_1$ is the largest element of rank $2$. It follows that $\Seg_1 2 = \{ x_1,x_2\} $ by comparing the size of upper shadows of all subsets of size 2 containing $x_1$. Hence  the total order must be the union simplical order  with elements of $\P_1$ larger than elements of $\P_2$. Now consider the upper shadow of the $m'$ rank $n'-1$ elements of $\P_2$. It is easy to see that it has $m'-1$ elements (\Cref{prop: size upper shadow of monomial in box} and \Cref{prop: size upper shadow segment in box}). But the initial segment of length $m'$ has upper shadow of size $m'$. To see this first notice that the size of the upper shadow will be the sum of the sizes of upper shadow of it's two disjoint subsets from $\P_1$ and $\P_2$. Again from \Cref{prop: size upper shadow of monomial in box} and \Cref{prop: size upper shadow segment in box}, we see that the size of the upper shadow is $m + (m' - m) = m' > m' - 1$. This is a contradiction, since initial segments have the smallest upper shadows.
\end{proof}

\begin{prop}\label{prop: wedge of path and box}
    Let $\M_1$ be a path poset of rank $n-1$ and  let $\M_2$ be a $m'\times n'$ Clements-Lindstr\"om poset with $m' \leq n'$. Then $\M_1\vee\M_2$ is Macaulay if and only if $n \leq n'$ or $m' = 1,2$.
\end{prop}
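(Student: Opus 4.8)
\emph{The plan} is to organize the proof around three regimes — $n\le n'$; $m'\le 2$ with $n>n'$; and $m'\ge 3$ with $n>n'$ — the first two giving the ``if'' direction and the last the ``only if''. Write $\M_1$ for the path on $1,x,\dots,x^{n-1}$ and $\M_2$ for the monomials of $K[y_1,y_2]/(y_1^{m'},y_2^{n'})$. If $n\le n'$, I would regard $\M_1$ as a $1\times n$ box; since $m'\ge 1$ and $n'\ge n$, the boxes $\M_2$ (of size $m'\times n'$) and $\M_1$ satisfy the hypotheses of \Cref{thm: wedge 2 boxes different size}, whence $\M_2\vee\M_1\cong\M_1\vee\M_2$ is Macaulay. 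If instead $m'=1$ (and $n>n'$), then $\M_2$ is a path and $\M_1\vee\M_2$ is a two-legged spider poset; here I would order each level so that the vertex on the shorter leg is the larger one and check the two Macaulay conditions directly, which is immediate since every vertex of a path has upper shadow of size at most $1$. This leaves the cases $m'=2$ and $m'\ge 3$, both with $n>n'$.

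\emph{The case $m'=2$} is the technical heart. Then $\M_1\vee\M_2$ has the three elements $x^d,\ y_1y_2^{\,d-1},\ y_2^{\,d}$ on each level $1\le d\le n'-1$, the two elements $y_1y_2^{\,n'-1},\ x^{n'}$ on level $n'$, and only the vertex $x^d$ on each level $n'+1,\dots,n-1$. The feature that makes the case work is that $y_1^2=0$, so $y_1$ has upper shadow $\{y_1y_2\}$ of size $1$, exactly like $x$ — precisely what fails once $m'\ge 3$. I would take the total order to be the union simplicial order with the path $\M_1$ as first factor and the box $\M_2$, ordered lexicographically with $y_1>y_2$, as second (so that the box elements outrank the path element on every level), and then verify the two Macaulay conditions level by level using \Cref{prop: size upper shadow of monomial in box}, \Cref{prop: size upper shadow segment in box}, and \Cref{prop: size new shadow in box}. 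The crucial observation is that on level $n'-1$ the box elements $y_1y_2^{\,n'-2}$ and $y_2^{\,n'-1}$ share the upper shadow $\{y_1y_2^{\,n'-1}\}$, so the initial segment of size $2$ there is exactly this pair and attains the minimal upper shadow among size-$2$ subsets; the remaining checks — in particular the transitions from level $n'-1$ to $n'$ and from $n'$ to $n'+1$, and the small cases $n'=2,3$ — are routine but must be carried out. I expect this level-by-level bookkeeping to be the main obstacle.

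\emph{The case $m'\ge 3$: not Macaulay.} Here I would follow the argument in the proof of \Cref{prop: wedge box classification}. Since $n'\ge m'\ge 3$, on level $1$ the vertex $x$ has upper shadow $\{x^2\}$ of size $1$ while $y_1$ and $y_2$ each have upper shadow of size $2$, so the first Macaulay condition forces $\Seg_1 1=\{x\}$; applying the second Macaulay condition repeatedly then forces $\Seg_d 1=\{x^d\}$ (that is, $x^d$ is the largest vertex of level $d$) for all $1\le d\le n-1$. Now compare, on level $n'-1$, the full box level $M$ — which has $m'$ elements because $m'\le n'$ — with the initial segment of the same size. As boxes have full upper shadows, $\uSdw(M)$ is level $n'$ of the $m'\times n'$ box, of size $m'-1$. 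On the other hand $\Seg_{n'-1}m'$ must contain the largest vertex $x^{n'-1}$, hence equals $\{x^{n'-1}\}$ together with all but the smallest element of the box level $n'-1$; its upper shadow is the disjoint union of $\{x^{n'}\}$ with the upper shadow of those $m'-1$ box elements, and the latter is still all of level $n'$ of the box, because deleting a single element from level $n'-1$ of the box cannot shrink its upper shadow — each $y_1^iy_2^{\,n'-i}$ in level $n'$ (with $1\le i\le m'-1$) has the two distinct preimages $y_1^{i-1}y_2^{\,n'-i}$ and $y_1^iy_2^{\,n'-1-i}$ in level $n'-1$. Therefore $\lvert\uSdw(\Seg_{n'-1}m')\rvert=1+(m'-1)=m'>m'-1=\lvert\uSdw(M)\rvert$, contradicting the first Macaulay condition. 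Hence $\M_1\vee\M_2$ is not Macaulay, completing the characterization.
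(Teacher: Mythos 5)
Your proposal is correct and follows essentially the same route as the paper: the case $n\le n'$ via \Cref{thm: wedge 2 boxes different size} (viewing the path as a $1\times n$ box), a direct check for $m'\le 2$, and for $m'\ge 3$ the same contradiction at rank $n'-1$ between the full box level (upper shadow of size $m'-1$) and the initial segment of size $m'$ (upper shadow of size $m'$, since dropping one element of the box level cannot shrink its shadow while $x^{n'}$ contributes a new element). The one place your write-up is cleaner than the paper's is in pinning down the total order: you force $x^d$ to be the largest element of every rank by a single upward induction from rank $1$, where $x$ is the unique element with upper shadow of size $1$, whereas the paper splits into the cases $n>m'+n'$, $n<m'+n'$, $n=m'+n'$ and argues less directly.
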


\begin{proof}
($\Leftarrow$) If $n \leq n'$ we are done with the help of \Cref{thm: wedge 2 boxes different size}. If $m' = 1$ we get a spider poset which is Macaulay by \cite{BE}. If $m' = 2$ there are at most 3 elements in each rank and their upper shadows are calculated easily so the claim can be checked.

($\Rightarrow$) Suppose $n > n'$ and $m' > 2$. If $n > m' + n'$, that is, the path has higher rank, then at the highest rank of $\M_2$ the maximal element of $\M_2$ must form an initial segment. This implies that every element of $\M_2$ is larger than every element of $\M_1$ (at each rank), but this is a contradiction since at rank 2 in $\M_2$ every element has an upper shadow of size 2 but in $\M_1$ the element has an upper shadow of size 1. If $n < m' + n'$ then by similar arguments every element of $\M_1$ must be larger than every element of $\M_2$. At rank $n' - 1$ we observe that the set consisting of every element of rank $n'-1$ in $\M_2$ has an upper shadow of size $m' - 1$. The size of this subset is $m'$. Now we estimate the size of the upper shadow of an initial segment. The initial segment has $m'-1$ elements coming from $\M_2$, we have two possible cases here. The first is when the elements in $\M_2$ are two smaller segments of $\M_2$. In this case the size of the upper shadow is $(m' - 1 - k) + k + 1 = m'$. The second is when the elements in $\M_2$ are one segment. Again here the size of the upper shadow is $(m' - 1) + 1 = m'$. This is a contradiction. If $n = m' + n'$ we can take cases and proceed as above. This case is also equivalent to the poset considered in \Cref{lem: path and 2 X k}.
\end{proof}
\begin{ex}
\Cref{prop: wedge of path and box} is not true for disjoint unions. Consider the following example:

\begin{center}
\begin{tikzpicture}[scale=0.7, roundnode/.style={circle, draw=black, fill=white, thick, minimum size=6mm}]

    \node[roundnode] (0) at (0, 0) {0};
    \node[roundnode] (1) at (0, 1.5) {1};
    \node[roundnode] (2) at (0, 3) {2};
    \node[roundnode] (3) at (0, 4.5) {3};
    \node[roundnode] (4) at (0, 6) {4};
    
    \node[roundnode] (5) at (2.25, 0) {5};
    \node[roundnode] (6) at (1.5, 1.5) {6};
    \node[roundnode] (7) at (3, 1.5) {7};
    \node[roundnode] (8) at (1.5, 3) {8};
    \node[roundnode] (9) at (3, 3) {9};
    \node[roundnode] (10) at (2.25, 4.5) {10};

    \draw[-] (0) -- (1);
    \draw[-] (1) -- (2);
    \draw[-] (2) -- (3);
    \draw[-] (3) -- (4);
    
    \draw[-] (5) -- (6);
    \draw[-] (5) -- (7);
    \draw[-] (6) -- (8);
    \draw[-] (7) -- (8);
    \draw[-] (7) -- (9);
    \draw[-] (8) -- (10);
    \draw[-] (9) -- (10);
\end{tikzpicture}
\end{center}
Suppose a Macaulay order did exist. Then node 0 must come first at rank 0. This forces node 1 to come first at rank 1, node 2 at rank 2, node 3 at rank 3, and node 4 at rank 4. However, we should choose node 10 to come first at rank 3 since it has a smaller upper shadow. This is a contradiction.
\end{ex}

\begin{proof}[Proof of \Cref{introthm D} (2)]
By \Cref{ex: FP wedge} the ring in the statement can be written as a fiber product
\[
\frac{K[x_1,x_2]}{(x_1^{a_1}, x_2^{a_2})}\times_K \frac{K[y_1,y_2]}{(y_1^{b_1}, y_2^{b_2})}
\]
and by \Cref{prop: FP} the monomial poset of this ring is a wedge product of a $a_1\times a_2$ Clements--Lindstr\"om poset and a $b_1\times b_2$ Clements--Lindstr\"om poset. The claim follows from \Cref{prop: wedge box classification} and \Cref{prop: wedge of path and box}.

\end{proof}

\subsection{Diamond product of Clements-Lindstr\"om posets}

In this section we classify the Macaulay property for diamond products of Clements--Lindstr\"om posets. We explain the  steps towards the proof of this result.
We begin with the case when $\P$ and $\Q$ have the same dimension which we settle  in \Cref{thm: diamond product of n dimensional box sets}. Then we rule out most of the cases when the dimensions of $\P$ and $\Q$ differ.

\begin{prop} \label{prop: Not Mac test}
Let $\P$ and $\Q$ be $k$-dimensional Clements-Lindstr\"om posets of the same rank $s$ with no side length of $1$. Let $n < s - 1$, $A$ be the set of all monomials in $\P$ of rank $n$, and $B$ be the set of all monomials in $\Q$ of rank $n$. If $|A| < |B|$ and $|\uSdw A| \geq |\uSdw B|$ or vice versa, then $\P \diamond \Q$ is not Macaulay.
\end{prop}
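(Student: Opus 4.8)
The plan is to assume $\P\di\Q$ is Macaulay and derive a contradiction. First I would normalize the hypothesis: it is symmetric in $\P$ and $\Q$, so assume $a:=|A|<|B|=:b$ and $a':=|\uSdw A|\ge|\uSdw B|=:b'$. Since $|A|<|B|$ forces $n\ge 1$ (levels $0$ and $s$ of $\P\di\Q$ are singletons after the identifications), and since $\P,\Q$ are box posets — so $\uSdw_\P(\P_{[m]})=\P_{[m+1]}$ and $\uSdw_\Q(\Q_{[m]})=\Q_{[m+1]}$ whenever $m<s$ — the levels $n$ and $n+1$ of $\P\di\Q$ are, respectively, the disjoint unions $A\sqcup B$ and $\uSdw A\sqcup\uSdw B$, and $a'=|\P_{[n+1]}|$, $b'=|\Q_{[n+1]}|$; here we use $1\le n$ and $n+1\le s-1$ so that no identification occurs on these two levels. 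Fix a total order $\O$ witnessing the Macaulay property.

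The central step is to show that $\Seg_n(b)=B$ is forced. Because level $n$ has no covering relations between its $\P$-part and its $\Q$-part, every $b$-element subset $S$ of level $n$ satisfies $\uSdw S=\uSdw_\P(S\cap A)\sqcup\uSdw_\Q(S\cap B)$, and since $\P$ and $\Q$ are themselves Macaulay the minimum of $|\uSdw S|$ over all such $S$ equals $\min_{j+k=b}\bigl(\varphi_\P(j)+\varphi_\Q(k)\bigr)$, where $\varphi_\P(j)$ (resp.\ $\varphi_\Q(k)$) is the smallest upper-shadow size of a $j$-element subset of $\P_{[n]}$ (resp.\ a $k$-element subset of $\Q_{[n]}$). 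A standard property of Clements--Lindström orders is that these shadow-size functions have weakly decreasing successive differences, so $j\mapsto\varphi_\P(j)+\varphi_\Q(b-j)$ is concave and attains its minimum at $j=0$ or $j=a$; the value at $j=a$ is $a'+\varphi_\Q(b-a)\ge a'\ge b'$, the value at $j=0$ is $\varphi_\Q(b)=b'$, and equality between the two would force $a=b$. Hence the unique minimizer is $j=0$, realized only by $S=B$, and property (1) of the Macaulay property gives $\Seg_n(b)=B$. Consequently $\O$ orders all of $B$ above all of $A$ within level $n$; and since by property (2) the set $\uSdw\Seg_n(b)=\uSdw B$ is an initial segment of level $n+1$, $\O$ also orders all of $\uSdw B$ above all of $\uSdw A$ within level $n+1$.

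To finish, I would pass to rank $n-1$, which exists since $n\ge 1$. By property (2), $\uSdw\Seg_{n-1}(q)$ is an initial segment of level $n$ for every $q$; but a rank-$(n-1)$ element of $\P$ has non-empty upper shadow in $\P_{[n]}$ (as $n-1<s$), and an initial segment of level $n$ can meet $\P_{[n]}$ only after exhausting all of $\Q_{[n]}$. Tracking which elements enter $\Seg_{n-1}(q)$ and comparing shadow sizes with those of the $q$ largest elements of $\Q_{[n-1]}$, one shows that $\Seg_{n-1}(q)$ must stay inside $\Q_{[n-1]}$ until $\Q_{[n-1]}$ is exhausted, so $\Seg_{n-1}(|\Q_{[n-1]}|)=\Q_{[n-1]}$; that is, $\Q_{[n-1]}$ minimizes the upper shadow among all subsets of level $n-1$ of size $|\Q_{[n-1]}|$. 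Running the concavity/infimal-convolution computation of the previous paragraph at rank $n-1$, this minimality forces the endpoint value at the $\Q$-side to beat that at the $\P$-side, i.e.\ $|\P_{[n]}|=|\uSdw_\P\P_{[n-1]}|\ge|\uSdw_\Q\Q_{[n-1]}|=|\Q_{[n]}|$, contradicting $a<b$. I expect the two delicate points to be the rigidity step — making precise the concavity of the box shadow-size functions and the resulting infimal convolution — and the bookkeeping in this final descent; note that one genuinely must descend below rank $n$ rather than ascend above it, since the identification of the two top elements of $\P\di\Q$ makes its top two levels impose no constraints, so a symmetric argument using rank $n+2$ would fail precisely when $n=s-2$.
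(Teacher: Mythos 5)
Your strategy is genuinely different from the paper's: you try to force $\Seg_n(|B|)=B$ directly at rank $n$ via a shadow-minimization at that level and then descend to rank $n-1$, whereas the paper anchors the argument at rank $1$ (where the shadow of any $k$-subset of variables is elementary to count), concludes that the full rank-$1$ level of one factor is $\Seg_1 k$, propagates upward using ``shadows of initial segments are initial segments'' to get $\Seg_n|A|=A$, and then contradicts shadow-minimality by adjoining a single element of $B$ to $A$. Unfortunately your route has two genuine gaps. The first is the ``standard property'' underpinning your central step: the minimal-shadow functions of Clements--Lindstr\"om posets do \emph{not} have weakly decreasing successive differences once the dimension is at least $3$. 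In the $4\times4\times4$ box at level $2$ with the lex order $x>y>z$, the initial segments of sizes $1,2,3,4$ are $\{x^2\},\{x^2,xy\},\{x^2,xy,xz\},\{x^2,xy,xz,y^2\}$ with upper shadows of sizes $3,5,6,8$; the differences are $3,2,1,2$, because $y^2$ opens a fresh corner. So $j\mapsto\varphi_\P(j)+\varphi_\Q(b-j)$ need not be concave, you cannot reduce to the endpoints $j=0$ and $j=a$, and the forcing of $\Seg_n(b)=B$ is unjustified as written.

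The second gap is in the final descent and is fatal even where the central claim happens to hold. Your inequality ``this minimality forces $|\P_{[n]}|\geq|\Q_{[n]}|$'' tacitly requires $|\P_{[n-1]}|\geq|\Q_{[n-1]}|$, so that a competing set of size $|\Q_{[n-1]}|$ can be taken entirely inside $\P_{[n-1]}$; this can fail. Take $\P$ the $2\times2\times5$ box and $\Q$ the $3\times3\times3$ box, both of rank $s=6$; the hypothesis holds at $n=4$ with $|A|=4<6=|B|$ and $|\uSdw A|=3=|\uSdw B|$. Here one can check that $\Seg_4 6=B$ and $\Seg_3 6=\Q_{[3]}$ are indeed forced, but $|\P_{[3]}|=4<6=|\Q_{[3]}|$, and every $6$-element subset of level $3$ has upper shadow of size at least $6=|\uSdw\Q_{[3]}|$ (the relevant mixed competitor, all of $\P_{[3]}$ plus two elements of $\Q_{[3]}$, has shadow $4+3=7$, and the other mixtures are no better than $6$). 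So no contradiction materializes at rank $n-1$, your descent would have to continue with no clear termination, and the proof does not close. Note also that in this very example the paper's upward propagation yields $\Seg_4 4=A$ while your argument yields $\Seg_4 6=B$; both are validly forced there (which is itself a contradiction), but only the paper's route converts its intermediate claim into a one-step contradiction that works uniformly.
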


\begin{proof}
Assume for the sake of contradiction that $\P \diamond \Q$ is Macaulay. 
Choose $n$ to be the minimum value that satisfies this condition. Because $\P$ and $\Q$ have dimension $k$, then at rank $1$, $\P$ and $\Q$ both contain $k$ elements. If  a set $S$ contains $i$ elements from $\P$ and $j$ elements from $\Q$, where $i + j = k$, then $|\uSdw S| = (k + (k - 1) + \dots + (k - i)) + (k + (k-1) + \dots + (k - j))$. Then, the minimum upper shadow of $S$ occurs when either $i = k$, or $j = k$. Thus $\Seg_1 k$ is either all elements of $\P$ of rank $1$, or all elements of $\Q$ of rank $1$.  By taking repeated upper shadows of these sets, at some rank $m \leq n$ must be the minimal value in which $A_m$, the set of all elements in $\P$ of rank $m$, must have less elements then $B_m$, the set of all elements in $\Q$ of rank $m$. This implies that $\Seg_{m-1} |A_{m-1}| = A_{m-1}$ as it has the minimal upper shadow of sets containing $|A_{m-1}|$ elements by the argument above. Because upper shadows of initial segments are initial segments it follows that $\Seg_n |A| = A$. If $|\uSdw A| > |\uSdw B|$, then there exists some subset of $B$, $B'$, containing $|A|$ elements, and $|\uSdw B'| \leq |\uSdw B| < |\uSdw A|$ which contradicts that $\P \diamond \Q$ is Macaulay. Thus $|\uSdw A| = |\uSdw B|$. The set $\Seg_n(|A| + 1) = A \cup \{b\} = A'$ for some $b \in B$. Note that $|A'| \leq |B|$, and because $\uSdw (b)$ is disjoint from $\uSdw A$, it implies that $|\uSdw A'| > |\uSdw A| = |\uSdw B|$. We deduce that $|\uSdw \Seg_n (|A| + 1)| > |\uSdw B|$, which contradicts that $\P \diamond \Q$ is Macaulay.
\end{proof}

\begin{thm} \label{thm: diamond product of n dimensional box sets}
    If $\P$ and $\P'$ are $n$-dimensional Clements-Lindstr\"om posets 
    then $\P \diamond \P'$ is Macaulay if and only if $\P$ is isomorphic to $\P'$.
\end{thm}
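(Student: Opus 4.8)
The plan is to prove the two implications separately, invoking \Cref{thm: equiv union wedge diamond} for the direction ``$\P\cong\P'\Rightarrow\P\di\P'$ Macaulay'' and \Cref{prop: Not Mac test} for its converse. Throughout we may assume every side length of $\P$ and of $\P'$ is at least $2$, because a side of length $1$ is a one-point path that does not alter the poset; under this assumption the number of sides equals the dimension $n$ for each. Let $s$ be the common rank, write $\P$ as the $d_1\times\cdots\times d_n$ box and $\P'$ as the $d'_1\times\cdots\times d'_n$ box, and set $f_k=|\P_{[k]}|$, $g_k=|\P'_{[k]}|$ for $0\le k\le s$; these are the coefficients of the palindromic polynomials $\prod_i(1+t+\cdots+t^{d_i-1})$ and $\prod_i(1+t+\cdots+t^{d'_i-1})$ respectively.

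For the forward implication, assume $\P\cong\P'$, so $\P\di\P'\cong\P\di\P$. A box poset is Macaulay by Clements--Lindstr\"om \cite{CL}, additive by \cite{Clements1}, and satisfies $\uSdw(\P_{[r]})=\P_{[r+1]}$ for all $r\ge 0$: given $\bb\in\P_{[r+1]}$ with $r+1\ge 1$, choose $i$ with $b_i\ge 1$; then $\bb$ covers $\bb-\be_i\in\P_{[r]}$. Hence the implication (1)$\Rightarrow$(4) of \Cref{thm: equiv union wedge diamond} applies and $\P\di\P$ is Macaulay.

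For the converse I prove the contrapositive: assume $\P\not\cong\P'$ and deduce that $\P\di\P'$ is not Macaulay. First, $f\ne g$ as sequences. Indeed $f=g$ would force $\prod_i(t^{d_i}-1)=\prod_i(t^{d'_i}-1)$; factoring $t^d-1=\prod_{e\mid d}\Phi_e(t)$ into cyclotomic polynomials and using their irreducibility over $\mathbb{Q}$, this yields $\#\{i:e\mid d_i\}=\#\{i:e\mid d'_i\}$ for every $e\ge 1$, and M\"obius inversion over the divisibility lattice recovers $\#\{i:d_i=m\}=\#\{i:d'_i=m\}$ for every $m$, contradicting $\P\not\cong\P'$. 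Next, $f_0=g_0=1=f_s=g_s$ and $f_1=g_1=n=f_{s-1}=g_{s-1}$ (the last equality by palindromicity), so $s\ge 4$ and the nonempty set of indices where $f$ and $g$ differ is contained in $\{2,\dots,s-2\}$. Let $\ell$ be the largest such index; then $\ell\le s-2$, $f_{\ell+1}=g_{\ell+1}$, and after possibly interchanging $\P$ and $\P'$ we may assume $f_\ell<g_\ell$. Apply \Cref{prop: Not Mac test} with $n:=\ell<s-1$, $A=\P_{[\ell]}$ and $B=\P'_{[\ell]}$: then $|A|=f_\ell<g_\ell=|B|$, while $|\uSdw A|=|\P_{[\ell+1]}|=f_{\ell+1}=g_{\ell+1}=|\P'_{[\ell+1]}|=|\uSdw B|$ by the full-shadow property, so $|\uSdw A|\ge|\uSdw B|$ and \Cref{prop: Not Mac test} shows $\P\di\P'$ is not Macaulay.

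The only nonroutine ingredient is the claim that distinct multisets of side lengths give distinct level sequences, i.e.\ that the map $\{d_i\}\mapsto\prod_i\tfrac{t^{d_i}-1}{t-1}$ is injective on multisets of a fixed size; I would prove it exactly as sketched above, or equivalently by comparing coefficients in the logarithmic-derivative identity $\sum_i\tfrac{d_it^{d_i}}{1-t^{d_i}}=\sum_i\tfrac{d'_it^{d'_i}}{1-t^{d'_i}}$, which records the divisor-sum data $m\mapsto\sum_{d_i\mid m}d_i$. Everything else is bookkeeping with the palindromic Hilbert function plus the black-box application of \Cref{prop: Not Mac test}.
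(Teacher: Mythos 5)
Your proof is correct, and it rests on the same two pillars as the paper's: \Cref{thm: equiv union wedge diamond} (together with additivity of box posets from \cite{Clements1} and the full-shadow property) for the forward implication, and \Cref{prop: Not Mac test} applied to a pair of full levels for the converse. The difference is in how the critical rank for \Cref{prop: Not Mac test} is located. The paper orders the side lengths, takes the first index at which they differ, asserts that the level sizes then first differ at rank $m_i$ with $|A_{m_i}|<|B_{m_i}|$, and scans upward for the first rank $k$ with $|A_k|\geq |B_k|$, applying the proposition at rank $k-1$. You instead prove once and for all that non-isomorphic boxes of the same dimension have distinct level sequences --- via unique factorization of $\prod_i(t^{d_i}-1)$ into cyclotomic polynomials followed by M\"obius inversion --- and then take the \emph{largest} differing rank $\ell$, where palindromicity forces $\ell\leq s-2$ and $f_{\ell+1}=g_{\ell+1}$, so the hypotheses of \Cref{prop: Not Mac test} hold with equality of shadows. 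Your version has the merit of fully justifying a step the paper only asserts (that and where the level sequences differ); the cyclotomic argument is the one non-elementary ingredient and could be swapped for a direct inclusion--exclusion count at the first differing side length if one prefers to stay closer to the paper. Both routes are valid.
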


\begin{proof}
Let $\P$ be an $m_1 \times \cdots \times m_n$ Clements-Lindstr\"om poset and let $\P'$ be a $m_1'\times \cdots\times m'_n$ Clements-Lindstr\"om poset such that $\P$ and $\P'$ are of the same rank $s$. 

If $\P \cong \P'$, then by \Cref{thm: equiv union wedge diamond} $\P \diamond \P'$ is Macaulay. 
    
For the converse consider the contrapositive. Let $\P \ncong \P'$, then there must exist some $m_i \neq m_i'$. Choose $i$ such that $m_i \neq m_i'$ and for all $j < i$, $m_j = m_j'$. Without loss of generality, let $m_i < m_j$. The set $A_{m_i}$, of all monomials in $\P$ of rank $m_i$, must contain less elements then $B_{m_i}$, the set of all monomials in $\P'$ of rank $m_i$. Also, $|A_{s-1}| = |B_{s-1}|=n$ because  $\P$ and $\P'$ are $n$-dimensional. Suppose there is some index $k$ so that $m_i\leq k\leq s-1$ and $|A_k| \geq |B_k|$, then picking the least $k$ with this property gives that $A_{k-1}$ satisfies the requirements of \Cref{prop: Not Mac test}.  On the other hand, if there is no such $k$, then $A_{s-2}$ satisfies the requirements of \Cref{prop: Not Mac test}. Therefore $\P \diamond \P'$ is not Macaulay.
\end{proof}

Next we consider the case of diamond products of Clements-Lindstr\"om posets of different dimensions.

\begin{lem}\label{lem:diff by more than one dim not mac}
    Let $\P$ be an $n$-dimensional Clements-Lindstr\"om poset 
    and $\Q$ an $m$-dimensional Clements-Lindstr\"om poset where $m \leq n - 2$. 
     Then, $\P \diamond \Q$ is not Macaulay. 
\end{lem}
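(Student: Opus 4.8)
The plan is to assume $\P\di\Q$ is Macaulay and derive a contradiction by comparing, at level $s-2$ (where $s$ denotes the common rank, which the diamond product forces on $\P$ and $\Q$), the initial segment of size $|\P_{[s-2]}|$ with the set $\P_{[s-2]}$ itself. First I would normalize so that every side of $\P$ and of $\Q$ has length at least $2$; then $\P$ has exactly $n$ sides, $\Q$ has exactly $m\le n-2$ sides, so $m\ge 1$, $n\ge 3$, and $s\ge n\ge 3$. I will freely use that box posets have symmetric Whitney numbers, full upper shadows ($\uSdw_\P(\P_{[r]})=\P_{[r+1]}$, and likewise for $\Q$), and an order-reversing automorphism (complementation, which swaps level $k$ with level $s-k$ and interchanges $\uSdw$ with $\dSdw$). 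Write $g_r=|\P_{[r]}|$ and $h_r=|\Q_{[r]}|$, and let $\varepsilon_\P,\varepsilon_\Q$ be the number of sides of length $\ge 3$ of $\P,\Q$, so $g_2=\binom n2+\varepsilon_\P$, $h_2=\binom m2+\varepsilon_\Q$ and $\varepsilon_\Q\le m$.

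The first step is to show that the Macaulay order must make $\Q_{[r]}$ an initial segment of level $r$ for every $r<s$. For $r=1$ this is an estimate in the spirit of \Cref{prop: Not Mac test}: a size-$m$ subset $A$ of the first level with $A\ne\Q_{[1]}$ uses $p\ge 1$ rank-one elements of $\P$ and $m-p$ of $\Q$, and bounding the upper shadows below by the degree-two monomials $x_ix_j,\,y_iy_j$ they dominate gives $|\uSdw_{\P\di\Q}(A)|\ge\binom m2+p(n-p)$, whereas $|\uSdw_{\P\di\Q}(\Q_{[1]})|=h_2=\binom m2+\varepsilon_\Q$. Since $p(n-p)\ge m+1>m\ge\varepsilon_\Q$ for $1\le p\le m$ and $n\ge m+2$, the unique size-$m$ subset of smallest upper shadow is $\Q_{[1]}$, so $\Seg_1 m=\Q_{[1]}$. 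Property (2) of Macaulay posets then propagates this upward via $\uSdw_{\P\di\Q}(\Q_{[r]})=\Q_{[r+1]}$ (valid since $r+1<s$ prevents identification), yielding $\Q_{[r]}=\Seg_r h_r$ for all $r<s$. Consequently, within each level the elements of $\Q$ are exactly the largest ones, and the upper shadow of any set of $\P$-elements is disjoint from that of any set of $\Q$-elements.

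Next I would run the comparison at $r=s-2\ (\ge 1)$. Here $g_2>h_2$ (indeed $g_2\ge\binom n2>\binom{n-1}2\ge\binom m2+m\ge h_2$), so put $q:=g_2-h_2=g_{s-2}-h_{s-2}\ge 1$; by Step 1 the set $\Seg_{s-2}(g_2)$ is $\Q_{[s-2]}$ together with the $q$ largest elements $T$ of $\P_{[s-2]}$, and by disjointness of shadows
\[
\bigl|\uSdw_{\P\di\Q}\Seg_{s-2}(g_2)\bigr|=|\Q_{[s-1]}|+|\uSdw_\P(T)|=m+|\uSdw_\P(T)|.
\]
Comparing with the size-$g_2$ set $\P_{[s-2]}$, whose upper shadow is $\P_{[s-1]}$ of size $g_1=n$, Macaulay property (1) forces $m+|\uSdw_\P(T)|\le n$, i.e.\ $|\uSdw_\P(T)|\le n-m$. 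So it remains to show that \emph{every} $q$-element subset of $\P_{[s-2]}$ has upper shadow of size $>n-m$; applying complementation of $\P$ this is equivalent to showing every $q$-element $S\subseteq\P_{[2]}$ has $|\dSdw_\P(S)|>n-m$. I expect the genuine obstacle to be precisely \emph{locating} this comparison at level $s-2$ rather than at level $1$ (at level $1$ the analogous inequality can be an equality, e.g.\ for a product of a long three-dimensional box with a path), since once one is there the box computation is short: each element of $\P_{[2]}$ is $x_i^2$ (present only if side $i$ has length $\ge 3$, with $\dSdw_\P(x_i^2)=\{x_i\}$) or $x_ix_j$ (with $\dSdw_\P(x_ix_j)=\{x_i,x_j\}$), so a subset $S$ with $|\dSdw_\P(S)|=\ell$ has $|S|\le\binom\ell2+\min(\ell,\varepsilon_\P)$; as this bound is non-decreasing in $\ell$, it suffices to verify $\binom{n-m}2+\min(n-m,\varepsilon_\P)<q$. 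Substituting the formulas for $g_2,h_2$ and $\binom n2=\binom m2+\binom{n-m}2+m(n-m)$, the difference $q-\binom{n-m}2-\min(n-m,\varepsilon_\P)$ equals $m(n-m)+\max(0,\varepsilon_\P-(n-m))-\varepsilon_\Q\ge m(n-m)-m=m(n-m-1)\ge m>0$, using $n-m\ge 2$ and $m\ge 1$. This yields $|\uSdw_\P(T)|>n-m$, contradicting the previous paragraph, so $\P\di\Q$ is not Macaulay.
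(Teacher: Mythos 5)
Your argument is correct, and its first half coincides with the paper's: both proofs force $\Seg_1 m=\Q_{[1]}$ by a shadow count at rank one, propagate via property (2) of the Macaulay definition to conclude that $\Q_{[r]}$ is an initial segment for every $r<s$, and then derive the contradiction by comparing the initial segment of size $\lvert\P_{[s-2]}\rvert$ against the set $\P_{[s-2]}$ itself. Where you genuinely diverge is in showing that this initial segment has upper shadow of size larger than $n$. The paper does this by a case analysis on side lengths (whether $m=n-2$ with every side of $\P$ equal to $2$, or not), tracking explicit monomials such as $x_1^{d_1-1}\cdots x_i^{d_i-2}\cdots x_n^{d_n-2}$ and arguing about how many new shadow elements a few additional elements of $\P_{[s-2]}$ must contribute. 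You instead use rank-symmetry and the complementation anti-automorphism of box posets to transport the question to level $2$, where the elementary bound $\lvert S\rvert\le\binom{\ell}{2}+\min(\ell,\varepsilon_\P)$ for $S\subseteq\P_{[2]}$ with $\lvert\dSdw_\P(S)\rvert=\ell$ reduces everything to the single inequality $q>\binom{n-m}{2}+\min(n-m,\varepsilon_\P)$, which you verify cleanly. I checked the supporting computations (the rank-one estimate $\lvert\uSdw(A)\rvert\ge\binom{m}{2}+p(n-p)$ with $p(n-p)\ge m+1$, the identity $\binom{n}{2}=\binom{m}{2}+\binom{n-m}{2}+m(n-m)$, and the final inequality $m(n-m)-\varepsilon_\Q\ge m(n-m-1)>0$) and they are all sound. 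Your route buys a uniform, case-free treatment and in fact proves the stronger statement that \emph{every} $q$-element subset of $\P_{[s-2]}$ has upper shadow exceeding $n-m$, at the modest cost of invoking self-duality of box posets, a tool the paper itself uses elsewhere (in the proof of \Cref{lem: path and 2 X k}).
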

\begin{proof}
Let $\P$ be the monomial poset of  $k[x_1,\dots,x_n]/I$ where $I = (x_1^{d_1},\dots,x_n^{d_n})$. Also, let $\Q$ be the monomial poset of   $k[y_1,\dots,y_m]/J$ where $J = (y_1^{c_1},\dots,y_m^{c_m})$. Assume without loss of generality that $1< d_1 \leq \dots \leq d_n$, and $1< c_1 \leq \dots \leq c_m$. Assume for the sake of contradiction that $\P \diamond \Q$ is Macaulay. First, note that $|\uSdw\{y_1,\dots,y_m\}| \leq m +(m - 1) + \dots + 1$. Also, for any subset of $m$ of the variables $x_i$, the upper shadow of that set must contain at minimum $(n- 1) + (n - 2) + \dots + 2$ elements. Furthermore, any set of $m$ among the $x_i$ and $y_j$ has more elements in its upper shadow than $|\uSdw\{y_1,\dots,y_m\}|$. Thus, $\Seg_1 m = \{y_1,\dots,y_m\}$. Let $s$ be the rank of the maximal element of both $\P$ and $\Q$. By repeatedly taking upper shadows we get that the $t$ elements of rank $s-2$ of $\Q$ form an initial segment  in $\P\diamond \Q$ whose upper shadow has size $m$. 
    
Now consider the set $A$ of all elements of rank $s-2$ in $\P$. Suppose this set contains $r$ elements. Note that $r \geq (n-1) + (n-2) + \dots + 1$ and  also that $|\uSdw A| = n$. Notice that if $m = n - 2$, then $r\geq n-1+m$. 
    
If equality holds, that is $r= n-1+m$, then it follows that all $d_i = 2$ and since $m$ is a positive integer, it follows that $n \geq 3$. In this case, when all $d_i = 2$, then the upper shadow of any collection of $2$ elements in $\P$ must contain at least $3$ distinct elements  from $\uSdw \Seg_{s-2} t$. Thus it follows that $|\uSdw \Seg_{s-2} (t+2)| = m + 3 > n$. Since $\uSdw \Seg_{s-2}( t + 2) \subseteq \uSdw \Seg_{s-2} r$, we deduce that $|\uSdw \Seg_{s-2} r| > n=\uSdw \P_{[s-2]}$. This contradicts that $\P \diamond \Q$ is Macaulay. 
    
From the arguments above, the remaining possibilities are that either $m < n - 2$, or there exists some $d_i \neq 2$. Either way, it follows that $r\geq n+t$. Note that $\Seg_1 (m+1) = \Seg_1 m \cup \{x_i\}$ for some $i$. Then by taking repeated upper shadows, we get that 
\begin{equation*}
\Seg_{s-2} (t + n) = \Seg_{s-2} t \cup \{x_1^{d_1-2}\cdots x_i^{d_i-2}\cdots x_n^{d_n-1}, \ldots, x_1^{d_1-1}\cdots x_i^{d_i-2}\cdots x_n^{d_n-2}\}.
\end{equation*}
Notably, then $|\uSdw \Seg_{s-2} (t + n)| = m + n > n$. Thus, by a similar argument as before, we deduce $|\uSdw \Seg_{s-2} r| > n=\uSdw \P_{[s-2]}$. This contradicts that $\P \diamond \Q$ is Macaulay.
\end{proof}

\begin{lem} \label{lem: n-1 2s}
    Let $\P$ be an $n$-dimensional Clements-Lindstr\"om poset, and $\Q$ a $(n-1)$-dimensional Clements-Lindstr\"om poset for $n > 2$. If $\P \diamond \Q$ is Macaulay, then $\P$ has exactly $n-1$ sides of length $2$ and $\Q$ has no sides of length $2$.
\end{lem}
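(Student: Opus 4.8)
Write $\P$ as the monomial poset of $K[x_1,\dots,x_n]/(x_1^{d_1},\dots,x_n^{d_n})$ with $2\le d_1\le\cdots\le d_n$ and $\Q$ as the monomial poset of $K[y_1,\dots,y_{n-1}]/(y_1^{c_1},\dots,y_{n-1}^{c_{n-1}})$ with $2\le c_1\le\cdots\le c_{n-1}$; since $\P\di\Q$ is defined, both posets have the same top rank $s=\sum_i(d_i-1)=\sum_j(c_j-1)$, and $s\ge n$ because $\P$ is $n$-dimensional. Set $a=|\{i:d_i=2\}|$ and $b=|\{j:c_j=2\}|$; the goal is to show that $\P\di\Q$ being Macaulay forces $a=n-1$ and $b=0$. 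First I would record the shadow bookkeeping that drives the argument: every level of a box poset has full upper shadow (the next level); the upper shadow of a set of $i$ of the generators, $p$ of which have exponent bound $\ge 3$, has size $\binom i2+i(\dim-i)+p$; a rank-$(s-2)$ element of the $n$-dimensional box has upper shadow of size $1$ exactly when it is a \emph{corner}, obtained from the top element by lowering one coordinate by $2$ (there are $n-a$ corners, and distinct corners have distinct shadow points), and of size $2$ otherwise; and since shadows of $\P$-elements and of $\Q$-elements are disjoint away from the shared top, $|\uSdw_{\P\di\Q}A|=|\uSdw_\P(A\cap\P)|+|\uSdw_\Q(A\cap\Q)|$ for every $A$ contained in a level below $s-1$.

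The plan is to prove $b=0$ first and then $a=n-1$, in each case exhibiting a subset of some level whose upper shadow is strictly smaller than that of the corresponding initial segment. For $b=0$: suppose $b\ge 1$. Then $|\uSdw_{\P\di\Q}\Q_{[1]}|=\binom{n-1}2+(n-1-b)$, whereas for every rank-$1$ set $A$ meeting $\P$ one has $|\uSdw A|\ge h(|A\cap\P|)$, where $h(i)=\binom i2+i(n-i)+\binom{n-1-i}2+(n-1-i)i$ satisfies $h(i)\ge\binom{n-1}2+(n-1)$ for $1\le i\le n-1$; hence $\Q_{[1]}$ is the unique minimum-shadow $(n-1)$-element subset of level $1$, so $\Seg_1(n-1)=\Q_{[1]}$ in every Macaulay order. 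Feeding this into Macaulay condition (2) and inducting on rank (using full shadows) gives $\Q_{[r]}=\Seg_r(|\Q_{[r]}|)$ for all $r$, i.e.\ the monomials of $\Q$ form an initial segment in each level. Since $|\P_{[s-2]}|-|\Q_{[s-2]}|=n-a+b\ge 0$, the initial segment $\Seg_{s-2}(|\P_{[s-2]}|)$ consists of all of $\Q_{[s-2]}$ together with the top $m:=n-a+b$ monomials of $\P_{[s-2]}$, and its upper shadow contains $\Q_{[s-1]}$ (all $n-1$ elements) together with the $\P$-shadow of those $m$ monomials, which has size at least $\min(m,2)$. As $|\uSdw\P_{[s-2]}|=|\P_{[s-1]}|=n$, the case $m\ge 2$ contradicts condition (1). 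Hence $m\le 1$; with $b\ge 1$ this forces $a=n$ and $b=1$, so $\P$ has all sides of length $2$ and top rank $s=n$, whence $\Q$ is an $(n-1)$-dimensional box of rank $n$, possible only for $\Q=2\times\cdots\times 2\times 3$, which gives $b=n-2$; thus $n=3$, $\P=2\times2\times2$, $\Q=2\times3$, and a direct count at level $1$ gives $|\uSdw\Seg_1(n)|=\binom n2+1>\binom n2=|\uSdw\P_{[1]}|$, again contradicting condition (1). Therefore $b=0$.

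Now assume $b=0$. If $a\le n-2$, then exactly as above $\Q_{[1]}$ is the unique minimum-shadow $(n-1)$-element subset of level $1$ (a competing $(n-1)$-subset of $\P_{[1]}$ would require $a\ge n-1$), so the monomials of $\Q$ again form an initial segment in each level; but now $m=n-a+b=n-a\ge 2$, and the level-$(s-2)$ comparison of the previous paragraph again contradicts condition (1). Hence $a\ge n-1$. Finally, $a=n$ is impossible: it would make the top rank $s=n$, while $\Q$ being $(n-1)$-dimensional with every side of length $\ge 3$ gives $s=\sum_j(c_j-1)\ge 2(n-1)>n$ since $n>2$. Hence $a=n-1$. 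The main obstacle throughout is the near-top comparison: in levels $s-1$ and $s-2$ the upper shadows have size at most the dimension, so the Macaulay inequalities are tight and the argument turns on counting corners precisely; in the residual ties one must fall back on Macaulay condition (2), that upper shadows of initial segments are initial segments, rather than on cardinalities alone.
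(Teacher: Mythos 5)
Your proof is correct; I checked the shadow counts ($h(i)=\binom{n-1}{2}+ij+i$ with $j=n-1-i$, the identity $|\P_{[s-2]}|-|\Q_{[s-2]}|=n-a+b$, and the corner count at rank $s-2$) and they all hold. Your route differs from the paper's in its case decomposition. The paper splits on whether the largest rank-one element of a putative Macaulay order lies in $\Q$ or in $\P$: in the first case it propagates the initial segment $\Q_{[1]}$ up to rank $s-2$ and reaches a contradiction through several ad hoc sub-cases on how many $d_i$ equal $2$; in the second case it extracts $d_k=2$ and $c_j\neq 2$ from singleton shadow comparisons at rank one, forces $a\geq n-1$ by comparing $\Seg_1(m+1)$ with $\{y_1,\dots,y_{m+1}\}$, and excludes $a=n$ by the rank equation. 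You instead split directly on the parameters $a$ and $b$, observing that whenever $b\geq 1$ or $a\leq n-2$ the set $\Q_{[1]}$ is the \emph{unique} minimizer of the upper shadow among $(n-1)$-element subsets of rank one, so it must equal $\Seg_1(n-1)$ in any Macaulay order without reference to where the rank-one maximum sits; the contradiction then comes from a single uniform computation at rank $s-2$. This buys a cleaner, more unified near-top argument, at the cost of the residual case $a=n$, $b=1$, which you correctly pin down to $n=3$, $\P=2\times2\times2$, $\Q=2\times3$ and eliminate at rank one. Both arguments ultimately rest on the same three tools — rank-one shadow formulas, propagation of initial segments via full level shadows, and the tightness of upper shadows two levels below the top — so the proofs are cousins, but yours is organized around the invariants $(a,b)$ rather than around the total order, which I find easier to verify.
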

\begin{proof}
    Let $\P$ be the monomial poset of  $k[x_1,\dots,x_n]/I$ where $I = (x_1^{d_1},\dots,x_n^{d_n})$. Also, let $\Q$ be the monomial poset of $k[y_1,\dots,y_{n-1}]/J$ where $J = (y_1^{c_1},\dots,y_m^{c_{n-1}})$ where for all $d_i > 1$, and $c_j > 1$. 
    Assume for the sake of contradiction that $\P\diamond \Q$ is Macaulay with total order such that the largest element of rank $1$ is in $\Q$. 
    As in the proof of \Cref{lem:diff by more than one dim not mac} it follows that $\Seg_1 (n-1) = \{y_1,\dots,y_{n-1}\}$. Let the maximal element of both $\P$ and $\Q$ be rank $s$. Then, by taking repeated upper shadows of the initial segment, we get that $\Seg_{s-2}t$ is the initial segment with $t$ elements where $t$ is the number of elements in $\Q$ of rank $s-2$. Let $A \subseteq (\P \diamond \Q)_{[s-2]}$ be the set of all elements of rank $s-2$ in $\P$. Suppose $A$ contains $r$ elements. Note that $t \leq n-1 + \dots + 1$, and that $r \geq n-1 + \dots + 1$. While this is true, if $r = n-1 + \dots + 1$, then for all $j$,  $d_j = 2$. Because $\P$ and $\Q$ have a maximal element of the same rank, then $d_1 + \dots + d_n - 1 = c_1 + \dots + c_{n-1}$, and because all $c_i > 1$, and $n > 2$, then there exists some $c_i = 2$. Thus, $t \leq n-2 + n-2 + \dots + 1$. Therefore, $t+1 \leq r$ whenever all $d_j = 2$. Whenever all $d_j = 2$, then for any element of rank $s-2$ in $\P$ the upper shadow of that element has $2$ elements distinct from the $\uSdw \Seg_{s-2} t$. Thus, $|\uSdw \Seg_{s-2} (t + 1)| = n+1$. Thus, because $\Seg_{s-2} (t+1) \subseteq \Seg_{s-2} r$, we have  $|\Seg_{s-2}r| > n+1$, but $|\uSdw A| = n$. This contradicts that $\P \diamond \Q$ is Macaulay. Thus, at least one $d_j \neq 2$. If there are at least $2$ values of $j$ such that $d_j \neq 2$, then $r \geq n + n - 1 + n - 3 + \dots + 1$. Thus, $t < r+2$. Also, because any $2$ elements in $\P$ must have an upper shadow of at least $2$ elements which are distinct of $\uSdw \Seg_{s-2} t$ we deduce $|\uSdw \Seg_{s-2} (t+2)| > n+1$. Thus, by similar logic as before, $|\uSdw \Seg_{s-2} r| > n+1$ which contradicts that $\P\diamond \Q$ is Macaulay. 
    
    Alternatively, assume for the sake of contradiction that $\P \diamond \Q$ is Macaulay with total order such that the largest element of rank $1$ is in $\P$. Let the maximal element in rank $1$ be $x_k$. Note that if for all  $i$, $d_i \neq 2$, then it follows that $|\uSdw \{x_k\}| = n$, but $|\uSdw \{y_1\}| = n-1$. Thus, $d_k = 2$. If any $c_j = 2$, then $|\uSdw \{y_j\}| = n-2$, which would contradict that $\P \diamond \Q$ is Macaulay. Thus, for all $j$ we have $c_j \neq 2$. Note that if there exists  $i$ such that $d_i = 2$, and if $m < n-1$, then it follows that 
    \begin{eqnarray*}
        |\uSdw \Seg_{1}(m+1)| &=& n - 1 + \dots + n - 1 - m + n - 1 - m\\
        |\uSdw \{y_1 , \dots , y_{m+1}\}| &=& n - 1 + \dots + n - 1 - m + n - 2 - m.
    \end{eqnarray*}
    This contradicts that $\P \diamond \Q$ is Macaulay. 
    This same logic does not hold for if $m = n - 1$. If for all $i, d_i = 2$, then because $\P$ and $\Q$ are the same rank, it follows that $d_1 + \cdots + d_n - 1 = c_1 + \cdots + c_{n-1}$. Thus, for all $n > 2$,  there must exist a $c_j = 2$, which would contradict that $\P \diamond \Q$ is Macaulay.
\end{proof}

\begin{lem} \label{lem: geq 5 not Mac}
Let $\P$ be a $n$-dimensional Clements-Lindstr\"om poset and $\Q$ a $(n - 1)$-dimensional Clements-Lindstr\"om poset for $n >2$. Then, $\P \diamond \Q$ is not Macaulay.
\end{lem}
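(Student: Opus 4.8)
The plan is to argue by contradiction: assuming $\P\di\Q$ is Macaulay I would first use \Cref{lem: n-1 2s} to pin down the shapes of $\P$ and $\Q$, then show that the bottom level of $\P\di\Q$ is forced to be an initial segment, propagate this upward via the ``shadow of an initial segment is an initial segment'' axiom, and finally violate the other Macaulay axiom on level $s-2$. Concretely: suppose $\P\di\Q$ is Macaulay. By \Cref{lem: n-1 2s}, $\P$ is the box poset $2\times\cdots\times 2\times d$ with $n-1$ factors equal to $2$, so its monomial variables are $x_1,\dots,x_{n-1}$ (the sides of length $2$) and $x_n$ (the remaining side, of length $d\ge 3$ since $\P$ is $n$-dimensional), and $\Q$ is the box poset $c_1\times\cdots\times c_{n-1}$ with variables $y_1,\dots,y_{n-1}$ and every $c_i\ge 3$. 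Write $s=n+d-2\ge 4$ for the common rank, and view all monomials of $\P$ and of $\Q$ inside $\P\di\Q$, which glues the two least elements into one and the two top elements into one.

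\emph{Step 1 (the bottom level is forced).} I would show that any total order witnessing the Macaulay property must have $\Seg_1(n)=\{x_1,\dots,x_n\}$. An $n$-element subset $S$ of level $1$ consists of $i$ of the $x_j$ and $n-i$ of the $y_j$ for some $1\le i\le n$ (there being only $n-1$ of the $y_j$). Because $x_j^2=0$ for $j\le n-1$, the upper shadow of any $i$ of the $x_j$ has at least $\binom n2-\binom{n-i}2$ elements, with equality for $\{x_1,\dots,x_i\}$ when $i\le n-1$; because every $c_j\ge 3$, the upper shadow of any $j\ge 1$ of the $y_j$ has at least $j+\binom{n-1}2-\binom{n-1-j}2$ elements. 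These shadows are disjoint in $\P\di\Q$, so for $1\le i\le n-1$ the shadow of $S$ has at least
\[
\binom n2-\binom{n-i}2+\binom{n-1}2-\binom{i-1}2+(n-i)
\]
elements; using $\binom{n-i}2+\binom{i-1}2\le\binom{n-1}2$ together with $n>2$ one checks this exceeds $\bigl|\uSdw\{x_1,\dots,x_n\}\bigr|=\bigl|\P_{[2]}\bigr|=\binom n2+1$. Hence $\{x_1,\dots,x_n\}$ is the \emph{unique} minimum-shadow $n$-subset of level $1$, so it equals $\Seg_1(n)$.

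\emph{Step 2 (propagate upward, then contradict).} Since the monomials of $\P$ strictly between its least and top elements are disjoint from those of $\Q$ in $\P\di\Q$, applying $\uSdw_{\P\di\Q}$ to $\P_{[k]}$ just returns $\P_{[k+1]}$. Iterating the second Macaulay axiom from $\Seg_1(n)=\P_{[1]}$ then gives $\Seg_k\bigl(|\P_{[k]}|\bigr)=\P_{[k]}$ for all $k$, in particular for $k=s-2$. By the symmetry of the rank function of the box $\P$, $|\P_{[s-2]}|=|\P_{[2]}|=\binom n2+1$ and $|\P_{[s-1]}|=|\P_{[1]}|=n$, so $\Seg_{s-2}\bigl(\binom n2\bigr)$ is $\P_{[s-2]}$ with exactly one element deleted. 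In the box $\P=2\times\cdots\times 2\times d$ every element of $\P_{[s-1]}$ has at least $n-1\ge 2$ positive coordinates, hence is covered by at least $n-1$ elements of $\P_{[s-2]}$; deleting one element of $\P_{[s-2]}$ therefore cannot shrink its upper shadow, so $\bigl|\uSdw\Seg_{s-2}\bigl(\binom n2\bigr)\bigr|=|\P_{[s-1]}|=n$. On the other hand the $\binom n2$ monomials of $\Q$ of rank $s-2$ (again $|\Q_{[s-2]}|=|\Q_{[2]}|=\binom n2$ by symmetry) have upper shadow $\Q_{[s-1]}$ of size $|\Q_{[1]}|=n-1<n$. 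Thus the size-$\binom n2$ initial segment of level $s-2$ does not have the smallest upper shadow, contradicting the first Macaulay axiom; hence $\P\di\Q$ is not Macaulay.

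The propagation and the coordinate count in Step 2 are routine, and the reduction is immediate from \Cref{lem: n-1 2s}. The hard part will be Step 1 — showing that $\{x_1,\dots,x_n\}$ is the \emph{unique} minimizer of the upper shadow among $n$-subsets of level $1$ — because this uniqueness is exactly what makes the order rigid enough to carry $\P_{[k]}=\Seg_k(|\P_{[k]}|)$ all the way up to level $s-2$. Handling it requires the full case split over how an $n$-subset distributes between the two boxes, together with the binomial inequality above.
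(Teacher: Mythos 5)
Your proof is correct, and it takes a genuinely different route from the paper's. Both arguments start the same way: invoke \Cref{lem: n-1 2s} to force $\P \cong 2\times\cdots\times 2\times d$ ($d\ge 3$) and $\Q$ to have all sides $\ge 3$, and then force $\Seg_1 n=\{x_1,\dots,x_n\}$ (your binomial computation here is clean and checks out, since the relevant quantity $\binom n2-\binom{n-i}2-\binom i2=i(n-i)\ge n-1\ge 2$). After that the two proofs diverge. The paper splits into $n\ge 5$ versus $3\le n\le 4$: for $n\ge 5$ it builds an ad hoc surjective-but-not-injective map $f:\Q_{[3]}\to\P_{[3]}$ (whose definition needs $n\ge 5$) to show $|\Q_{[3]}|>|\P_{[3]}|$, then locates a ``crossing level'' $k$ where $|\P_{[k]}|<|\Q_{[k]}|$ but $|\P_{[k+1]}|\ge|\Q_{[k+1]}|$; the small-$n$ cases are then handled by lengthy explicit computations with specific side lengths. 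You instead jump directly to level $s-2$ and exploit the rank-symmetry of box posets to get $|\P_{[s-2]}|=\binom n2+1$, $|\Q_{[s-2]}|=\binom n2$, $|\uSdw\Q_{[s-2]}|=n-1$, while the size-$\binom n2$ initial segment (all of $\P_{[s-2]}$ minus one element) still has full upper shadow $\P_{[s-1]}$ of size $n$ because each element of $\P_{[s-1]}$ covers at least $n-1\ge 2$ elements of $\P_{[s-2]}$. This yields a single uniform contradiction for all $n>2$ and eliminates both the case split and the explicit computations; it is arguably a cleaner proof than the one in the paper. Two small remarks: the phrase ``is covered by at least $n-1$ elements of $\P_{[s-2]}$'' should read ``covers at least $n-1$ elements of $\P_{[s-2]}$'' (the covering relation goes downward from rank $s-1$ to rank $s-2$, which is the direction your deletion argument actually uses, so this is only a wording slip); and it is worth stating explicitly that $1\le s-2$ and $s-1<s$, so that $\P_{[s-2]},\Q_{[s-2]}$ and $\P_{[s-1]},\Q_{[s-1]}$ are genuinely disjoint pairs in $\P\di\Q$ — this holds since $s=\sum(c_i-1)\ge 2(n-1)\ge 4$.
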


\begin{proof}
Let $\P$ be the monomial poset of $k[x_1,\dots,x_n]/I$ where $I = (x_1^{d_1},\dots,x_n^{d_n})$ for $n \geq 5$. Also, let $\Q$ be the monomial poset of $k[y_1,\dots,y_{n-1}]/J$ where $J = (y_1^{c_1},\dots,y_{n-1}^{c_{n-1}})$ where for all $d_i \neq 1$, and $c_j \neq 1$, and $\P$ and $\Q$ are both of rank $s$. 
    
\emph{Case 1:} $n \geq 5$. 
Assume for the sake of contradiction that $\P \diamond \Q$ is Macaulay. Note that by \Cref{lem: n-1 2s}, $n-1$ of $d_1,\dots,d_n$ must be $2$, and also $c_j \neq 2$ for all $j$. Without loss of generality, let $d_1=\cdots=d_{n-1} = 2$, and $d_n \neq 2$. Assume that the maximal element of rank $1$ is $y_j$. Then, it follows as in the proof of \Cref{lem: geq 5 not Mac} that $\Seg_1 n-1 = \{y_1,\dots,y_{n-1}\}$. Note that $\Seg_1 n = \Seg_1 (n-1) \cup \{x_i\}$ for some $i$. Because of the definition of a diamond product, $\uSdw \{x_i\}$ must be distinct from $\uSdw \Seg_1 (n-1)$. Thus, we get that $|\uSdw \Seg_1 n| = n - 1 + \dots + 1 + n - 1$. Notably, if we take the set $A = \{x_1,\dots,x_n\}$ we get that $|\uSdw A| = n-1 + \dots + 1$. This contradicts that $y_j$ is the largest elements of rank $1$.
    
Now, assume that $x_i$ is the largest element of rank $1$.  Then we get that $\Seg_1 n = \{x_1, \dots, x_n\}$ by a similar argument as above. Now, by taking repeated upper shadows, we get that $\P_{[3]} = \Seg_3 t = \{x_1x_2x_3,\dots,x_n^3\}$, where $t$ is the number of elements of rank $3$ in $\P$. 
Using the hypothesis $n\geq 5$, we define a function $f: \Q_{[3]} \to \P_{[3]}$ by $f(y_jy_k) = f(y_j)f(y_k)$ for $j \neq k$, $f(y_j) = x_j$, $f(y_j^2) = x_jx_n$, and $f(y_j^3) = x_jx_n^2$ with the exception that $f(y_1^2y_2) = x_n^3$, $f(y_3^2y_j) = x_jx^2_n$, and $f(y_2^2y_4) = x_3x_n^2$. Note that $f$ is surjective, and because $f(y_1y_4^2) = f(y_1^2y_4)$, then $f$ is not injective. Thus, 
$|\Q_{[3]}| > |\P_{[3]}|$. Also, because $|\P_{[s-1]}| = n$, and $|\Q_{[s-1]}| = n-1$, there must exist some $k$ where $|\P_{[k]}| < |\Q_{[k]}|$, but $|\P_{[k+1]}| \geq |\Q_{[k+1]}|$. Note that $\P_{[k]} = \Seg_k |\P_{[k]}|$. Also,  $A= \Seg_k (|\P_{[k]}| + 1)$ must contain some element in $\Q$ of rank $k$. Because all elements of $\Q$ have a distinct upper shadow from the elements in $\P$, it follows that 
\begin{equation*}
|\uSdw A| > |\uSdw \P_{[k]}| =|\P_{[k+1]}|\geq  |\Q_{[k+1]}| \geq |\uSdw \Q_{[k]}|.
\end{equation*}
Thus, $|\uSdw \Seg_k |\Q_{[k]}|| \geq |\uSdw \Seg_k (|\P_{[k]}| + 1)| > |\uSdw \Q_{[k]}|$. This contradicts that $\P \diamond \Q$ is Macaulay.
      
\emph{Case 2:} $3 \leq n\leq 4$.

Note that the largest element of rank $1$ must be in $\P$. Without loss of generality let the largest element of rank $1$ be $x_1$. Assume that $n = 3$. Then, $\Seg_1 2 = \{x_1,x_2\}$. Assume that $c_1 = 3$, and $c_2 = m$. Because $\P$ and $\Q$ are of the same rank we have $3 + m = 2 + 2 + d_3 - 1$. Thus, $d_3 = m$. Now, by taking repeated upper shadows of $\Seg_1 2$, we get that $\Seg_{m-1} 3 = \{x_1x_2x_3^{m-3},x_1x_3^{m-2},x_2x_3^{m-2}\}$, and that $\uSdw \Seg_{m-1} 3 = \{x_1x_2x_3^{m-2},x_1x_3^{m-1},x_2x_3^{m-1}\}$. Also, if we take the set $A = \{y_1^2y_2^{m-3},y_1y_2^{m-2},y_2^{m-1}\}$, then $\uSdw A = \{y_1^2y_2^{m-2},y_1y_2^{m-1}\}$. This contradicts that $P \diamond Q$ is Macaulay. Thus, $c_1 > 3$. 
 
 Assume that $c_1 \geq 5$, and $c_1 < c_2$. Then we get that $\{y_1^4y_2,y_1^3y_2^2,y_1^2y_2^3,y_1y_2^4,y_2^5\} \subseteq \Q_{[5]}$, and  $\P_{[5]} = \{x_1x_2x_3^3,x_1x_3^4,x_2x_3^4,x_3^5\}$. Then, because $|\P_{[s-1]}| < |\Q_{[s-1]}|$ it follows that $\P \diamond \Q$ is not Macaulay. Thus, if $c_1 = 5$, then $c_2 = 5$. Then, note that $\Q_{[5]} = \{y_1^4y_2,y_1^3y_2^2,y_1^2y_2^3,y_1y_2^4\}$, and $\uSdw \Q_{[5]} = \{y_1^4y_2^2,y_1^3y_2^3,y_1^2y_2^4\}$. Moreover, because $\P$ and $\Q$ are both same rank, then $d_3 = 7$. Therefore, it follows that $\Seg_5 4 = \P_{[5]} = \{x_1x_2x_3^3,x_1x_3^4,x_2x_3^4,x_3^5\}$ and $\uSdw \P_{[5]} = \{x_1x_2x_3^4, x_1x_3^5, x_2x_3^5, x_3^6\}$, but this contradicts that $\P \diamond \Q$ is Macaulay. Thus, $c_1 = 4$. Also, let $c_2 = m$. Because $4 + m = 2 + 2 + d_3 - 1$ it follows that $d_3 = m+1$. Then
 \begin{eqnarray*}
 \P_{[m-1]} &=& \Seg_{m-1} 4 = \{x_1x_2x_3^{m-3},x_1x_3^{m-2},x_2x_3^{m-2},x_3^{m-1}\}\\
 \uSdw \P_{[m-1]} &=&\{x_1x_2x_3^{m-2},x_1x_3^{m-1},x_2x_3^{m-1},x_3^m\}\\
  \Q_{[m-1]} &=& \{y_1^3y_2^{m-4},y_1^2y_2^{m-3},y_1y_2^{m-2},y_2^{m-1}\}\\
  \uSdw \Q_{[m-1]} &=&\{y_1^3y_2^{m-3},y_1^2y_2^{m-2},y_1y_2^{m-1}\}.
  \end{eqnarray*}
This contradicts that $P \diamond Q$ is Macaulay. If $n = 3$, then $P \diamond Q$ is not Macaulay. Now assume that $n = 4$. Note that if $c_3 > 3$, then 
\begin{eqnarray*}
\Q_{[3]} &=&  \{y_1y_2y_3,y_1^2y_2,y_1^2y_3,y_1y_2^2,y_2^2y_3,y_1y_3^2,y_2y_3^2,y_3^3\}\\
\P_{[3]} &=&  \{x_1x_2x_3,x_1x_2x_4,x_1x_3x_4,x_2x_3x_4,x_1x_4^2,x_2x_4^2,x_3x_4^2\}.
\end{eqnarray*}
Then, because $|\Q_{[3]}| > |\P_{[3]}|$, and $|\Q_{[s-1]}| < |\P_{[s-1]}|$ means that $\P \diamond \Q$ is not Macaulay. Thus, $c_1 = c_2 = c_3 = 3$. Thus, because $3 + 3 + 3 = 2 + 2 + 2 + d_4 - 1$, then $d_4 = 2$ which contradicts \Cref{lem: n-1 2s}. Thus, if $n = 4$, then $\P \diamond \Q$ is not Macaulay. Therefore, for $n > 2$, $\P \diamond \Q$ is not Macaulay.
\end{proof}

We have now reduced to the case of Clements-Lindstr\"om posets of dimension $1$ and at most $2$, respectively, which we handle next.

\begin{lem}\label{lem: path and 2 X k}
Let $\P$ be a path poset of rank $n$, and $\Q$ be a Clements-Lindstr\"om poset with the same rank. Then $\P \di \Q$ is Macaulay if and only if $\Q$ is also a path poset or $\Q$ is $2\times k$ for some $k \in \N$.
\end{lem}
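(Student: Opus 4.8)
The plan is to prove the two implications separately; write $n$ for the common rank of $\P$ and $\Q$.

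($\Leftarrow$) There are two cases. If $\Q$ is a path poset then $\Q\cong\P$, so $\P\di\Q=\P\di\P$ is Macaulay by \Cref{prod of paths} (equivalently by \Cref{thm: diamond product of n dimensional box sets} or \Cref{thm: equiv union wedge diamond}, as paths are Macaulay and additive). If $\Q=2\times k$ then matching ranks forces $k=n$, and I would equip $\P\di\Q$ with the union simplicial order (\Cref{defn: union simplicial order}) whose first factor is the path $\P$ and whose second factor $2\times n$ carries the lexicographic order; this orders each level $d$ with $1\le d\le n-1$ as $xy^{d-1}>y^{d}>p_d$, where $p_d$ is the path element. One then checks the two Macaulay conditions directly: the only work is at a level $d$ with $1\le d\le n-2$, where the upper shadows $\uSdw\{xy^{d-1}\}=\{xy^{d}\}$, $\uSdw\{y^{d}\}=\{xy^{d},y^{d+1}\}$ and $\uSdw\{p_d\}=\{p_{d+1}\}$ are nested and disjoint in exactly the pattern that makes initial segments have smallest upper shadow and makes upper shadows of initial segments into initial segments. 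At levels $0$, $n-1$, $n$ both conditions are immediate because every nonempty subset of level $n-1$ has upper shadow equal to the unique top element.

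($\Rightarrow$) Suppose $\P\di\Q$ is Macaulay and let $e=\dim\Q$. Since $\P$ is one-dimensional, if $e\ge 3$ the factors differ in dimension by at least two, so $\P\di\Q$ is not Macaulay by \Cref{lem:diff by more than one dim not mac}; hence $e\le 2$, and $e\le 1$ makes $\Q$ a path. So assume $\Q=m\times k$ with $3\le m\le k$, hence $n=m+k-2\ge 4$, and aim for a contradiction. The first step is to show that the path element $p_d$ is the largest element of level $d$ for each $1\le d\le n-1$: since $m,k\ge 3$ both variables of $\Q$ have upper shadow of size $2$, while $\uSdw\{p_1\}=\{p_2\}$ has size $1$, so Macaulay condition (1) forces $\Seg_1 1=\{p_1\}$, and condition (2) with $\uSdw\{p_d\}=\{p_{d+1}\}$ propagates this upward by induction. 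The second step is to examine level $n-2$, which in $\Q$ equals $\{x^{m-3}y^{k-1},\,x^{m-2}y^{k-2},\,x^{m-1}y^{k-3}\}$; the upper shadows of these three monomials within $\Q$ have sizes $1,2,1$ and the union of any two of them is all of $\Q_{[n-1]}=\{x^{m-2}y^{k-1},\,x^{m-1}y^{k-2}\}$. Taking $A=\Q_{[n-2]}\subseteq(\P\di\Q)_{[n-2]}$ we get $\lvert A\rvert=3$ and $\lvert\uSdw A\rvert=2$, while $\Seg_{n-2}3=\{p_{n-2}\}\cup\{\text{the two largest elements of }\Q_{[n-2]}\}$ has $\uSdw\Seg_{n-2}3=\{p_{n-1}\}\cup\Q_{[n-1]}$ of size $3$; so $\lvert\uSdw\Seg_{n-2}3\rvert=3>2=\lvert\uSdw A\rvert$, contradicting condition (1). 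Hence $m=2$ and $\Q=2\times k$.

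I expect the main obstacle to be the ($\Rightarrow$) direction, specifically locating where the Macaulay property must fail: comparisons at rank $1$ or at the top two ranks give no contradiction, and the argument works only after one first pins down that the path strand sits at the top of every level (which is what guarantees $p_{n-2}\in\Seg_{n-2}3$) and then, at rank $n-2$, compares a full $\Q$-level against the corresponding initial segment, using that a full $\Q$-level already has "saturated" upper shadow two ranks below the maximum. The ($\Leftarrow$) direction is routine but does require choosing the correct total order: it is the union simplicial order with the path as the \emph{smaller} factor that works, while placing the path on top fails.
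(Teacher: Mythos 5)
Your proof is correct and follows essentially the same route as the paper's: the same union simplicial order with the box factor on top for the backward direction, and for the forward direction the same strategy of first pinning the path element at the top of every rank via the rank-$1$ shadow comparison and then contradicting condition (1) at rank $n-2$ by comparing the full $\Q$-level (whose shadow collapses onto all of $\Q_{[n-1]}$) against the initial segment of the same size; your explicit enumeration of the three monomials $x^{m-3}y^{k-1}, x^{m-2}y^{k-2}, x^{m-1}y^{k-3}$ is a concrete two-dimensional instance of the paper's self-duality argument, and delegating $\dim\Q\geq 3$ to \Cref{lem:diff by more than one dim not mac} matches how the surrounding classification is assembled. One small correction to your closing remark: the union simplicial order with the path on top also makes $\P\di(2\times n)$ Macaulay --- at each middle level the initial segments $\{p_d\}$ and $\{p_d,xy^{d-1}\}$ have shadows of sizes $1$ and $2$, which are minimal, and these shadows are again initial segments --- so the placement of the path is not actually forced in the backward direction (it is only forced, by your own rank-$1$ argument, when both sides of the box exceed $2$).
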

\begin{proof}
Let's first prove the backward implication. If $\Q$ is a path poset, then it's trivial. Suppose $\Q$ has dimension $2\times k$ for some $k\in \N$. Then we can show that $\P\di \Q$ is Macaulay with respect to the union simplicial order where the poset elements are larger. With this order, the property that the shadow of an initial segment is an initial segment is obvious. Since in $\P\di \Q$, at each rank $0<r<n$ there are only $3$ elements with number of shadows $1,1,2$, it's not hard to check manually that remaining condition is satisfied.

Now, let's prove the forward implication. First, let's suppose at least three variables are used to construct $\Q$ , or $\Q$ is of the dimension $s\times t$ where $s,t>2$, and $\P \di \Q$ is Macaulay. Then, all elements of rank $1$ in $\Q$ have at least two upper shadows. Then Macaulayness of $\P\di \Q$ makes the element of $\P$ with rank $1$ to be the largest element of $\P\di \Q$ with the rank $1$. Also, Macaulayness of $\P\di \Q$ forces it to have the union simplicial order.  Since all elements of rank $1$ in $Q$ have at least two upper shadows, self-duality of Clements-Lindstr\"om posets implies that for any element of $q\in \Q$ of degree $n-2$, and for any cover $c$ of $q$, there is a $q'\in \Q$ of degree $n-2$ with $q'\ne q$ such that $c$ also covers $q'$. Let $q^*$ be the smallest element of $\Q$ of degree $n-2$. Let $A$ be the set of elements of $\Q$ of rank $n-2$. Since $q^*$ has only one cover, and this cover covers some other element of $A$, we have $\uSdw_{\P\di \Q}(A) = \uSdw_{\P\di \Q}(A-\{q^\ast\})$. Then it is not hard to observe that $\lvert\uSdw_{\P\di \Q}(A)\rvert < \lvert\uSdw_{\P\di \Q}((A- \{q^*\})\cup \{p^*\})\rvert$, where $p^*$ is the element of $\P$ with rank $n-2$. But, this is a contradiction for the Macaulayness. 
\end{proof}

The following is the main result of the section, which encompases the facts proven above.

\begin{thm}\label{thm: classification diamond product of boxes}
Let $\P$ be a $n$-dimensional Clements-Lindstr\"{o}m poset, 
$\Q$ a $m$-dimensional Clements-Lindstr\"{o}m poset, 
and suppose $\P$ and $\Q$ are of the same rank, then $\P \diamond \Q$ is Macaulay if and only if any of the following conditions is true:
\begin{itemize}
\item $\P \cong \Q$
\item $n = 1$, and $m = 2$ where $\Q$ has a side length of $2$
\item $m = 1$, and $n = 2$ where $\P$ has a side length of $2$
\end{itemize}
\end{thm}

\begin{proof}
    This result follows  from \Cref{thm: diamond product of n dimensional box sets}, \Cref{lem: geq 5 not Mac}, 
    \Cref{lem:diff by more than one dim not mac}, and \Cref{lem: path and 2 X k}.
\end{proof}

We next apply \Cref{thm: classification diamond product of boxes} to produce Macaulay rings.

\begin{proof}[Proof of \Cref{introthm D} (1)]\label{proofD1}
We claim that the poset of monomials of the given ring is $\M_A\diamond \M_B$, where $\M_A$ and $\M_B$ are the monomial posets of the rings
\[
A=\frac{K[x_1,\ldots, x_n]}{(x_1^{a_1}, \ldots, x_n^{a_n})} \text{ and } B=\frac{K[y_1,\ldots, y_m]}{(y_1^{b_1}, \ldots, y_m^{b_m})}.
\]
First, observe that by \Cref{prop: FP} and  \Cref{ex: FP wedge} $\M_A\vee \M_B$ is the poset of monomials of
\begin{equation*}
A\times_K B=  
    \frac{K[x_1,\ldots, x_n,y_1,\ldots, y_n]}{(x_1^{a_1},\ldots, x_n^{a_n}, y_1^{b_1},\ldots, y_m^{b_m})+( x_iy_j:1\leq i\leq n, 1\leq j\leq m)}
\end{equation*}
and the ring in the theorem is obtained by identifying elements $x_1^{a_1-1}\cdots x_n^{a_n-1}$ and $y_1^{b_1-1}\cdots y_m^{b_m-1}$ of the ring displayed above,  which correspond to the largest elements of $\M_A$ and $\M_B$, respectively. Thus the poset of monomials of the ring in the theorem is $\M_A\diamond \M_B$. Now the desired conclusion follows from \Cref{thm: classification diamond product of boxes}.
\end{proof}

\section{The Macaulay property for fiber products of Clements-Lindstr\"om  posets}\label{s: fiber box}

In this section we study the {\em heart-shaped posets} introduced in \Cref{def: heart}. These are the fiber product of two 2-dimensional Clements-Lindstr\"om posets over their intersection and have the shape in \Cref{heart}. Our heart-shaped posets are the posets of monomials of certain quotients of $K[x,y]$. In \cite{Abdelfatah} and \cite{He} the authors classify Macaulay rings which are quotients of $K[x,y]$ with respect to the total order being the lex order. The heart-shaped posets are not typically in this list because another order is needed in many cases. In fact, to our knowledge, the posets described below exemplify the first known family where showing different members are Macaulay necessitates distinct total orders.

In \Cref{thm: heart} of this section we characterize the Macaulay heart-shaped posets. The following example shows that not all such posets are Macaulay.

\begin{ex}\label{ex: heart}
Consider in view of \Cref{ex: FP intersection} for  $C=\frac{K[x,y]}{(x^3,y)}=\frac{K[x,y]}{(x^4,y)+(x^3,y^3)}$ the  ring  
\[
\frac{K[x,y]}{(x^4,y)} \times_C \frac{K[x,y]}{(x^3,y^3)}= \frac{K[x,y]}{(x^4,y)\cap(x^3,y^3)} = \frac{K[x,y]}{(x^4,y^3,x^3y)}.
\]
By \Cref{prop: FP}, the monomial poset of the ring $R$ pictured below is the fiber product of two Clements-Lindstr\"om posets corresponding to the monomial posets of each factor glued along the monomial subposet of $C$. The monomials of $C$ are depicted in purple.  One factor in the fiber product consists of the red and purple nodes and the other of the blue and purple nodes. The poset below is not Macaulay because $y^2$ has the smallest upper shadow among elements of degree 3. However, if the poset were Macaulay, $y^2$ would be largest in the total order in rank 3. This forces $xy^2$ to be the largest element of degree $4$ although it has larger upper shadow than $x^3$, leading to a contradiction.

\begin{center}
\begin{tikzpicture}[scale=1]
    \node (0) at (0,0) {\textcolor{violet}{1}};
    \node (1) at (-0.8,1) {\textcolor{violet}{$x$}};
    \node (2) at (0.8,1) {\textcolor{blue}{$y$}};
    \node (3) at (-1.6,2) {\textcolor{violet}{$x^2$}};
    \node (4) at (0,2) {\textcolor{blue}{$xy$}};
    \node (5) at (1.6,2) {\textcolor{blue}{$y^2$}};
    \node (6) at (-2.4,3) {\textcolor{red}{$x^3$}};
    \node (7) at (-0.8,3) {\textcolor{blue}{$x^2y$}};
    \node (8) at (0.8,3) {\textcolor{blue}{$xy^2$}};
   \node (9) at (0,4) {\textcolor{blue}{$x^2y^2$}};

    \draw [-] (0) -- (1);
    \draw [-] (0) -- (2);
    \draw [-] (1) -- (3);
    \draw [-] (1) -- (4);
    \draw [-] (2) -- (4);
    \draw [-] (2) -- (5);
    \draw [-] (3) -- (6);
    \draw [-] (3) -- (7);
    \draw [-] (4) -- (7);
    \draw [-] (4) -- (8);
    \draw [-] (5) -- (8);
   \draw [-] (9) -- (8);
  \draw [-] (9) -- (7);

\end{tikzpicture}

\end{center}
\end{ex}

In the poset $\M$ depicted in \Cref{heart} define sets of nodes $\M_0 = \{x^iy^j \mid j < a_1\}$  and $\M_1 = \{x^iy^j \mid j \geq a_1\}$. Assuming that $a_1\leq b_1$, $\M_0$ is equivalently the Clements-Lindstr\"om poset made up of the left side of the heart in the graph above, and $\M_1$ the remaining nodes. In \Cref{ex: heart} $\M_0$ consists of the red and purple nodes and $\M_1$ of the blue nodes. It is clear that $\M_0 \cup \M_1 = \M$, and $\M_0 \cap \M_1 = \varnothing$. 

Depending on how $a_0$ and $a_1$, respectively how $b_0$ and $b_1$ compare, the posets $\M_0$ and $\M_1$ are Macaulay with respect to the lexicographic order where either $x<_{\lex} y$ or $x>_{\lex} y$. When $\M_0$ and $\M_1$ are Macaulay with respect to different lexicographic orders, the next definition provides a total order which glues together the two different orders and \Cref{thm: heart} gives compatibility conditions on $\M_1$ and $\M_2$ which ensure that the total order introduced below makes the heart poset Macaulay.

\begin{defn}

Define the total order to be used, the {\em twist order}, denoted $\prec$ for a given lexicographic order $<_{\lex}$, as follows. Say nodes $a$ and $b$ have equal rank; then,
\begin{itemize}
    \item If $a,b\in \M_0$, set $a\prec b$ if and only if $a<_{\lex} b$.
    \item If $a,b \in \M_1$, set $a \prec b$ if and only if $b<_{\lex} a$.
    \item If $a\in\M_0,b\in\M_1$, set $a \prec b$.
\end{itemize}    
\end{defn}
\begin{ex}
Below we can see a heart poset with monomials in each rank arranged {\em increasing left to right}, first in lex order with $y >_{\lex} x$, and then using the twist order with $y >_{\lex} x$. The Hasse diagram showing the twist order can be obtained by reflecting that of the lex order over the vertical line drawn at $y^{a_1}$. \Cref{thm: heart} shows that this poset is Macaulay with respect to the twist order. The twist order does not respect multiplication: we have $xy \prec y^2$ but $xy\cdot y=xy^2\succ y^3=y^2\cdot y$. 

\begin{center}
\begin{tikzpicture}[scale=0.8]
    \node at (0,-1) {Using lex order};
    \node (0) at (0,0) {1};
    \node (1) at (-0.8,1) {$x$};
    \node (2) at (0.8,1) {$y$};
    \node (3) at (-1.6,2) {$x^2$};
    \node (4) at (0,2) {$xy$};
    \node (5) at (1.6,2) {$y^2$};
    \node (6) at (-2.4,3) {$x^3$};
    \node (7) at (-0.8,3) {$x^2y$};
    \node (8) at (0.8,3) {$xy^2$};
    \node (9) at (2.4,3) {$y^3$};
    \node (10) at (-3.2,4) {$x^4$};
    \node (11) at (-1.6,4) {$x^3y$};
    \node (12) at (1.6,4) {$xy^3$};
    \node (13) at (3.2,4) {$y^4$};
    \node (14) at (-2.4,5) {$x^4y$};
    \node (15) at (2.4,5) {$xy^4$};

    \draw [-] (0) -- (1);
    \draw [-] (0) -- (2);
    \draw [-] (1) -- (3);
    \draw [-] (1) -- (4);
    \draw [-] (2) -- (4);
    \draw [-] (2) -- (5);
    \draw [-] (3) -- (6);
    \draw [-] (3) -- (7);
    \draw [-] (4) -- (7);
    \draw [-] (4) -- (8);
    \draw [-] (5) -- (8);
    \draw [-] (5) -- (9);
    \draw [-] (6) -- (10);
    \draw [-] (6) -- (11);
    \draw [-] (7) -- (11);
    \draw [-] (8) -- (12);
    \draw [-] (9) -- (12);
    \draw [-] (9) -- (13);
    \draw [-] (10) -- (14);
    \draw [-] (11) -- (14);
    \draw [-] (12) -- (15);
    \draw [-] (13) -- (15);

\end{tikzpicture}
\qquad
\begin{tikzpicture}[scale=0.8]
    \node at (0,-1) {Using twist order};
    \node (0) at (0,0) {1};
    \node (1) at (-0.8,1) {$x$};
    \node (2) at (0.8,1) {$y$};
    \node (3) at (-1.6,2) {$x^2$};
    \node (4) at (0,2) {$xy$};
    \node (5) at (1.6,2) {$y^2$};
    \node (6) at (-2.4,3) {$x^3$};
    \node (7) at (-0.8,3) {$x^2y$};
    \node (8) at (2.4,3) {$xy^2$};
    \node (9) at (0.8,3) {$y^3$};
    \node (10) at (-3.2,4) {$x^4$};
    \node (11) at (-1.6,4) {$x^3y$};
    \node (12) at (1.6,4) {$xy^3$};
    \node (13) at (0,4) {$y^4$};
    \node (14) at (-2.4,5) {$x^4y$};
    \node (15) at (0.8,5) {$xy^4$};

    \draw [-] (0) -- (1);
    \draw [-] (0) -- (2);
    \draw [-] (1) -- (3);
    \draw [-] (1) -- (4);
    \draw [-] (2) -- (4);
    \draw [-] (2) -- (5);
    \draw [-] (3) -- (6);
    \draw [-] (3) -- (7);
    \draw [-] (4) -- (7);
    \draw [-] (4) -- (8);
    \draw [-] (5) -- (8);
    \draw [-] (5) -- (9);
    \draw [-] (6) -- (10);
    \draw [-] (6) -- (11);
    \draw [-] (7) -- (11);
    \draw [-] (8) -- (12);
    \draw [-] (9) -- (12);
    \draw [-] (9) -- (13);
    \draw [-] (10) -- (14);
    \draw [-] (11) -- (14);
    \draw [-] (12) -- (15);
    \draw [-] (13) -- (15);
\end{tikzpicture}
\end{center}
\end{ex}

To prove \Cref{thm: heart} we need a series of lemmas. We include the first without proof, as it is easily deduced. 

\begin{lem}\label{lemma: box segments}
 Consider the monomial poset $\M$ of the ring $R = \frac{K[x,y]}{(x^a,y^b)}$ equipped with a total order $>$. Suppose $A$ is a segment of $\M_{[d]}$ (either initial, final, or neither). Assume that either $b\geq a$ and the total order is the lexicographic order with $x>y$  or $a \geq b$ and the total order is the lexicographic order with $y>x$.  If $A$ is an initial segment of $\M_{[d]}$, then
    \begin{equation*}
    |\nabla_{\mathcal{M}}A| =
    \begin{cases}
        |A|+1, & d\in[0,b-2]\\
        |A|, & d\in[b-1,a+b-2]
    \end{cases}
    \end{equation*}
    If $A$ is a final segment of $\M_{[d]}$, then
    \begin{equation*}
        |\nabla_{\mathcal{M}}A| =
        \begin{cases}
            |A|+1, & d\in[0,a-2]\\
            |A|, & d\in[a-1,a+b-2]
        \end{cases}
    \end{equation*}
    Lastly, if $A$ is a segment that is neither initial nor a final, we have that 
    \[
     |\nabla_{\mathcal{M}}A| = |A| + 1, \, \forall d \in [0,a+b-2].
     \] To get the respective values for the case where either $b\geq a$ and $y>x$  or $a \geq b$ and $x>y$, swap the values for initial and final segments. 
\end{lem}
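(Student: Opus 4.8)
The plan is to deduce all three formulas from the additivity identity in Lemma~\ref{prop: size upper shadow segment in box} together with the pointwise shadow count in Lemma~\ref{prop: size upper shadow of monomial in box}, after first normalizing by symmetry. Within a single level $\M_{[d]}$ the lexicographic order with $x>y$ and the one with $y>x$ restrict to mutually reverse total orders, so replacing one by the other interchanges initial with final segments; this produces the last sentence of the lemma from the rest. Likewise $x\leftrightarrow y$, $a\leftrightarrow b$ exchanges the hypothesis $b\ge a$ with the hypothesis $a\ge b$, carrying the prescribed order of one case to that of the other, so it suffices to argue with $b\ge a$ and its prescribed order. There the elements of $\M_{[d]}$ are the monomials $x^{i}y^{d-i}$ with $i$ in the integer interval $\big[\max(0,d-b+1),\ \min(d,a-1)\big]$, and a segment $A$ of $\M_{[d]}$ is exactly the set of those monomials whose $x$-exponent lies in a subinterval; an initial segment occupies the top end of this interval, a final segment the bottom end, and any other segment the interior.

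The key step is to locate the ``defective'' monomials. By Lemma~\ref{prop: size upper shadow of monomial in box} one has $|\nabla_{\M}(x^{i}y^{d-i})|=2$ unless the monomial lies on a facet of the box, that is $i=a-1$ or $d-i=b-1$, where the shadow has size $1$ (and size $0$ only at the unique top element $x^{a-1}y^{b-1}$, which has degree $a+b-2$). I would then observe that in $\M_{[d]}$ the only monomials that can lie on a facet are the two lex-extremal ones: the lex-top monomial of $\M_{[d]}$ first meets one facet once $d$ reaches a threshold $\delta$, the lex-bottom monomial first meets the other facet once $d$ reaches a threshold $\delta'$, with $\{\delta,\delta'\}=\{a-1,\,b-1\}$ and the assignment to ``initial'' versus ``final'' determined by the chosen lex order; every other monomial of $\M_{[d]}$ lies strictly between the two facets and so has upper shadow of size exactly $2$. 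Combined with $|\nabla_{\M}A|=\big(\sum_{t\in A}|\nabla_{\M}(t)|\big)-|A|+1$, this reduces the lemma to counting the facet elements of $A$.

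The three cases then follow by substitution. If $A$ is a segment that is neither initial nor final, it contains no facet element, so every summand equals $2$ and $|\nabla_{\M}A|=2|A|-|A|+1=|A|+1$ for all $d$. If $A$ is a nonempty proper initial segment, it contains the lex-top monomial of $\M_{[d]}$ and no other facet element, so for $d$ below the relevant threshold no summand is defective and $|\nabla_{\M}A|=|A|+1$, whereas for $d$ at or above it exactly one summand equals $1$ and $|\nabla_{\M}A|=\big(1+2(|A|-1)\big)-|A|+1=|A|$; the final-segment case is identical with the two facets, hence the two thresholds, interchanged. I would finish by dispatching the genuinely degenerate inputs separately: $A=\varnothing$, the full-level segment $A=\M_{[d]}$ for large $d$ (where both lex-extremal monomials may sit on facets at once, producing $|A|-1$), and $d=a+b-2$ (where the value $0$ appears).

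I expect the only real friction to be organizational rather than computational: arranging the $x\leftrightarrow y$ normalization so that the ``swap initial and final'' clause is a genuine corollary rather than a rerun of the argument, and isolating the boundary inputs (empty segment, full level, maximal degree) cleanly so the dichotomy $|\nabla_{\M}A|\in\{|A|,|A|+1\}$ is stated for the right range. Once the two facet thresholds have been pinned down, each of the three formulas is a one-line substitution into Lemma~\ref{prop: size upper shadow segment in box}.
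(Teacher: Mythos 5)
The paper offers no proof of this lemma (it is introduced as ``easily deduced''), so yours is the only argument on record. Your route --- normalize by the $x\leftrightarrow y$ symmetry, observe that the only monomials of $\M_{[d]}$ with upper shadow of size less than $2$ are the two lex-extremal ones (the facet elements $x^{a-1}y^{d-a+1}$ and $x^{d-b+1}y^{b-1}$, when they exist), and feed the count of defective summands into the inclusion--exclusion identity of \Cref{prop: size upper shadow segment in box} --- is surely the intended one and is sound, including your isolation of the degenerate inputs ($A=\varnothing$, $A=\M_{[d]}$, and $d=a+b-2$, where the stated formulas genuinely fail: a nonempty initial segment at top rank has shadow of size $|A|-1=0$, not $|A|$).

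The one step you leave non-committal is the one that matters: ``the assignment to initial versus final determined by the chosen lex order.'' When you pin it down you will find that the thresholds depend \emph{only} on the term order and not on whether $a\geq b$ or $b\geq a$: the lex-top element of $\M_{[d]}$ maximizes the exponent of the larger variable, so for $y>x$ the initial-segment threshold is $b-1$ and the final-segment threshold is $a-1$ (matching the display), while for $x>y$ they are $a-1$ and $b-1$ respectively (the display with initial and final swapped). This contradicts the lemma as stated in the case ``$b\geq a$ and $x>y$,'' which the statement groups with ``$a\geq b$ and $y>x$'': for instance in $K[x,y]/(x^2,y^4)$ with lex $x>y$ the initial segment $\{x\}$ at rank $1\in[0,b-2]$ has upper shadow $\{xy\}$ of size $1=|A|$, not $|A|+1$. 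So your argument, executed honestly, proves a corrected statement rather than the displayed one; you should either record the correction or restrict to the configuration actually used downstream ($a\geq b$ with $y>x$, together with its initial/final swap), where the display is right and the applications in \Cref{lemma:initial segments general} and \Cref{thm: heart} are unaffected.
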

Now we prove another claim which will help to show that the first requirement for $\M$ to be Macaulay holds for several of our cases. Notice that the second poset which corresponds to the ring $\frac{K[x,y]}{(x^{b_1},y^{b_0})}$ has been twisted, the order being reversed with respect to the standard $b_0\times b_1$ Clements-Lindstr\"om poset by intrechanging the names of the variables while keeping the lexicographic order $y>x$.

\begin{lem}
\label{lemma:initial segments general}
    Say poset $\mathcal{M}$ is the disjoint union of the monomial poset $\P$ of  the ring $L=\frac{K[x,y]}{(x^{a_0},y^{a_1})}$ and  $\Q$ of $R=\frac{K[x,y]}{(x^{b_1},y^{b_0})}$, where $a_0 > b_0$, $b_1 > a_1$, $a_0 \geq b_1$, and $a_0+a_1 \geq b_0+b_1$. 
    Also suppose $d \in [a_1+b_0-1,a_0+a_1-2]$ and let $A \subset \mathcal{P}_{[d]}$ such that $A = A_0 \cup A_1$, where $A_0 \subset \P$ and $A_1 \subset \Q$. We will use the lexicographic order with $y >_{\lex} x$.  Then, 
    \[|\nabla_{\mathcal{M}}\Seg_d |A|| \leq |\nabla_{\mathcal{M}}\Seg_d^{\P}|A_0|| + |\nabla_{\mathcal{M}}\Seg_d^{\Q}|A_1||\]
\end{lem}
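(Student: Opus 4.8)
The plan is to make the initial segment $\Seg_d\lvert A\rvert$ of $\mathcal M=\P\sqcup\Q$ completely explicit under the union simplicial order — here the lexicographic order $y>x$ within each box, and (since $\P$ is listed first) the elements of $\Q$ placed above those of $\P$ — to read off the sizes of all the upper shadows in sight from \Cref{lemma: box segments}, and thereby to reduce the asserted inequality to an elementary numerical comparison in each of two cases. Since $\P$ and $\Q$ sit disjointly inside $\mathcal M$, the upper shadow decomposes as $\nabla_{\mathcal M}(X\sqcup Y)=\nabla_{\P}X\sqcup\nabla_{\Q}Y$ for $X\subseteq\P$ and $Y\subseteq\Q$; in particular $\nabla_{\mathcal M}\Q_{[d]}=\Q_{[d+1]}$, and the right-hand side of the lemma equals $\lvert\nabla_{\P}\Seg^{\P}_d\lvert A_0\rvert\rvert+\lvert\nabla_{\Q}\Seg^{\Q}_d\lvert A_1\rvert\rvert$. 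On the left-hand side, with $\Q$ on top one has $\Seg_d\lvert A\rvert=\Seg^{\Q}_d\lvert A\rvert\subseteq\Q$ when $\lvert A\rvert\le\lvert\Q_{[d]}\rvert$, and $\Seg_d\lvert A\rvert=\Q_{[d]}\sqcup\Seg^{\P}_d(\lvert A\rvert-\lvert\Q_{[d]}\rvert)$ otherwise.

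Next I would normalize using the hypotheses: from $a_0\ge b_1>a_1$ we get $a_0>a_1$, so \Cref{lemma: box segments} applies to $\P=K[x,y]/(x^{a_0},y^{a_1})$ with $y>x$ in its unswapped form, while the form of the formula for $\Q=K[x,y]/(x^{b_1},y^{b_0})$ is dictated by the sign of $b_1-b_0$. The degree restriction $d\in[a_1+b_0-1,a_0+a_1-2]$ is what makes everything work: the bound $d\ge a_1+b_0-1$ forces each $\Q$-initial segment appearing in the argument to already have full upper shadow (so its shadow has size exactly its own, with no $+1$), and the bound $d\le a_0+a_1-2$ keeps $\P_{[d]}$ nonempty and controls whether the $\P$-segments in question reach the walls $x^{a_0}$ or $y^{a_1}$, hence whether their shadow picks up a $+1$. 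I would also record the elementary fact, immediate from \Cref{lemma: box segments}, that $q\mapsto\lvert\nabla_{\P}\Seg^{\P}_d q\rvert$ is nondecreasing.

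Finally I carry out the case analysis. When $\lvert A\rvert\le\lvert\Q_{[d]}\rvert$ the claim reduces to $\lvert\nabla_{\Q}\Seg^{\Q}_d(\lvert A_0\rvert+\lvert A_1\rvert)\rvert\le\lvert\nabla_{\P}\Seg^{\P}_d\lvert A_0\rvert\rvert+\lvert\nabla_{\Q}\Seg^{\Q}_d\lvert A_1\rvert\rvert$; plugging in the $\lvert\cdot\rvert$-or-$\lvert\cdot\rvert+1$ formulas, the left side exceeds $\lvert A_0\rvert+\lvert A_1\rvert$ by at most one while the right side is at least $\lvert A_0\rvert+\lvert A_1\rvert$, so one only has to check that the possible stray $+1$ on the left is matched on the right, which follows from the degree bounds. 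When $\lvert A\rvert>\lvert\Q_{[d]}\rvert$ the claim becomes $\lvert\Q_{[d+1]}\rvert+\lvert\nabla_{\P}\Seg^{\P}_d(\lvert A\rvert-\lvert\Q_{[d]}\rvert)\rvert\le\lvert\nabla_{\P}\Seg^{\P}_d\lvert A_0\rvert\rvert+\lvert\nabla_{\Q}\Seg^{\Q}_d\lvert A_1\rvert\rvert$; here I would use $\lvert A\rvert-\lvert\Q_{[d]}\rvert\le\lvert A_0\rvert$ together with monotonicity to dominate the second term on the left by the first term on the right, and then use that (because $d\ge a_1+b_0-1$) the shadow $\nabla_{\Q}\Seg^{\Q}_d\lvert A_1\rvert$ occupies enough of $\Q_{[d+1]}$ to absorb the remaining $\lvert\Q_{[d+1]}\rvert$.

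The main obstacle is this second, ``spill-over'', case: the left-hand shadow there is the full next level $\Q_{[d+1]}$ glued to a chunk coming from near the top of $\P$, and the inequality is essentially tight, so the argument genuinely relies on the precise hypotheses $a_0\ge b_1$ and $a_0+a_1\ge b_0+b_1$ and on the exact endpoints of the degree interval rather than on any soft monotonicity. Getting the boundary subcases right — segment sizes equal to an entire level, or $d$ sitting at an endpoint of $[a_1+b_0-1,a_0+a_1-2]$, where a segment can flip between being ``initial'' and ``neither'' in the sense of \Cref{lemma: box segments} — is the delicate bookkeeping; once it is done the remaining comparisons are routine.
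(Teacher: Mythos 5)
Your reduction of the left-hand side is correct: for $d\ge a_1+b_0-1$ the lex order with $y>x$ does place all of $\Q_{[d]}$ above all of $\P_{[d]}$, so $\Seg_d|A|$ is either an initial segment of $\Q_{[d]}$ or equals $\Q_{[d]}\sqcup\Seg^{\P}_d(|A|-|\Q_{[d]}|)$. The gap is in your treatment of the spill-over case, which is the only case the paper needs and the only place where the hypotheses $a_0\ge b_1$ and $a_0+a_1\ge b_0+b_1$ enter. You bound the two terms of the left-hand side separately, and in particular you assert $|\Q_{[d+1]}|\le|\nabla_{\Q}\Seg^{\Q}_d|A_1||$. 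Since $\nabla_{\Q}\Seg^{\Q}_d|A_1|\subseteq\Q_{[d+1]}$, this would force the shadow of $\Seg^{\Q}_d|A_1|$ to be the entire level $\Q_{[d+1]}$, which fails whenever $|A_1|$ is small. For instance with $a_0=b_1=5$, $a_1=b_0=2$, $d=3$, $|A_0|=2$, $|A_1|=1$ one has $|\Q_{[4]}|=2$ while $|\nabla_{\Q}\Seg^{\Q}_3 1|=|\{x^3y\}|=1$; the lemma still holds (both sides equal $3$) only because your other termwise bound, $|\nabla_{\P}\Seg^{\P}_d(|A|-|\Q_{[d]}|)|\le|\nabla_{\P}\Seg^{\P}_d|A_0||$, happens to have exactly one unit of slack. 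So termwise domination is the wrong decomposition, and the inequality cannot be closed without measuring that slack.

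Measuring it is the actual content of the lemma. Setting $m=|\Q_{[d]}|-|A_1|$, the difference between the right- and left-hand sides equals
\[
\Bigl(|\nabla_{\P}\Seg^{\P}_d|A_0||-|\nabla_{\P}\Seg^{\P}_d(|A_0|-m)|\Bigr)-\Bigl(|\Q_{[d+1]}|-|\nabla_{\Q}\Seg^{\Q}_d|A_1||\Bigr),
\]
that is, the new shadow of the size-$m$ segment of $\P_{[d]}$ lying just below $\Seg^{\P}_d(|A_0|-m)$ minus the new shadow of the final segment of $\Q_{[d]}$ of size $m$. Showing the first dominates the second is exactly the paper's key inequality $|\nabla_{\M} A^{LL}|\ge|\nabla_{\M} A^{RL}|$ (in its notation $A^{LL}$ and $A^{RL}$ are precisely these two segments), and it is carried out through the case split of \Cref{lemma: box segments} on whether $d\ge b_1-1$, which is where $a_0\ge b_1$ and the endpoints of the interval for $d$ are actually used. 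Your write-up correctly flags that the inequality is tight and hypothesis-sensitive, but it never supplies this comparison, so the central step is missing. (A smaller issue: in the non-spill-over case your claim that the right-hand side is at least $|A_0|+|A_1|$ can fail when $\Seg^{\P}_d|A_0|$ is an entire level in the upper half of $\P$, where the shadow has size $|A_0|-1$; that boundary case also needs an explicit check.)
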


\Cref{fig: LR gen} shows the top portion of the heart-shaped poset in \Cref{heart} with monomials in a rank arranged in increasing order left to right, both for the lex and twist order.
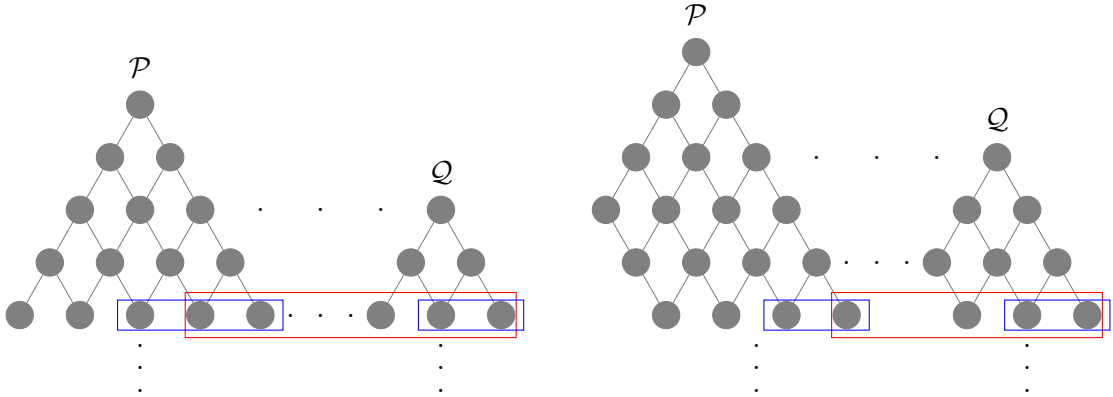
\begin{figure}[h]
\centering
    \begin{tikzpicture}[
    darknode/.style={circle, draw=gray, fill=gray, minimum size=0.5mm},
    ]

    \node[darknode] (0) at (-1.6,0) {};
    \node[darknode] (1) at (-0.8,0) {};
    \node[darknode] (2) at (0,0) {};
    \node[darknode] (3) at (0.8,0) {};
    \node[darknode] (4) at (1.6,0) {};
    \node[darknode] (5) at (-1.2,0.7) {};
    \node[darknode] (6) at (-0.4,0.7) {};
    \node[darknode] (7) at (0.4,0.7) {};
    \node[darknode] (8) at (1.2,0.7) {};
    \node[darknode] (9) at (-0.8,1.4) {};
    \node[darknode] (10) at (0,1.4) {};
    \node[darknode] (11) at (0.8,1.4) {};
    \node[darknode] (12) at (-0.4,2.1) {};
    \node[darknode] (13) at (0.4,2.1) {};
    \node[darknode] (14) at (0,2.8) {};
    \node at (0,-0.4) {.};
    \node at (0,-0.7) {.};
    \node at (0,-1) {.};

    \draw[gray,-] (0) -- (14);
    \draw[gray,-] (4) -- (14);
    \draw[gray,-] (1) -- (13);
    \draw[gray,-] (1) -- (5);
    \draw[gray,-] (2) -- (11);
    \draw[gray,-] (3) -- (8);
    \draw[gray,-] (3) -- (12);
    \draw[gray,-] (2) -- (9);

    \node[darknode] (15) at (3.2,0) {};
    \node[darknode] (16) at (4,0) {};
    \node[darknode] (17) at (4.8,0) {};
    \node[darknode] (18) at (3.6,0.7) {};
    \node[darknode] (19) at (4.4,0.7) {};
    \node[darknode] (20) at (4,1.4) {};

    \draw[gray,-] (15) -- (20);
    \draw[gray,-] (17) -- (20);
    \draw[gray,-] (16) -- (19);
    \draw[gray,-] (16) -- (18);
    \node at (4,-0.4) {.};
    \node at (4,-0.7) {.};
    \node at (4,-1) {.};

    \node at (2,0) {.};
    \node at (2.4,0) {.};
    \node at (2.8,0) {.};
    \node at (1.6,1.4) {.};
    \node at (2.4,1.4) {.};
    \node at (3.2,1.4) {.};

    \node at (0,3.3) {$\P$};
    \node at (4,1.9) {$\Q$};

    \draw[red] (0.6,-0.3) rectangle (5,0.3);
    \draw[blue] (3.7,-0.2) rectangle (5.1,0.2);
    \draw[blue] (-0.3,-0.2) rectangle (1.9,0.2);
    \end{tikzpicture}
\qquad
    \begin{tikzpicture}[
    darknode/.style={circle, draw=gray, fill=gray, minimum size=0.5mm},
    ]

    \node[darknode] (1) at (-0.8,0) {};
    \node[darknode] (2) at (0,0) {};
    \node[darknode] (3) at (0.8,0) {};
    \node[darknode] (4) at (1.6,0) {};
    \node[darknode] (5) at (-1.2,0.7) {};
    \node[darknode] (6) at (-0.4,0.7) {};
    \node[darknode] (7) at (0.4,0.7) {};
    \node[darknode] (8) at (1.2,0.7) {};
    \node[darknode] (9) at (-0.8,1.4) {};
    \node[darknode] (10) at (0,1.4) {};
    \node[darknode] (11) at (0.8,1.4) {};
    \node[darknode] (12) at (-0.4,2.1) {};
    \node[darknode] (13) at (0.4,2.1) {};
    \node[darknode] (14) at (0,2.8) {};
    \node[darknode] (21) at (-0.4,-0.7) {};
    \node[darknode] (22) at (0.4,-0.7) {};
    \node[darknode] (23) at (1.2,-0.7) {};
    \node[darknode] (24) at (2,-0.7) {};
    \node[darknode] (25) at (3.6,-0.7) {};
    \node[darknode] (26) at (4.4,-0.7) {};
    \node[darknode] (27) at (5.2,-0.7) {};
    \node at (0.8,-1.1) {.};
    \node at (0.8,-1.4) {.};
    \node at (0.8,-1.7) {.};

    \draw[gray,-] (5) -- (14);
    \draw[gray,-] (1) -- (13);
    \draw[gray,-] (21) -- (5);
    \draw[gray,-] (21) -- (11);
    \draw[gray,-] (22) -- (9);
    \draw[gray,-] (22) -- (8);
    \draw[gray,-] (23) -- (12);
    \draw[gray,-] (23) -- (4);
    \draw[gray,-] (24) -- (14);

    \node[darknode] (15) at (3.2,0) {};
    \node[darknode] (16) at (4,0) {};
    \node[darknode] (17) at (4.8,0) {};
    \node[darknode] (18) at (3.6,0.7) {};
    \node[darknode] (19) at (4.4,0.7) {};
    \node[darknode] (20) at (4,1.4) {};

    \draw[gray,-] (15) -- (20);
    \draw[gray,-] (27) -- (20);
    \draw[gray,-] (25) -- (19);
    \draw[gray,-] (25) -- (15);
    \draw[gray,-] (26) -- (18);
    \draw[gray,-] (26) -- (17);
    \node at (4.4,-1.1) {.};
    \node at (4.4,-1.4) {.};
    \node at (4.4,-1.7) {.};

    \node at (2,0) {.};
    \node at (2.4,0) {.};
    \node at (2.8,0) {.};
    \node at (1.6,1.4) {.};
    \node at (2.4,1.4) {.};
    \node at (3.2,1.4) {.};

    \node at (0,3.3) {$\P$};
    \node at (4,1.9) {$\Q$};

    \draw[red] (1.8,-1) rectangle (5.4,-0.4);
    \draw[blue] (4.1,-0.9) rectangle (5.5,-0.5);
    \draw[blue] (0.9,-0.9) rectangle (2.3,-0.5);
    \end{tikzpicture}
    \caption{An illustration for the proof of \Cref{lemma:initial segments general}}
    \label{fig: LR gen}
\end{figure}

\begin{proof}
    This claim is, in simpler words, saying that the initial segments in $\P$ and $\Q$ (shown in blue) whose sizes sum to the size of some initial segment $S$ in the larger poset $\M$ (shown in red) have an upper shadow that is larger than or equal to that of $S$. Say 
    \begin{eqnarray*}
        \Seg_d^{\Q}|A_1| &=& A^{RR},\\
        (\Seg_d|A|\setminus A^{RR})\cap \Q &=& A^{RL},\\ \Seg_d|A|\cap \Seg_d^{\P}|A_0| &=& A^{LR}, \\
        \Seg_d^{\P}|A_0|\setminus A^{LR} &=& A^{LL}.
    \end{eqnarray*}
Note that all of these are pairwise disjoint. We need not consider the cases where $A^{RR}$ or $A^{LR}$ are empty, as those will not be needed for the proof of the theorem. Assume that $A^{RR}$ and $A^{LR}$ are nonempty. We have
\begin{eqnarray*}
\nabla_{\mathcal{M}}\Seg_d|A| &=& \nabla_{\mathcal{M}}A^{RR} \cup \nabla_{\mathcal{M}}A^{RL} \cup \nabla_{\mathcal{M}}A^{LR}\\
\nabla_{\mathcal{M}}\Seg_d^\P|A_0| &=&\nabla_{\mathcal{M}}A^{LR} \cup \nabla_{\mathcal{M}}A^{LL} \\
\nabla_{\mathcal{M}}\Seg_d^\Q|A_1|&=&\nabla_{\mathcal{M}}A^{RR}
\end{eqnarray*}
Also note that $|\nabla_{\mathcal{M}}A^{RR} \cap \nabla_{\mathcal{M}}A^{RL}|=1$, $|\nabla_{\mathcal{M}}A^{LR} \cap \nabla_{\mathcal{M}}A^{LL}|=1$, and 
\begin{equation*}
\{\nabla_{\mathcal{M}}A^{RR} \text{ or } \nabla_{\mathcal{M}}A^{RL}\} \cap \{\nabla_{\mathcal{M}}A^{LR} \text{ or } \nabla_{\mathcal{M}}A^{LL}\} = \varnothing.
\end{equation*}
It is enough to show that $|\nabla_{\mathcal{M}}A^{LL}|-1 \geq |\nabla_{\mathcal{M}}A^{RL}|-1$, or equivalently that $|\nabla_{\mathcal{M}}A^{LL}| \geq |\nabla_{\mathcal{M}}A^{RL}|$.

Note that by definition, $A^{RL}$ is a  final segment and $A^{LL}$ is a (possibly final) segment. If we have that $a_1+b_0 \geq b_1$, then the range given for $d$ in the statement of the lemma then implies that $d \geq b_1-1$ and thus we are dealing with the left side of the poset in \Cref{fig: LR gen}. Thus, by \Cref{lemma: box segments}, we conclude that $|\nabla_{\mathcal{M}}A^{RL}| = |A^{RL}|$. Since $\P$ is in fact the type of Clements-Lindstr\"om poset described in \Cref{lemma: box segments} ($a_0 \geq b_1 > a_1$ and $\P$ has dimensions $a_0 \times a_1$ with order $y>x$), we have that in any case, $|\nabla_{\mathcal{M}}A^{LL}| \geq |A^{LL}| = |A^{RL}| = |\nabla_{\mathcal{M}}A^{RL}|$. 
    
Say $a_1+b_0 < b_1$. Now, if $d \geq b_1-1$, we are in the same case as above, so say $d < b_1-1$. Then, by \Cref{lemma: box segments}, $|\nabla_{\mathcal{M}}A^{RL}| = |A^{RL}| + 1$. Since $a_0 \geq b_1$, it follows that $d \leq a_0-2$, so  we have $|\nabla_{\mathcal{M}}A^{LL}| = |A^{LL}| + 1 = |A^{RL}| + 1 = |\nabla_{\mathcal{M}}A^{RL}|$. 
    
In all cases we have shown $|\nabla_{\mathcal{M}}A^{LL}| \geq |\nabla_{\mathcal{M}}A^{RL}|$, as desired. 
(In fact, the only case in which we could possibly have that $|\nabla_{\mathcal{M}}A^{LL}| < |\nabla_{\mathcal{M}}A^{RL}|$, by \Cref{lemma: box segments}, is if for $A^{LL}$, $d > a_0-2$, $A^{LL}$ was a final segment, and for $A^{RL}$, $d < b_1-2$; but by assumption $(a_0-2,b_1-2)$ is empty, so this cannot happen).
\end{proof}

The following theorem generalizes \Cref{ex: heart}, characterizing Macaulay heart-shaped posets.

\begin{thm}\label{thm: heart}
Let  $\P$ be a $a_0\times a_1$  Clements-Lindstr\"om poset, $\Q$ a $b_0\times b_1$ Clements-Lindstr\"om poset $\Q$, and $\L$   a $c_0\times c_1$ Clements-Lindstr\"om poset, where $c_i = min\{a_i,b_i\}$. This allows us to identify $\L$ with a sub-poset of $\P$ and $\Q$ respectively via  rank-preserving inclusion.  The fiber product poset $\M=\P\times_\L \Q$  is Macaulay if and only if any of the following hold:
\begin{enumerate}
    \item $a_0 \leq b_0$ and $a_1\leq b_1$, in which case $\M \cong \Q$.
    \item $b_0 \leq a_0$ and $b_1\leq a_1$, in which case $\M \cong \P$.
    \item if $b_0<a_0$ and $a_1<b_1$, then 
    
    \begin{inparaenum}
        \item $a_0=b_1$ or
        \item $b_1<a_0$ and $b_1+b_0 \leq a_0 + a_1$ or
        \item $a_0<b_1$ and $a_0+a_1 \leq b_0 + b_1$.
\end{inparaenum}
    \item if $b_0>a_0$ and $a_1>b_1$,  the previous case holds with $a_i$ and $b_i$ interchanged.
\end{enumerate}
\end{thm}

\begin{proof} In case (1), $\L=\P$. It follows from the definition of fiber product that $\M=\P\times_\P \Q=\Q$. Since the poset $\Q$ is Macaulay, the desired conclusion follows.  Case (2) is analogous. For the rest of the proof we assume without loss of generality that $a_0 > b_0$ and $b_1> a_1$ as in (3). 
We claim that this poset $\M$ is Macaulay for any of the following cases:
\begin{enumerate}
        \item[(a)] $a_0=b_1$
        \item[(b)] $b_1<a_0$ and $b_1+b_0 \leq a_0 + a_1$
        \item[(c)] $a_0<b_1$ and $a_0+a_1 \leq b_0 + b_1$
\end{enumerate}
Now notice that if the poset falls into case (c) above, there exists an isomorphism to  a monomial poset which falls into case (b), swapping $x$ and $y$. Similarly, if we have that $a_0=b_1$ and $a_1<b_0$, there exists an isomorphism  to  a monomial poset such that $a_0=b_1$ and $a_1>b_0$. So, we will assume that $a_0 \geq b_1$ and $a_0+a_1 \geq b_0 + b_1$. 

\begin{claim}\label{claim}
   {\em  If $a_1+b_0 > b_1$,  the poset is Macaulay with respect to the lex order with $y >_{\lex} x$. If instead $a_1+b_0 \leq b_1$, the poset is Macaulay with respect to the twist order, also with $y >_{\lex} x$.}
\end{claim}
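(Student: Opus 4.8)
The plan is to verify directly the two defining conditions of a Macaulay poset for the total order named in the claim, exploiting the decomposition $\M=\M_0\sqcup\M_1$ introduced above. After the reductions already made we may assume $a_0\ge b_1$ and $a_0+a_1\ge b_0+b_1$ (together with $a_0>b_0$, $b_1>a_1$, so in particular $a_0>a_1$). Then $\M_0$ is exactly the $a_0\times a_1$ box poset $\P$, while $\M_1=\{x^iy^j:i<b_0,\ a_1\le j<b_1\}$ is a shift of the $b_0\times(b_1-a_1)$ box poset, and the only covering relations of $\M$ joining $\M_0$ to $\M_1$ are the $b_0$ ``seam edges'' $x^iy^{a_1-1}\lessdot x^iy^{a_1}$ for $0\le i<b_0$, which start in the ranks $a_1-1$ through $a_1+b_0-2$. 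In either total order under consideration every element of $\M_1$ precedes every element of $\M_0$ of the same rank; restricted to $\M_0$ both orders are the lexicographic order with $y>x$, for which $\P$ is Macaulay by Clements--Lindstr\"om \cite{CL} since $a_0>a_1$; restricted to $\M_1$ the order is lexicographic with $y>x$ when $a_1+b_0>b_1$ and lexicographic with $x>y$ when $a_1+b_0\le b_1$, and in each regime the comparison between $b_0$ and $b_1-a_1$ is precisely the one that makes the box $\M_1$ Macaulay with that order, again by \cite{CL}. This is the structural reason the two regimes require different orders.

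With these observations I would split the verification according to the rank $d$ of the level being tested. For $d\le a_1-2$ one has $\M_{[d]}\subseteq\M_0$ and $\M_{[d+1]}\subseteq\M_0$, so both conditions reduce to the corresponding statements for the box $\P$. For $d\ge a_1+b_0-1$ no seam edge starts at level $d$, so for $A=A_0\sqcup A_1\subseteq\M_{[d]}$ the shadow splits as $\uSdw_\M A=\uSdw_{\M_0}A_0\sqcup\uSdw_{\M_1}A_1$; here condition (1) follows from Macaulayness of $\M_0$ and $\M_1$ together with \Cref{lemma:initial segments general}, which says precisely that among all ways of distributing $|A|$ between the two sides the initial segment --- the one that fills the $\M_1$-side first --- minimises the sum of the two box shadows, and condition (2) follows because the property ``fill the $\M_1$-side first'' is preserved under taking shadows once each box satisfies it. The remaining, and genuinely delicate, range is the seam range $a_1-1\le d\le a_1+b_0-2$.

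In the seam range I would first record the exact effect of the seam: for $A=A_0\sqcup A_1\subseteq\M_{[d]}$ one has $\uSdw_\M A=\uSdw_{\M_0}A_0\ \sqcup\ \bigl(\uSdw_{\M_1}A_1\cup S(A_0)\bigr)$, where $S(A_0)$ is the (single) $\M_1$-cover of the unique seam element of $(\M_0)_{[d]}$ whenever that element --- which is the $\le$-largest element of $(\M_0)_{[d]}$ --- lies in $A_0$. The key point is that this seam cover is the $y$-minimal element of $(\M_1)_{[d+1]}$: hence, under the order used when $a_1+b_0>b_1$ it is absorbed into $\uSdw_{\M_1}A_1$ exactly when $A_1=(\M_1)_{[d]}$, whereas under the order used when $a_1+b_0\le b_1$ it is absorbed whenever $A_1\ne\varnothing$. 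In both cases this makes the ``fill $\M_1$ first'' distribution an initial segment whose $\M$-shadow is again of ``fill first'' type at level $d+1$, giving condition (2); and it makes the seam contribution to $|\uSdw_\M A|$ coincide with the extra $+1$ that \Cref{lemma: box segments} charges to an initial segment of $\M_1$ that is about to cross to the next $y$-level, so that the ``fill $\M_1$ first'' distribution still minimises $|\uSdw_\M A|$, giving condition (1). Carrying this out amounts to using \Cref{lemma: box segments} to compare, rank by rank, the incremental shadow cost of the $\M_0$-side against that of the $\M_1$-side, plus separate treatment of the endpoint $d=a_1-1$ (where $(\M_1)_{[d]}=\varnothing$) and of those $d$ large enough that a box level of $\M_0$ or $\M_1$ has already begun to shrink.

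The main obstacle is exactly this seam-range analysis, and within it condition (1): one must show that no redistribution of $A$ between the two boxes beats the ``fill $\M_1$ first'' distribution once the extra seam edge is accounted for, and the bookkeeping of where that single element $S(A_0)$ gets absorbed is what forces the lexicographic order in one regime and the twist order in the other. Everything else --- the sub-seam and super-seam ranges, and condition (2) throughout --- I expect to be routine given \Cref{lemma: box segments}, \Cref{lemma:initial segments general}, and Clements--Lindstr\"om \cite{CL}.
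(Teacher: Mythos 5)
Your decomposition $\M=\M_0\sqcup\M_1$ with the $b_0$ seam edges $x^iy^{a_1-1}\lessdot x^iy^{a_1}$ is sound, and your observation about where the seam cover $x^{d-a_1+1}y^{a_1}$ is absorbed into $\uSdw_{\M_1}A_1$ --- only when $A_1=(\M_1)_{[d]}$ under lex, but for every nonempty initial $A_1$ under the twist order --- is exactly the right explanation for why the two regimes need different total orders. Your treatment of the range $d\geq a_1+b_0-1$ coincides with the paper's (disjoint levels with disjoint shadows, redistribution controlled by \Cref{lemma:initial segments general}), and your condition~(2) argument is essentially the one in the text.

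The genuine gap is that the central step is named but not carried out: shadow-minimality of initial segments (condition~(1)) at the seam levels $a_1-1\leq d\leq a_1+b_0-2$. You describe what must be shown --- that no redistribution of $A$ between $\M_0$ and $\M_1$ beats the fill-$\M_1$-first distribution once the single seam edge is charged --- declare it the main obstacle, and stop; but this comparison is the crux of the claim in that range, not a routine application of \Cref{lemma: box segments}, and as written the proposal does not prove it. The paper disposes of this range by a different and much cheaper observation that your seam bookkeeping is, in effect, trying to re-derive by hand: for $d\leq a_1+b_0-2$ the elements of rank at most $d+1$ in the heart are exactly those of the $a_0\times b_1$ box (the missing corner $\{i\geq b_0,\ j\geq a_1\}$ only appears from rank $a_1+b_0$ onward), the covering relations and the lex order with $y>x$ agree under this identification, and since $a_0\geq b_1$ that box is Macaulay for this order by \cite{CL}, so condition~(1) at these levels is immediate. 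Even granting that identification, the twist regime still requires an extra step at seam levels with $d\geq a_1$, since the box identification certifies minimality only for lex initial segments and the twist initial segment differs from the lex one there; closing that comparison (via \Cref{lemma: box segments} and your seam-absorption observation) must be written out rather than asserted. Until the seam-range comparison is actually proved, the proposal establishes the claim only outside the range $a_1-1\leq d\leq a_1+b_0-2$.
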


Note that the largest rank of any monomial $x^iy^j \in \mathcal{M}$ in any of the above cases for poset $\mathcal{M}$ is $a_0+a_1-2$, since otherwise we cannot have that both $i \leq a_0-1$ and $j \leq a_1-1$, or equivalently, that $j \leq b_1-1$ and $i \leq b_0-1$. Thus, we must prove for all ranks $d \in [0,a_0+a_1-2]$ the following two statements, where $A \subset \mathcal{M}_{[d]}$:
\begin{enumerate}
    \item $|\nabla_{\mathcal{M}}\Seg_d|A|| \leq |\nabla_{\mathcal{M}}(A)|$
    \item The upper shadow of every initial segment is an initial segment
\end{enumerate}
First we note that requirement (2) holds for the lex order case by \cite[Proposition 2.5]{MP}. Now we show requirement (2) holds for the twist order case; assume that $a_1+b_0 \leq b_1$. It is easy to see that the order of the monomials in ranks $d \in [0,a_1]$ is the same as when using lex order with $y >_{\lex} x$. Thus, for $A \subset \mathcal{M}_{[d]}$, $A$ is an initial segment w.r.t. the lex order if and only if it is an initial segment w.r.t. the twist order. Since the twist order does not affect the content of upper shadows, we can use the above proof for lex order in this case as well, for $d\in [0,a_1-1]$. Now suppose $d\in [a_1,a_0+a_1-2]$, and let $A \subset \mathcal{M}_{[d]}$ an initial segment. Recall that $\M_0 = \{x^iy^j \vert j < a_1\}$  and $\M_1 = \{x^iy^j \vert j \geq a_1\}$. Let $A \cap \mathcal{M}_0 = A_0$ and $A \cap \mathcal{M}_1 = A_1$. By definition of the twist order, either $A_0 = \varnothing$ or $A_1 = (\mathcal{M}_{1})_{[d]}$.
If $A_0 = \varnothing$, then $A \subset \mathcal{M}_1$, and thus $\nabla_{\mathcal{M}}A \subset \mathcal{M}_1$. Since within $\mathcal{M}_1$, we compare nodes using lex order, the earlier proof holds. So, say that $A_0 \neq \varnothing$, and thus $A_1 =( \mathcal{M}_{1})_{[d]}$. Then, $\nabla_{\mathcal{M}}A= \nabla_{\mathcal{M}_0}A_0 \cup \nabla_{\mathcal{M}_1}A_1 = \nabla_{\mathcal{M}_0}A_0 \cup (\mathcal{M}_1)_{[d+1]}$. By definition of the twist order, this is an initial segment if and only if $\nabla_{\mathcal{M}_0}A_0$ is an initial segment in $\mathcal{M}_0$; we have this statement since the earlier proof holds within $\mathcal{M}_0$, and $A_0$ is an initial segment within $\mathcal{M}_0$.

Let us move on to showing the first requirement for the lex order case; say $a_1+b_0 > b_1$. Then, if $A \subset \mathcal{M}_{[d]}$ for $d \in [0,a_1+b_0-2]$, we have that the poset consisting of the monomials of rank $\leq d$ is isomorphic to that same poset derived from a Clements-Lindstr\"om poset with dimensions $a_0 \times b_1$, which is known to be Macaulay with respect to the lex order $y > x$. So, the requirement holds for this range. Now suppose $d \in [a_0+b_1-1,a_0+a_1-2]$, and let $A \subset \mathcal{M}_{[d]}$. Since we no longer care about the covering relations at ranks below $d$, we can think of $\mathcal{M}$ as the disjoint union of two Clements-Lindstr\"om posets, called $\P$ and $\Q$ respectively, of dimensions $a_0 \times a_1$ and $b_0 \times b_1$. Now,  by our assumption $a_1+b_0 > b_1$ it follows that $a_1+b_0-2 \geq b_1-1$, and since $d > a_1+b_0-2$ excedes the largest degree where the two Clements-Lindstr\"om poset factors in \Cref{thm: heart} intersect, we are in the situation of \Cref{fig: LR gen}. Moreover, we can further assume the second Clements-Lindstr\"om poset has dimensions $b_1 \times b_0$, since if we are concerned with only the subposet consisting of nodes above rank $d$, these posets are isomorphic. 
 (This is convenient for the purpose of the next paragraph, where we are working with the twist order, so that we can conclude in that case using the same logic.) 
This poset is Macaulay for order $y >_{\lex} x$, since $b_1 > b_0$. So, if $A\subset \Q$, we have that requirement (1) holds. Similarly, since $a_0 < a_1$, $\P$ is Macaulay with respect to the chosen order, and thus if $A \subset \P$, requirement (1) also holds. So, instead assume that $A = A_0 \cup A_1$, where $A_0 \subset \P$ and $A_1 \subset \Q$, and neither are empty. By the above logic, we have that 
\begin{eqnarray*}
    |\nabla_{\mathcal{M}}\Seg_d^\P|A_0|| &\leq& |\nabla_{\mathcal{M}}A_0|\\ |\nabla_{\mathcal{M}}\Seg_d^\Q|A_1|| &\leq& |\nabla_{\mathcal{M}}A_1| \text{ and}\\
    \nabla_{\mathcal{M}}A_0 \cap \nabla_{\mathcal{M}}A_1 &=& \varnothing.
\end{eqnarray*}
So, by \Cref{lemma:initial segments general}, 
\[|\nabla_{\mathcal{M}}A| = |\nabla_{\mathcal{M}}A_0| + |\nabla_{\mathcal{M}}A_1| \geq |\nabla_{\mathcal{M}}\Seg_d^\P|A_0|| + |\nabla_{\mathcal{M}}\Seg_d^\Q|A_1|| \geq |\nabla_{\mathcal{M}}\Seg_d|A||.
\]

Now we proceed to the cases in which the twist order is needed; from this point on, we are using the twist order with $y >_{\lex} x$. Again, we can think of poset $\mathcal{M}$ now instead as the disjoint union of posets $\P$ and $\Q$, respectively the Clements-Lindstr\"om posets with dimensions $a_0 \times a_1$ and $b_0 \times b_1$, where the first poset has the lexicographic order with $y >_{\lex} x$ and the second has the lexicographic order with $x >_{\lex} y$, by definition of the twist order.  In order to think of them together as one poset with one order, we can flip the second by interchanging $x$ and $y$, so that the flipped poset  has dimensions $b_1 \times b_0$, with order $y>_{\lex}x$. Now, exactly as above, we get that by \Cref{lemma:initial segments general} that the second requirement holds. 

Now it remains to show the backward direction along the lines of \Cref{ex: heart}. For this it is enough to show that if $a_0 > b_0$, $b_1 > a_1$, and $a_0 > b_1$, but $a_0 + a_1 < b_0 + b_1$, then the resulting poset is not Macaulay. Note that we do not assume that $a_0 \geq b_1$ since in the case that $a_0 = b_1$ but $a_0 + a_1 < b_0 + b_1$ does not hold, we may simply swap the variables $x$ and $y$ and get a poset in which $a_0 + a_1 < b_0 + b_1$ does hold. Assume towards a contradiction that $\mathcal{M}$ is such a poset and it is Macaulay. Then, $|\nabla_{\mathcal{M}}\{y^{b_1-1}\}| = 1$, since $(x)(y^{b_1-1}) \neq 0$, but $(y)(y^{b_1-1})=y^{b_1}=0$. The other elements in rank $b_1-1$, by assumption, all have upper shadow of size 2. So, $y^{b_1-1}$ must be the greatest in its rank according to the total order. By contrast $x^{a_0-1}$, which by similar reasoning also has upper shadow size 1, does not need to be the greatest in its rank, since there are other vertices in that rank which also have upper shadow size 1, such as $x^{a_0-b_1}y^{b_1-1}$. However, we note that $x^{a_0-1}y^{a_1-1}$ has empty upper shadow, while all other in its rank do not, so it must be the greatest in its rank according to the total order. But, it is not a multiple of $y^{b_1-1}$, so starting at rank $b_1-1$, if we repeatedly take the upper shadow of the initial segment of size 1, we see that at rank $a_0+a_1-2$, the resulting subset of $\mathcal{M}_{[a_0+a_1-2]}$ is not an initial segment, since it cannot contain $x^{a_0-1}y^{a_1-1}$. Thus, the requirement that upper shadows of initial segments be initial segments fails. 
\end{proof} 

\begin{proof}[Proof of \Cref{introthm D} (3)] \label{proof D3}
    It follows by \Cref{ex: FP intersection} that for $c_i=\min\{a_i,b_i\}$ and 
    \[
    C=\frac{K[x,y]}{(x^{a_0}, y^{a_1})+(x^{b_0},y^{b_1})}=\frac{K[x,y]}{(x^{c_0},y^{c_1})}
    \]
    the ring in the theorem decomposes as 
   \[
     \frac{K[x,y]}{(x^{a_0}, y^{a_1})\cap(x^{b_0},y^{b_1})} =\frac{K[x,y]}{(x^{a_0}, y^{a_1})}\times_C \frac{K[x,y]}{(x^{b_0},y^{b_1})}.
    \]
    The desired conclusion follows from \Cref{prop: FP} and \Cref{thm: heart}.
\end{proof}

\section{Further conjectures, examples and counterexamples}\label{s: examples} 

In this last section we consider Cartesian products of posets
\begin{equation*}
\P_1 \times \P_2 = \{(a,b) \mid a\in \P_1, b\in \P_2\} 
\end{equation*}
with partial order $(a, b) \leq (a', b')$ if and only if  $a \leq a'$ and $b \leq b'$. An exhaustive search using \cite{BKLS} or \cite{java} yields the following.

\begin{prop}\label{prop: smallest Cartesian product}
\begin{enumerate}
\item The smallest poset that is not Macaulay but decomposes as a nontrivial Cartesian products of Macaulay posets is a Cartesian product of a  path poset of length 1 cross with a Y poset. It has 8 elements. See \Cref{fig:3}.
\item    Let $R = \frac{K[x]}{(x^{2})}$ be a Macaulay ring with poset $\P_{1}$, $S = \frac{K[y, z]}{(y^{2} - z^{2}, y^{3}, z^{3})}$ be a Macaulay ring with poset $\P_{2}$. The poset $\P_{1} \times \P_{2}$ is the smallest (in terms of number of elements) poset that is a nontrivial Cartesian product of two Macaulay posets that come from rings. It has 10 elements and correponds to the ring $R\otimes_KS=\frac{K[x, y,z]}{(x^2, y^{2} - z^{2}, y^{3}, z^{3})}$. See \Cref{fig:4}.
\end{enumerate}
\end{prop}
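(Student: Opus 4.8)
The statement is a finite classification, and the plan is to make that classification explicit. First I would record that for a \emph{finite} ranked poset, being Macaulay is a decidable property: there are only finitely many total orders refining the rank function, and for each one the two defining conditions need to be checked only on the (finitely many) subsets of each level, so one can also just build a candidate order greedily level by level. This is what the programs \cite{BKLS} and \cite{java} automate; the substance of the proof is the enumeration that feeds them.

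For part (1): since $5$ and $7$ are prime and the smallest nontrivial Cartesian product has $2\cdot 2=4$ elements, the relevant product sizes are $4=2\cdot2$, $6=2\cdot3$, and $8=2\cdot4=2\cdot2\cdot2$. I would first list the ranked posets with a unique minimum and at most $4$ elements; up to isomorphism these are the three chains, the poset with level sizes $(1,2)$, the two posets with level sizes $(1,2,1)$ (the $2\times2$ box, and a $3$-chain with an extra rank-$1$ element covering the bottom), the ``Y'' poset with level sizes $(1,1,2)$, and the $3$-claw, each of which is Macaulay by inspection. Forming all their products of total size $\le 8$ gives: the boxes $2\times2$, $2\times3$, $2\times4$, $2\times2\times2$ (Macaulay by \cite{CL}); the products of a length-$1$ path with the $(1,2)$-poset, with the $3$-claw, and with the $3$-chain-plus-extra-element (each a routine finite check yielding ``Macaulay''); and the product of a length-$1$ path with the ``Y'' poset. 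For this last one I would argue as in \Cref{ex: heart}: with $\{0<1\}$ the length-$1$ path and $Y=\{v_0<v_1<v_2,\ v_0<v_1<v_2'\}$, the rank-$1$ element $(1,v_0)$ has upper shadow of size $1$ while $(0,v_1)$ has upper shadow of size $3$, so any valid total order must put $(1,v_0)$ last in rank $1$, which by the second Macaulay condition forces $(1,v_1)$ last in rank $2$; but $(1,v_1)$ has upper shadow of size $2$ while $(0,v_2)$ has upper shadow of size $1$, violating the first condition. Hence this size-$8$ poset is the unique non-Macaulay member, which proves (1).

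For part (2) I would rerun the enumeration with both factors restricted to monomial posets of quotient rings; the new input is deciding which small posets arise this way. The chains, the $(1,2)$-poset $K[y,z]/(y,z)^2$, the $2\times2$ box $K[x,y]/(x^2,y^2)$, the $3$-chain-plus-extra-element $K[y,z]/(y^2,yz,z^3)$, and the $3$-claw $K[y,z,w]/(y,z,w)^2$ are all monomial posets, but the ``Y'' poset is not: a Hilbert function $(1,1,2,\dots)$ is impossible because $\dim_K S_1\le 1$ forces $\dim_K S_d\le 1$ for all $d$. So every product of monomial posets of size $\le 8$ is one of the Macaulay posets from part (1), and the only remaining size below $10$ is $9=3\cdot3$, where the three products (the $3\times3$ box and the two products involving the monomial poset of $K[y,z]/(y,z)^2$) are each checked to be Macaulay. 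For size $10=2\cdot5$ I would enumerate the finitely many $5$-element monomial posets ($K[x]/(x^5)$, the $4$-claw, the posets with level sizes $(1,3,1)$, those with level sizes $(1,2,2)$, and the one with level sizes $(1,2,1,1)$) and cross each with $K[x]/(x^2)$. Exactly one of the results is not Macaulay, namely $K[x]/(x^2)$ crossed with the monomial poset $\M_S$ of $S=K[y,z]/(y^2-z^2,y^3,z^3)$: here $\M_S$ has level sizes $(1,2,2)$ with \emph{all four} covering relations between levels $1$ and $2$ present, because $y^2=z^2$ in $S$ makes the standard monomial $z^2$ divisible by $y$. The failure is again of the \Cref{ex: heart} type: in rank $1$ of the product only $x$ has upper shadow of size $2$ (the other two have size $3$), so $x$ must be last in rank $1$, which forces $xy$ and $xz$ to be the two largest in rank $2$; but $yz$ and $z^2$ have upper shadow of size $1$ while $xy$ has size $2$, violating the first Macaulay condition. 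Finally, since the monomial poset of a tensor product over $K$ is the Cartesian product of the two monomial posets (cf.\ \cite[Theorem 4.1]{MP} and \Cref{prop: FP}), $\M_{R\otimes_K S}=\M_R\times\M_S=\P_1\times\P_2$, with $R\otimes_K S=K[x,y,z]/(x^2,y^2-z^2,y^3,z^3)$, which establishes (2).

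The hardest part is the completeness of the enumeration rather than any single inequality. One must be sure to list \emph{every} ranked poset (for (1)) and \emph{every} monomial poset (for (2)) of the relevant small sizes up to isomorphism --- especially the several monomial posets with level sizes $(1,3,1)$ and $(1,2,2)$, where small differences in the covering structure between the bottom two nontrivial levels are exactly what decide whether the product with a length-$1$ path is Macaulay. One must also correctly compute $\M_S$ for the non-monomial ring $S=K[y,z]/(y^2-z^2,y^3,z^3)$ via the framework of \cite{Kuz}: its monomials are the cosets of $1,y,z,yz$ together with $y^2=z^2$, and it is precisely this identification that makes the poset non-realizable by a monomial ideal and that produces the obstruction above. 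Once the lists are fixed, the remaining work is the routine finite Macaulay checks carried out by \cite{BKLS} and \cite{java}, together with the two shadow-forcing arguments spelled out here.
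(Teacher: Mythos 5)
Your proposal is correct and is, at bottom, the same approach as the paper's: the paper's entire proof is the single sentence ``an exhaustive search using \cite{BKLS} or \cite{java} yields the following,'' so the enumeration scheme and the two obstruction arguments you supply are exactly the content the paper delegates to software. Those arguments check out. For part (1) your list of ranked posets with at most $4$ elements and a unique minimum is complete, and the shadow-forcing chain ($(1,v_0)$ must head rank $1$, hence $(1,v_1)$ must head rank $2$, yet $(0,v_2)$ has strictly smaller upper shadow) is the correct obstruction. For part (2) your computation of $\M_S$ is right: the standard monomials are $1,y,z,yz$ and the single coset $\overline{y^2}=\overline{z^2}$, and since $\overline{y^2}=\overline{y}\cdot\overline{y}=\overline{z}\cdot\overline{z}$ both rank-$1$ elements lie under both rank-$2$ elements, which is what produces the failure at rank $2$ of $\P_1\times\P_2$. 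Two small points. First, your reason that the Y poset is not a monomial poset conflates the number of degree-$d$ monomial cosets with $\dim_K S_d$ (these can differ for non-monomial ideals); the clean argument is that a single monomial coset $\overline{x}$ in degree $1$ forces every degree-$d$ monomial coset to equal $\overline{x}^{\,d}$, so no level after a singleton level can grow --- same conclusion. Second, the completeness of the list of $5$-element monomial posets up to isomorphism (there are several non-isomorphic covering structures within the level-size types $(1,3,1)$ and $(1,2,2)$, some realizable only via non-monomial ideals) is the one place you, like the paper, ultimately lean on the finite computer check; that is consistent with the paper's own standard of proof, but it is where your write-up is least self-contained.
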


\begin{figure}[ht!]
\centering 
\includegraphics{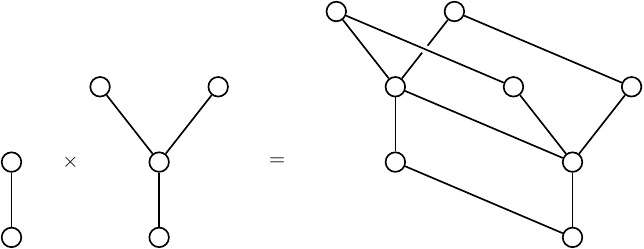}
\caption{The posets for part (1) of \Cref{prop: smallest Cartesian product}}
\label{fig:3}
\end{figure}
\begin{figure}
\centering 
\includegraphics{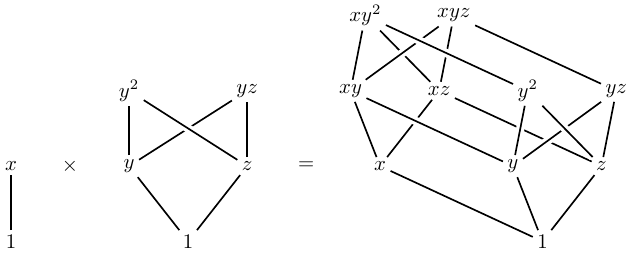}
\caption{The posets for part (2) of \Cref{prop: smallest Cartesian product}}
\label{fig:4} 
\end{figure}

In \cite{MP} it is shown that the tensor product of a Macaulay ring with respect to the lexicographic order and a polynomial ring in a single variable is a Macaulay ring. Via the correspondence between Macaulay rings and posets, this says that the cartesian product of a Macaulay poset that is the monomial poset of a lex-Macaulay ring and an infinite path poset is Macaulay. This prompted the following conjecture, which aims to replace the infinite path with a finite one, as represented by the monomial poset of the ring  $K[x]/(x^n)$.

\begin{conj}[{\cite[Conjecture 6.6.]{Kuz}}]\label{conjecture}
If $S$ is a Macaulay ring, $x$ is a variable not appearing in $S$ and $n>0$ then  $S\otimes_K K[x]/(x^n)$ is a Macaulay ring.
\end{conj}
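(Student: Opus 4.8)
The plan is to restate \Cref{conjecture} combinatorially and then mount a Clements--Lindström style compression argument, pinpointing the one estimate that is genuinely hard. It is standard (and is used in the discussion preceding \Cref{conjecture}) that the monomial poset of $S\otimes_K K[x]/(x^n)$ is $\M_S\times Q_n$, where $Q_n$ denotes the path poset $0<1<\cdots<n-1$, the monomial poset of $K[x]/(x^n)$. So it suffices to prove: if $\P$ is the monomial poset of a Macaulay ring, carrying a Macaulay total order $\mathcal{O}$, then $\P\times Q_n$ is Macaulay. I would induct on $n$; the case $n=1$ is trivial, and taking $K[x]$ in place of $K[x]/(x^n)$ recovers the theorem of \cite{MP}, so the whole difficulty lies in truncating the path to a finite one.

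For the total order I would take the order on $\P\times Q_n$ that refines $\mathcal{O}$ on each slice $\P\times\{j\}$ and, among elements $(p,j)$ of a fixed rank $d$, compares first by the path coordinate $j$ and breaks ties by $\mathcal{O}$; this is the evident analogue of the Clements--Lindström order on a box poset and of the order used in \cite[Proposition 2.5]{MP}. That upper shadows of initial segments of this order are again initial segments should follow, as in the box case, by combining \cite[Proposition 2.5]{MP} with an induction on $n$ that peels off the top slice $\P\times\{n-1\}\cong\P$.

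The core is the compression. Writing $A\subseteq(\P\times Q_n)_{[d]}$ as $A=\bigsqcup_{j=0}^{n-1}A_j\times\{j\}$ with $A_j\subseteq\P_{[d-j]}$, the covering relations give
\[
\uSdw_{\P\times Q_n}(A)=\bigsqcup_{j=0}^{n-1}\bigl(\uSdw_\P(A_j)\cup A_{j-1}\bigr)\times\{j\},\qquad A_{-1}=\varnothing,
\]
so $\bigl|\uSdw_{\P\times Q_n}(A)\bigr|=\sum_{j=0}^{n-1}\bigl|\uSdw_\P(A_j)\cup A_{j-1}\bigr|$; crucially the fiber $A_{n-1}$ makes no upward contribution, since the path has no level $n$ — the feature absent in the infinite-path case. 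The first move is to replace each $A_j$ by the $\mathcal{O}$-initial segment $C_j$ of $\P_{[d-j]}$ of the same cardinality; this does not increase the total, because $\P$ Macaulay gives that $\uSdw_\P(C_j)$ is an initial segment with $|\uSdw_\P(C_j)|\le|\uSdw_\P(A_j)|$, and since $C_{j-1}$ is an initial segment the two are nested, whence $|\uSdw_\P(C_j)\cup C_{j-1}|=\max\{|\uSdw_\P(C_j)|,|A_{j-1}|\}\le\max\{|\uSdw_\P(A_j)|,|A_{j-1}|\}\le|\uSdw_\P(A_j)\cup A_{j-1}|$. The second move is to redistribute the sizes $|A_0|,|A_1|,\dots$ across the path coordinate so as to reach the profile of a genuine initial segment of $\P\times Q_n$, pushing mass toward low path coordinates and, near the top of $\P\times Q_n$, toward coordinate $n-1$ whose upward shadow is ``free''.

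The main obstacle is exactly this second move: comparing $|\uSdw_{\P\times Q_n}|$ of the compressed configuration with that of the initial segment of the same size. For $\P$ a product of paths this is the classical Clements--Lindström inequality, whose proof leans on self-duality of box posets and on an explicit cascade (Kruskal--Katona type) formula for shadow sizes; for an arbitrary Macaulay base $\P$ neither tool is available, and the truncation means the extremal configuration is no longer simply ``an initial segment of $\P\times\N$ met with $Q_n$''. I expect the productive route is to prove a sharper statement under the extra hypothesis that $\P$ is \emph{additive} in the sense of \Cref{thm: equiv union wedge diamond} — additivity is what governs how new-shadow contributions of parallel copies interact — and then to verify that monomial posets of lex-Macaulay rings (the realistic case of the conjecture) are additive. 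A concrete first target is $n=2$, i.e.\ $S\otimes_K K[x]/(x^2)$: there a configuration is a pair $(A_0,A_1)$ with $A_1\subseteq\P_{[d-1]}$ contributing nothing upward, and even the resulting comparison of $|\uSdw_\P(A_0)\cup A_1|+|\uSdw_\P(A_1)|$ with the extremal value already seems to demand an additivity-type input about $\uSdw_{\text{new}}$.
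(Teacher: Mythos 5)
The statement you are trying to prove is false, and the paper's treatment of it is a refutation, not a proof: immediately after stating \Cref{conjecture} the paper exhibits a counterexample found by computer search. Take $S=K[y,z]/(y^3,y^2z,yz^2,z^3)$, whose monomial poset $\M$ has levels $\{1\},\{y,z\},\{y^2,yz,z^2\}$ and is Macaulay for the lex order, and tensor with $K[x]/(x^2)$. In $\M\times Q_2$ the rank-one elements $(y,0)$ and $(z,0)$ each have upper shadow of size $3$ while $(1,1)$ has upper shadow of size $2$, so any Macaulay order must put $\Seg_1 1=\{(1,1)\}$ and hence $\Seg_2 2=\uSdw\{(1,1)\}=\{(y,1),(z,1)\}$, which has upper shadow of size $3$; but $\{(y^2,0),(yz,0)\}$ has upper shadow of size $2$, violating the minimality of initial segments at rank $2$. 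No total order can resolve this, because the minimal-shadow elements sit at the \emph{top} path coordinate in rank $1$ and at the \emph{bottom} path coordinate in rank $2$.

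This pinpoints exactly where your plan breaks. Your proposed order ``compare first by the path coordinate $j$'' cannot be made to work in either direction, and the step you yourself flag as the main obstacle --- redistributing the sizes $|A_j|$ toward a single extremal profile --- genuinely fails: once the path is truncated, the configurations with small shadow near the top of the poset (mass pushed to coordinate $n-1$, whose upward contribution is free) are incompatible with the configurations forced at low ranks by condition (2). Your instinct that an additional hypothesis is needed is the right one: the paper replaces \Cref{conjecture} by \Cref{conjecture2}, which requires $n$ to exceed the top degree of $S$ (the counterexample has top degree $2$ and $n=2$), precisely so that the truncation never interferes with the shadows that condition (2) propagates upward. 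If you want a positive result along the lines you sketch, you should aim at \Cref{conjecture2}, which remains open, rather than at \Cref{conjecture}.
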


Using our computational tools, we found  counterexamples to the above conjecture.
\begin{figure}[ht!]
\centering
\includegraphics{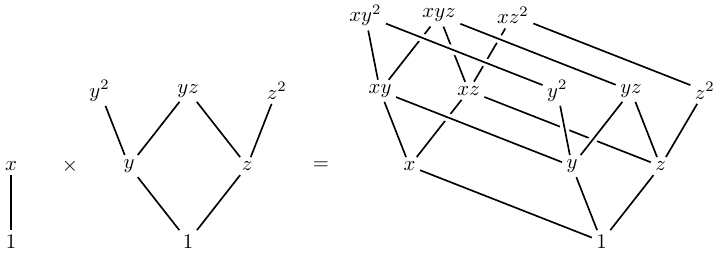}
\caption{A counterexample to {\cite[Conjecture 6.6.]{Kuz}}.}
\label{fig: tensor product counterexample}
\end{figure}

\begin{ex}
\Cref{conjecture} is false. Let $S = \frac{K[y, z]}{(y^{3}, y^{2} z, yz^{2}, z^{3})}$ with monomial poset $\M$, and let $T = \frac{K[z]}{(x^{2})}$ with monomial poset $\P$. By the Macaulay Correspondence Theorem \cite[Theorem 2.6.3]{Kuz}, since $\M \times\P$ is not Macaulay, then $S \otimes_{K} T$ is not Macaulay. See \Cref{fig: tensor product counterexample} for an illustration.
\end{ex}

\noindent However, we found no counterexamples to the following adjusted version of \Cref{conjecture}, which we propose for future investigation.

\begin{conj}\label{conjecture2}
If $S$ is a Macaulay ring, $x$ is a variable not appearing in $S$ and $n$ is strictly larger than the largest degree of any element of $S$. Then  $S\otimes_K K[x]/(x^n)$ is a Macaulay ring.
\end{conj}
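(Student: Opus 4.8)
The final statement is a conjecture, so what follows is a strategy rather than a complete proof. The plan is to pass to the poset world and then imitate, with modifications forced by the finiteness of the path, the compression argument behind the infinite‑path case \cite[Theorem 4.1]{MP}. By the Macaulay Correspondence \cite[Theorem 2.6.3]{Kuz} it is equivalent to show that $\M\times\P$ is Macaulay, where $\M=\M_S$ is the (finite) monomial poset of $S$, of rank $r$ equal to the top degree of $S$, and $\P$ is the monomial poset of $K[x]/(x^n)$, i.e. the path poset $0<1<\cdots<n-1$. The hypothesis that $n$ exceeds the top degree of $S$ says precisely that $\P$ has length $n-1\ge r$. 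The candidate total order on $\M\times\P$ is the one induced by a fixed Macaulay order on $\M$ together with the path order, treating the new variable $x$ as \emph{last} (when $S$ is lex‑Macaulay this is exactly the lexicographic order on $S\otimes_K K[x]/(x^n)$ used in \cite{MP}).

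First I would carry out the \emph{$\M$‑compression} step. Fix a rank $d$ and identify $(\M\times\P)_{[d]}$ with the disjoint union of the $x$‑fibers $\M_{[d-i]}\times\{x^i\}$ over the admissible $i$. A direct computation of covers shows that the $j$‑th fiber of $\uSdw_{\M\times\P}(A)$ equals the union, taken inside $\M_{[d+1-j]}$, of the $(j-1)$‑st fiber of $A$ and $\uSdw_\M$ of the $j$‑th fiber of $A$. Replacing each fiber of $A$ by the initial segment of $\M$ of the same size and applying the two Macaulay axioms for $\M$ (initial segments have the smallest shadow, and the shadow of an initial segment is initial) shows that each summand $\lvert\, \text{(fiber }j-1) \cup \uSdw_\M(\text{fiber }j)\,\rvert$ does not increase, since two $\M$‑initial segments of the same level are nested. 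Hence $\M$‑compression does not increase $\lvert\uSdw_{\M\times\P}(A)\rvert$; notably, this step does not see whether the path is finite.

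Next comes the \emph{path compression and verification} step: among $\M$‑compressed sets of fixed cardinality one must show that the lexicographic initial segment minimizes the upper shadow, via exchange moves transferring elements between consecutive $x$‑fibers, and that its upper shadow is again a lexicographic initial segment. For ranks $d\le n-2$ this is literally the infinite‑path computation, because there $(\M\times\P)_{[d]}$, $(\M\times\P)_{[d+1]}$, and the covering relations between them coincide with those of $\M\times(\text{infinite path})$. The genuine work is concentrated at ranks $d\ge n-1$: there the top fiber $\M_{[d-n+1]}\times\{x^{n-1}\}$ has no upward cover in the $x$‑direction (since $x^n=0$) and is therefore "artificially cheap," so the minimizing profile must be tracked by hand, using that for $d\ge n-1$ the level $(\M\times\P)_{[d]}$ is a stack of the consecutive levels $\M_{[d-n+1]},\dots,\M_{[\min(r,d)]}$ of $\M$ glued along the path and that the candidate order restricts on this stack to an order compatible with the Macaulay data of $\M$.

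The main obstacle is exactly this last step at ranks $\ge n-1$, and it is where the hypothesis $n>r$ is used. One must rule out that the cheapness of the $x^{n-1}$‑fiber lets a non‑initial set beat the lexicographic initial segment, or forces into it an element incompatible with the top levels of $\M$. That this really fails when $n\le r$ is the content of the example in the excerpt, $S=K[y,z]/(y,z)^3$ with $n=2$: the element $(\ell,x^{n-1})$ (with $\ell$ the least element of $\M$) then lies at rank $n-1\le r-1$ and has upper shadow of size $\lvert\M_{[1]}\rvert$ instead of $\lvert\M_{[1]}\rvert+1$, strictly smaller than that of every other element of its rank, so it must be maximal in its rank; but then the minimal‑upper‑shadow elements of the next rank, coming from the top level of $\M$, cannot be selected first. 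The inequality $n-1\ge r$ staggers the "path ceiling" $x^{n-1}$ above the "$\M$‑ceiling" level $r$, placing $(\ell,x^{n-1})$ at rank $\ge r$, above every rank at which the $\M$‑ceiling imposes forced choices, and the conflict disappears; turning this dichotomy into quantitative shadow inequalities valid at all ranks $\ge n-1$ is what a complete proof must supply. One caveat: if $S$ is Macaulay but not lex‑Macaulay, even the infinite‑path statement is not covered by \cite{MP}, and one would first establish it by running the two compression steps above with $n=\infty$ (where the path step is easier, as no fiber is cheap) and then deduce the finite case for $n>r$ as described.
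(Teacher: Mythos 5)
The statement you are addressing is an open conjecture: the paper supplies no proof, only computational evidence (no counterexamples found) and the observation that the unadjusted version, \cite[Conjecture 6.6]{Kuz}, fails when $n$ does not exceed the top degree of $S$. Your submission is candid that it is a strategy rather than a proof, and as a strategy it is sensible: reducing to $\M\times\P$ via the Macaulay correspondence, the fiber-wise description $\bigl(\uSdw_{\M\times\P}(A)\bigr)_j=A_{j-1}\cup\uSdw_\M(A_j)$, and the $\M$-compression step (which goes through because two initial segments of the same level of $\M$ are nested, so each fiber of the shadow has size $\max\{|A_{j-1}|,|\uSdw_\M(A_j)|\}$ after compression) are all correct and are the natural first moves. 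Your diagnosis of why the paper's counterexample $S=K[y,z]/(y,z)^3$, $n=2$ obstructs the unadjusted conjecture --- the element $(1,x^{n-1})$ sits at rank $n-1\le r-1$ with an artificially small upper shadow and is forced to be maximal in its rank, conflicting with the forced choices coming from the top levels of $\M$ --- is also consistent with the figure the paper gives.

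However, there are two genuine gaps, and together they leave the entire content of the conjecture unproved. First, the verification at ranks $d\ge n-1$, where the fiber $\M_{[d-n+1]}\times\{x^{n-1}\}$ has no upward cover in the $x$-direction, is exactly where the hypothesis $n>r$ must enter, and you do not produce the "quantitative shadow inequalities" you correctly identify as necessary; everything at ranks $d\le n-2$ is already covered by the infinite-path case, so no new ground is gained. Second, your final caveat is more serious than you present it: the conjecture is stated for an arbitrary Macaulay ring $S$, whereas \cite[Theorem 4.1]{MP} concerns lex ideals, so for a Macaulay order on $\M$ that is not lexicographic even the path-compression/exchange step at low ranks and the verification that shadows of initial segments in your proposed order on $\M\times\P$ are again initial segments are not established --- indeed Section 6 of the paper (e.g.\ \Cref{prop: smallest Cartesian product}) shows that Cartesian products of Macaulay posets can fail to be Macaulay in ways invisible to the lex-specific arguments of \cite{MP}. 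In short, your outline matches the paper's framing of the problem but does not resolve it; the statement remains a conjecture.
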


\bigskip

\bigskip

\end{document}